\documentclass[12pt]{amsart}
\usepackage{mathrsfs}
\usepackage{amssymb}
\usepackage{cite}
\usepackage[all]{xy}
\usepackage{graphicx}
\usepackage{textcomp}
\usepackage{charter}
\usepackage[vflt]{floatflt}
\usepackage{mathtools}
\usepackage{enumerate}
\usepackage{wasysym}
\usepackage{rotating}
\usepackage{pdflscape}
\usepackage{comment}
\usepackage{fullpage}
\usepackage{bm}
\usepackage{url}
\usepackage[pdftex,colorlinks]{hyperref}
\usepackage{tikz,animate}
\numberwithin{equation}{section}
\hypersetup{
    colorlinks=true,
    citecolor=blue, 
     linkcolor=black,     
    urlcolor=black, 
}

\newtheorem{theorem}{Theorem}[section]
\newtheorem{lemma}[theorem]{Lemma}
\newtheorem{proposition}[theorem]{Proposition}

\newtheorem{setup}[theorem]{Setup}
\newtheorem{example}[theorem]{Example}
\newtheorem{definition}[theorem]{Definition}
\newtheorem{question}[theorem]{Question}
\newtheorem{corollary}[theorem]{Corollary}
\newtheorem{remark}[theorem]{Remark}

\newcommand{\fH}{{\mathfrak{H}}}

\newcommand{\fM}{{\mathfrak{M}}}

\newcommand{\fQ}{{\mathfrak{Q}}}

\newcommand{\bea}{\begin{eqnarray}}
\newcommand{\eea}{\end{eqnarray}}

\newcommand{\clb}{\mathcal{B}}

\newcommand{\clh}{\mathcal{H}}
\newcommand{\clk}{\mathcal{K}}

\newcommand{\clm}{\mathcal{M}}
\newcommand{\cln}{\mathcal{N}}

\newcommand{\clq}{\mathcal{Q}}

\newcommand{\cls}{\mathcal{S}}

\newcommand{\clw}{\mathcal{W}}

\newcommand{\D}{\mathbb{D}}
\newcommand{\N}{\mathbb{N}}
\newcommand{\C}{\mathbb{C}}
\newcommand{\Z}{\mathbb{Z}}
\newcommand{\z}{\bm{z}}

\newcommand{\be}{\begin{equation}}
\newcommand{\ee}{\end{equation}}
\newcommand{\ben}{\begin{eqnarray*}}
\newcommand{\een}{\end{eqnarray*}}

\newcommand{\bi}{\begin{itemize}}
\newcommand{\ei}{\end{itemize}}

\newcommand\la{{\langle }}
\newcommand\ra{{\rangle}}

\begin{document}

\title[Doubly commuting invariant and co-invariant subspaces]{Doubly commuting invariant and co-invariant subspaces}

\author[Barik]{Sibaprasad Barik}
\address{Statistics and Mathematics Unit, Indian Statistical Institute, 8th Mile, Mysore Road, Bengaluru, 560059, India}

\email{\href{mailto:barik\_if@isibang.ac.in}{barik\_if@isibang.ac.in}, sibaprasadbarik00@gmail.com}

\author[Jain]{Shubham Jain}
\address{Statistics and Mathematics Unit, Indian Statistical Institute, 8th Mile, Mysore Road, Bengaluru, 560059, India} 
\email{\href{mailto:shubhamjain_pd@isibang.ac.in}{shubhamjain\_pd@isibang.ac.in}, shubhamjain4343@gmail.com}

	\author[Panja]{Samir Panja}
\address{Statistics and Mathematics Unit, Indian Statistical Institute, 8th Mile, Mysore Road, Bengaluru, 560059, India} 
\email{\href{mailto:samirpanja\_pd@isibang.ac.in}{samirpanja\_pd@isibang.ac.in}, panjasamir2020@gmail.com}

\subjclass[2020]{47A15, 47A16, 47B32, 47A13, 47B91, 30H10}
	\keywords{Doubly commuting, invariant subspaces, submodules, quotient modules, Hardy space, weighted Bergman spaces, complete Nevanlinna-Pick spaces, de Branges–Rovnyak spaces}

\begin{abstract}

Given complex separable Hilbert spaces $\clh_i$ and bounded operators $T_i$ on $\clh_i$, for $i=1,\ldots,n$, we consider the tensor product space $\fH:=\clh_1\otimes\cdots\otimes\clh_n$ and a tuple $(\tilde{T}_1,\ldots,\tilde{T}_n)$ on $\fH$ where $$\tilde{T}_i=I_{\clh_1}\otimes\cdots\otimes I_{\clh_{i-1}}\otimes T_i\otimes I_{\clh_{i+1}}\otimes\cdots\otimes I_{\clh_n}.$$
A closed jointly $(\tilde{T}_1,\ldots,\tilde{T}_n)$-invariant subspace $\fM$ of $\fH$ is said to be doubly commuting if the tuple $(\tilde{T}_1|_{\fM},\ldots,\tilde{T}_n|_{\fM})$ is doubly commuting. In this paper, we show that if the hereditary polynomials in each $T_i$ are dense in $\clb(\clh_i)$ with respect to the weak operator topology for $i=1,\ldots,n$, then doubly commuting jointly $(\tilde{T}_1,\ldots,\tilde{T}_n)$-invariant subspaces $\fM$ of $\fH$ are characterized in the form $$\clm_1\otimes\cdots\otimes\clm_n,$$
 where $\clm_i\subseteq\clh_i$ is $T_i$-invariant, for each $i=1,\ldots,n$.   
 
 As applications, we provide a unified framework for understanding and characterizing doubly commuting quotient modules of weighted Bergman spaces over the open unit polydisc, doubly commuting quotient modules of tensor products of complete Nevanlinna–Pick spaces, and doubly commuting invariant and co-invariant subspaces of certain operators acting on tensor products of de Branges–Rovnyak spaces $H(b)$ associated with both extreme and nonextreme points $b$. This recovers, through a different approach, the known characterizations of doubly commuting quotient modules of the Hardy space over the open unit polydisc and doubly commuting submodules of Jordan blocks of Hardy space over the open unit polydisc.

\end{abstract}

\maketitle 
\section{Introduction}
The study of invariant subspaces of operator tuples is a fundamental problem in operator theory, with the structure of such subspaces often reflecting important algebraic and geometric properties of the underlying operators. In particular, when the components of a tuple are commuting or doubly commuting, additional structure may emerge, allowing one to obtain more precise descriptions of their invariant and co-invariant subspaces. A particularly rich class of examples arises from tuples of shift operators acting on spaces of holomorphic functions. A well studied classical space of holomorphic functions is the Hardy space over the open unit disc $\D$, denoted by $H^2(\D)$, defined as the set of all holomorphic functions $f$ on $\D$ such that coefficients of the power series of $f$ is square summable, that is,
\[
H^2(\D)=\Big\{\sum_{k=0}^{\infty}a_kz^k:a_k\in\C\,\text{and}\,\sum_{k=0}^{\infty}|a_k|^2<\infty\Big\}.
\]    
The operator $M_{z}$ on $H^2(\D)$ is the well known shift operator defined as
\[
M_{z}f(w)=wf(w)\quad(f\in H^2(\D),\, w\in\D). 
\]
The invariant subspace theory of  $M_z$ on $H^2(\D)$, initiated by Beurling \cite{B}, has motivated a wide range of investigations into the structure of invariant subspaces for multivariable operator tuples. The study of various aspects of these problems for tuples associated with spaces of holomorphic functions in several variables was pioneered by Ahern, Douglas, Clark, and Yang \cite{AC,ACD,DY98,DY00} and has continued to develop in more recent works by Sarkar, Das, Guo, Izuchi, Nakazi, Seto and many more; see \cite{SSS,CDS,GY,INS,S,SSW,Y}. These investigations reveal that the structure of invariant subspaces can depend significantly on both the underlying operator tuple and the ambient Hilbert space, thereby motivating the study of structural characterizations under suitable assumptions on the tuple.

Our investigation of the structure of invariant subspaces focuses on operator tuples acting on Hilbert spaces that admit a tensor product structure, more precisely, we consider a tuple $(T_1,\ldots,T_n)$ on the space $\fH=\clh_1\otimes\cdots\otimes\clh_n$, for some Hilbert spaces $\clh_i$, for $i=1,\ldots,n$. A closed subspace $\fM\subseteq\fH$ is called jointly
$(T_1,\ldots,T_n)$-invariant if $T_i\fM\subseteq\fM$ for every
$i=1,\ldots,n$. It is called jointly co-invariant if it is jointly
invariant under $(T_1^*,\ldots,T_n^*)$, that is,
$T_i^*\fM\subseteq\fM$ for every $i=1,\ldots,n$. A jointly $(T_1,\ldots,T_n)$-invariant closed subspace $\fM\subseteq\fH$ is said to be \textit{doubly commuting} if $(T_1|_{\fM},\ldots,T_n|_{\fM})$ on $\fM$ is a doubly commuting, that is,
\[
(T_i|_{\fM})(T_j|_{\fM})=(T_j|_{\fM})(T_i|_{\fM})\quad \text{and}\quad 
(T_i|_{\fM})(T_j|_{\fM})^*=(T_j|_{\fM})^*(T_i|_{\fM}),
\] 
for all $i,j=1,\ldots,n$ and $i\neq j$. We say that a jointly $(T_1,\ldots,T_n)$-invariant subspace $\fM\subseteq\fH$ is \textit{hereditary} if $$\fM=\clm_1\otimes\cdots\otimes\clm_n,$$ for some $T_i$-invariant subspace $\clm_i$ of $\clh_i$, for each $i=1,\ldots,n$. 

  A fundamental example of a function space, which has the tensor product form, is the Hardy space $H^2(\D^n)$ over the open unit polydisc $\D^n$, defined as:
\[
H^2(\D^n):=\Bigg\{f=\sum_{\bm k\in\Z_+^n}a_{\bm k}\z^{\bm k}: a_{\bm k}\in\C \,\,\text{and}\,\sum_{\bm k\in\Z_+^n}|a_{\bm k}|^2<\infty\Bigg\},
\]
where $\Z_+^n$ is the semigroup of $n$-tuple of nonnegative integers and for $\bm z=(z_1,\ldots,z_n)\in\D^n$ and $\bm k=(k_1,\ldots,k_n)\in \Z_+^n$,  $\bm z^{\bm k}:=z_1^{k_1}\cdots z_n^{k_n}$. The space  $H^2(\D^n)$ can be identified with 
\[
 H^2(\D)\otimes\cdots\otimes H^2(\D).
\]
via the natural unitary which sends the monomials $\z^{\bm k}$ to $z_1^{k_1}\otimes \cdots \otimes z_n^{k_n}$, for all $\bm k=(k_1,\ldots,k_n)\in\mathbb Z_{+}^n$. The tuple $(M_{z_1},\ldots, M_{z_n})$ on $H^2(\D^n)$ is the tuple of coordinate shift operators on $H^2(\D^n)$, where for all $i=1,\ldots,n$,
\[
M_{z_i}\,\,\text{on}\,\, H^2(\D^n)\cong \big(I_{ H^2(\D)}\otimes\cdots\otimes  \underbrace{M_z}\limits_{\textup{i-th place}}\otimes\cdots \otimes I_{ H^2(\D)}\big)\,\,\text{on}\,\, H^2(\D)\otimes\cdots\otimes H^2(\D).
\]
A jointly $(M_{z_1},\ldots, M_{z_n})$-invariant closed subspace of $H^2(\D^n)$ is said to be a \textit{submodule of $H^2(\D^n)$} and a jointly $(M_{z_1}^*,\ldots, M_{z_n}^*)$-invariant closed subspace is called a \textit{quotient module of $H^2(\D^n)$}.  In \cite{S, INS}, the authors have proved that a doubly commuting quotient module $\mathfrak{Q}$ of $H^2(\D^n)$ is hereditary, that is, $\mathfrak Q$ has the form
\begin{align}\label{Jordan block}
\clq_1\otimes\cdots\otimes\clq_n,
\end{align}
where, $\clq_i$, for all $i=1,\ldots,n$, is a quotient module of $H^2(\D)$. This result is further generalized in \cite{CDS}, in the setting of standard Hilbert modules corresponding to the analytic kernels over $\D^n$. 
A subspace of $H^2(\D^n)$ is said to be a Jordan block of $H^2(\D^n)$ if it has the form as in \eqref{Jordan block}. Consider the operator tuple $(S_1,\ldots,S_n)$ on a Jordan block $\fQ$ of $H^2(\D^n)$ defined by
\[
S_i:= I_{\clq_1}\otimes\cdots\otimes   I_{ \clq_{i-1}} \otimes P_{\clq_i}M_{z}|_{\clq_i}\otimes I_{ \clq_{i+1}}\otimes\cdots \otimes I_{ \clq_n},
\]
 where, $P_{\clq_i}:H^2(\D)\to \clq_i$ is the orthogonal projection, for all $i=1,\ldots,n$. In \cite{SSS}, the authors have shown that any doubly commuting jointly $(S_1,\ldots,S_n)$-invariant subspace $\fM$ of the Jordan block $\fQ$ is also hereditary.

 In each of the cases discussed above, the invariant or co-invariant subspace $\fM$ is doubly commuting if and only if $\fM$ is hereditary. However, such an equivalence does not hold in general. For example, take a doubly commuting submodule $\cls$ of $H^2(\D^n)$. Due to \cite{M}, $\cls$ is of the form $qH^2(\D^n)$, for some inner function $q$ on $\D^n$. If $q$ is not a variable separating function then $qH^2(\D^n)$ is not hereditary subspace of $H^2(\D^n)$. This motivates us to identify classes of tuples $(T_1,\ldots,T_n)$ on $\clh_1\otimes\cdots\otimes\clh_n$ for which every doubly commuting jointly $(T_1,\ldots,T_n)$-invariant subspace is hereditary.

Our framework is developed in a general abstract setting, which is described as follows.
\begin{setup}\label{setup}
	Let $\clh_i$ be a complex separable Hilbert space and  $T_i$ be a bounded linear operator on $\clh_i$, for each $i=1,\ldots,n$. Denote by
	\begin{align*}
		\fH&:= \clh_1\otimes\cdots\otimes\clh_n,
	\end{align*}
	the corresponding tensor product space, and define
\[
\tilde T_i:=
I_{\clh_1}\otimes\cdots\otimes
I_{\clh_{i-1}}\otimes
T_i\otimes
I_{\clh_{i+1}}\otimes\cdots\otimes
I_{\clh_n},
\]
for all $i=1,\ldots,n$. 
\end{setup}

It is easy to check that, as described in Setup \ref{setup}, every hereditary jointly $(\tilde{T}_1,\ldots,\tilde{T}_n)$-invariant subspace of $\fH$ is doubly commuting. Now we ask the following question regarding the structure of doubly commuting invariant subspaces of the tensor product space. 
 
\begin{question}\label{question1}
With the notation introduced in Setup \ref{setup}, under what conditions is every doubly commuting jointly $(\tilde{T}_1,\ldots,\tilde{T}_n)$-invariant subspace $\fM\subseteq\fH$ hereditary?
\end{question}

To address Question \ref{question1}, we introduce a class of operators motivated 
 by Agler’s hereditary functional calculus (\cite{Ag85, AE}) and the study of hereditary polynomials by Agler, Helton, and Stankus (\cite{AHS}). (We emphasize that the reader should not be confused with the term \textit{hereditary}, which is used in two different contexts. In this article, our definition of \textit{hereditary} pertains to subspaces. Henceforth, we use the term \textit{hereditary} exclusively in this context.)
 
\begin{definition} \label{Defn: admissible}
    A bounded linear operator $T$ on a Hilbert space $\clh$ is said to be an admissible operator if $$\overline{\operatorname{span}}^{\text{WOT}}\{T^{*p} T^{q}:p,q\in \Z_+\}=\clb(\clh),$$
    where $\clb(\clh)$ denotes the algebra of bounded linear operators on $\clh$.
\end{definition}
In the following, we state the main theorem of the paper that provides an answer to Question \ref{question1}.
\begin{theorem} \label{mainthm}
   Let $T_i\in\mathcal B(\clh_i)$ be admissible for all $i=1, \ldots, n$. Let $\fM$ be a jointly $(\tilde T_1, \ldots, \tilde T_n)$-invariant subspace of $\fH$ (see Setup \ref{setup}). Then $\fM$ is doubly commuting if and only if there exists invariant subspaces $\mathcal M_i$ of $\clh_i$ under $T_i$, for each $i=1, \ldots, n$, such that $$\fM=\clm_1 \otimes \cdots \otimes \clm_n.$$
\end{theorem}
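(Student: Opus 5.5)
The ``if'' direction is routine. If $\fM=\clm_1\otimes\cdots\otimes\clm_n$ with each $\clm_i$ invariant under $T_i$, then
\[
\tilde T_i|_{\fM}=I_{\clm_1}\otimes\cdots\otimes T_i|_{\clm_i}\otimes\cdots\otimes I_{\clm_n}.
\]
Operators acting on different tensor factors commute, and each also commutes with the adjoints of the others. So $\fM$ is doubly commuting.

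For the converse, the plan is to work with the orthogonal projection $P=P_{\fM}$ and to show that it factors as $P=P_{\clm_1}\otimes\cdots\otimes P_{\clm_n}$.

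\textbf{Step 1: translate double commutativity into projection identities.} Invariance gives $P\tilde T_iP=\tilde T_iP$. Since $(\tilde T_j|_{\fM})^*=P\tilde T_j^*|_{\fM}$, the doubly commuting hypothesis for $i\neq j$ reads
\[
\tilde T_iP\tilde T_j^*P=P\tilde T_j^*\tilde T_iP .
\]
By induction on word length, for any monomials in distinct variables this yields
\[
P\,\tilde T_j^{*p}\tilde T_j^{q}\,\tilde T_i^{*r}\tilde T_i^{s}\,P=\big(P\tilde T_j^{*p}\tilde T_j^{q}P\big)\big(P\tilde T_i^{*r}\tilde T_i^{s}P\big).
\]
The precise claim to prove is that $P$ applied to a product of hereditary monomials in different coordinates equals the product of the compressions. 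For instance, $P\tilde T_i^{*r}\tilde T_i^sP$ should be written as $(\tilde T_i|_\fM)^{*r}(\tilde T_i|_\fM)^s$; this holds because $\tilde T_i^sP=P\tilde T_i^sP$. The mixed adjoint and non-adjoint terms in different variables are then handled by the doubly commuting relations.

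\textbf{Step 2: extend to all operators.} Hereditary monomials in each variable are WOT-dense in $\clb(\clh_i)$ by admissibility. The maps $X\mapsto P(I\otimes\cdots\otimes X\otimes\cdots\otimes I)P$ are WOT-continuous, and multiplication is separately WOT-continuous. Combining these, we obtain, for all $X_i\in\clb(\clh_i)$,
\[
P(X_1\otimes\cdots\otimes X_n)P=\prod_i P\,\tilde X_i\,P,\qquad \tilde X_i:=I\otimes\cdots\otimes X_i\otimes\cdots\otimes I,
\]
and the compressions $A_i(X):=P\tilde X P$ commute across different $i$. Limits are taken one variable at a time.

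\textbf{Step 3: identify the factors.} Take $X_i$ to be rank-one operators $e\otimes f^*$. Then $P\tilde X_iP$ has a controlled form, and the product formula forces $P$ to be a product of commuting projections $Q_i=\tilde R_i$. Here $R_i$ should be defined as the projection onto the closed span of the slices $\{(\langle\cdot,g\rangle\text{ on other factors})\fM\}$. A cleaner route would be to show that $P$ lies in the WOT-closed algebra generated by $\{\tilde X_i\}$ and commutes appropriately, but I am less sure of it.

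Concretely, I would first try the case $n=2$ and show $P\tilde X_1P\,\tilde X_2 P=P\tilde X_2 P\tilde X_1P$ for all $X_1,X_2$. Choosing $X_2=I$ and $X_1$ arbitrary, and then varying, this should yield $P=(P_{\clm_1}\otimes I)(I\otimes P_{\clm_2})$, where $\clm_1=\overline{\operatorname{span}}\{(I\otimes g^*)\fM\}$ and similarly for $\clm_2$. Invariance of $\clm_i$ under $T_i$ then follows directly from $\tilde T_i\fM\subseteq\fM$. The general case follows by induction, grouping $\clh_2\otimes\cdots\otimes\clh_n$.

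\textbf{Main obstacle.} The hardest step is Step 3: turning the multiplicativity of compressions into an actual tensor factorization of $P$. The key tool will be the fact that the only projections satisfying $P\,(X\otimes I)P\,(I\otimes Y)P=P(X\otimes Y)P$ for all $X,Y$ are products of the two slice projections. The difficulty is to show that $P\geq P_{\clm_1}\otimes P_{\clm_2}$, i.e.\ that $\clm_1\otimes\clm_2\subseteq\fM$. I would try to obtain this by applying the product formula with $X=x\otimes u^*$ and $Y=y\otimes v^*$ for $u\otimes v$ in $\fM$-slices.
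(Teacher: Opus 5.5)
Your route is different from the paper's, and Steps 1--2 are correct. Write $A_i=\tilde T_i|_{\fM}$. Invariance gives $P\tilde T_j^{*p}\tilde T_j^{q}\tilde T_i^{*r}\tilde T_i^{s}P=A_j^{*p}A_i^{*r}A_j^{q}A_i^{s}$, and $A_i^*A_j=A_jA_i^*$ turns this into $A_j^{*p}A_j^{q}A_i^{*r}A_i^{s}$. Density then yields $P(X\otimes Y)P=P(X\otimes I)P(I\otimes Y)P=P(I\otimes Y)P(X\otimes I)P$.

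The gap is Step 3, which carries all the weight and is not proved as written. The ``key tool'' you cite, that only products of slice projections satisfy the product formula, is exactly the statement to be established. Choosing $X_2=I$ gives only a tautology.

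Step 3 can be closed along the lines of your last sentence. For $n=2$, use the slice maps $L_g(h\otimes k)=\langle k,g\rangle h$ and $R_f(h\otimes k)=\langle h,f\rangle k$. Let $\clm_1$ be the closed span of the sets $L_g\fM$ and $\clm_2$ the closed span of the sets $R_f\fM$. Since $\langle\zeta,x\otimes g\rangle=\langle L_g\zeta,x\rangle$, every closed subspace satisfies $\fM\subseteq(\clm_1\otimes\clh_2)\cap(\clh_1\otimes\clm_2)=\clm_1\otimes\clm_2$.

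For the reverse inclusion, fix $\zeta,\eta\in\fM$ and put $u=L_g\zeta$, $v=R_f\eta$. Let $Xh=\langle h,f\rangle u$ and $Yk=\langle k,v\rangle g$. Then $(X\otimes I)\eta=u\otimes v=(I\otimes Y^*)\zeta$, so
\[
\|P(u\otimes v)\|^2=\langle P(I\otimes Y)P(X\otimes I)P\eta,\zeta\rangle=\langle (X\otimes Y)\eta,\zeta\rangle=\langle \eta,f\otimes v\rangle\,\langle u\otimes g,\zeta\rangle=\|v\|^2\|u\|^2 .
\]
Hence $u\otimes v\in\fM$, and so $\fM=\clm_1\otimes\clm_2$. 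Invariance of $\clm_1$ under $T_1$ follows from $T_1L_g=L_g(T_1\otimes I)$, and similarly for $\clm_2$.

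For $n>2$, apply this to $\clh_1\otimes\clk$ with $\clk=\clh_2\otimes\cdots\otimes\clh_n$. Rank-one operators on $\clk$ are norm limits of sums of elementary tensors, so the product formula holds for them. Then $P=P_{\clm_1}\otimes P_{\mathfrak F}$ and $P(I\otimes Z)P=P_{\clm_1}\otimes P_{\mathfrak F}ZP_{\mathfrak F}$, which pass the Step 2 identities down to $\mathfrak F$.

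\emph{Comparison with the paper.} The paper forms $\fM_1=\overline{\operatorname{span}}\{\tilde T_2^{*k_2}\cdots\tilde T_n^{*k_n}\fM\}$ and shows via Theorem~\ref{reducing result} that it reduces $\tilde T_2,\dots,\tilde T_n$. It then uses irreducibility, Lemma~\ref{commutator}, the double commutant theorem and the commutant of $I\otimes\clb(\cdot)$ (Proposition~\ref{reducingirr}) to get $\fM_1=\clm_1\otimes\clh_2\otimes\cdots\otimes\clh_n$. Finally it shows that $\fM$ reduces $T_1|_{\clm_1}\otimes I$ and splits off $\clm_1$ by irreducibility of $T_1|_{\clm_1}$. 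Your argument avoids all of this von Neumann algebra and irreducibility machinery, and it identifies the $\clm_i$ explicitly as slice spaces. The price is that it needs rank-one operators in every factor, hence density for every $T_i$. The paper's route only needs $T_1$ to be fully irreducible (Theorem~\ref{weakmainthm}).

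\emph{Caveat on Step 2.} The map $X\mapsto I\otimes X\otimes I$ is WOT-continuous on bounded sets but not in general; it is $\sigma$-weakly continuous. So testing limits against arbitrary vectors of $\fM$ really uses $\sigma$-weak density of $\cls(T_i)$, or bounded approximating nets. The paper makes the same tacit step when it applies Theorem~\ref{reducing result} to the $\tilde T_i$, which are not admissible on $\fH$. This is therefore a shared point, not a defect specific to your route.
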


We prove the theorem in Subsection \ref{proof_MT}. This theorem provides a unified framework to reprove several known results. By showing $M_{z}^*$ on $H^2(\D)$ is admissible, Theorem \ref{mainthm} characterizes the doubly commuting quotient modules of $H^2(\D^n)$, so it recovers  \cite[Theorem 3.2]{S} (see Corollary \ref{hardyresultrecovery}). By showing $P_{\clq}M_{z}|_{\clq}$ on a nontrivial quotient module $\clq\subseteq H^2(\D)$ is admissible, Theorem \ref{mainthm} characterizes the doubly commuting submodules of a Jordan block of $H^2(\D^n)$, thereby recovering \cite[Theorem 5.2]{SSS} (see Corollary \ref{SSS_Theorem}). By establishing the admissibility of the adjoints of the shift operators on weighted Bergman spaces $A^2_{\omega}(\D)$ associated to some weight  sequence $\omega$, and we obtain Theorem \ref{resultweightedberg} as a consequence of Theorem \ref{mainthm}. We recover \cite[Theorem 4.9]{CDS} as a special case (see Subsection \ref{weighted bergman}). 

 As further applications of Theorem \ref{mainthm}, we also establish several new results characterizing doubly commuting jointly invariant subspaces in various settings. Considering the de Branges–Rovnyak spaces $H(b)$ associated with the nonconstant extreme points $b$, we show that both the operator $X_b:=M_z^*|_{H(b)}$ and $X_b^*$ on $H(b)$ are admissible. Therefore, as an application of Theorem \ref{mainthm}, we provide a characterization of jointly invariant subspaces, jointly co-invariant subspaces and more generally of jointly mixed invariant subspaces of the tensor products of $H(b)$ spaces for extreme points $b$ under the operators as in Setup \ref{setup} (see Theorem \ref{mixed}). For the nonextreme symbol case, we give a counterexample to illustrate that neither of $X_b$ or $X_b^*$ are admissible, in general (see Lemma \ref{non-extre not adm} and Example \ref{nonextreme}). Therefore we provide a condition to identify a class of nonextreme points $b$ having $X_b$ as an admissible operator and for this class we characterize the doubly commuting invariant subspaces of the tensor product of $H(b)$ spaces (see Theorem \ref{non-extre adm} and Example \ref{nonex adm ex}). We consider weighted Bergman spaces $A^2_{\omega}(\D)$ associated to some weight sequence $\omega$ and show that adjoint of shifts operators on $A^2_{\omega}(\D)$ are admissible. Thus, as a consequence of Theorem \ref{mainthm}, we characterize doubly commuting quotient modules of weighted Bergman spaces $A^2_{\bm\omega}(\D^n)$ associated some muli-weight sequence $\bm\omega$ (see Subsection \ref{weighted bergman}). We also consider the reproducing kernel Hilbert space $\clh(K)$, associated with the complete Nevanlinna-Pick kernel (\cite{AglerM}) $K$ over $\D$ and show that adjoint of the shift operator on $\clh(K)$ is admissible. Thus, an application of Theorem \ref{mainthm} we give a characterization of doubly commuting quotient modules in this setting (see Subsection \ref{NPS}). 

The following notation are frequently used in this paper. Let $\mathbb Z_+$ denote the set of nonnegative integers. For a Hilbert space $\clh$, $\clb(\clh)$ denotes the algebra of bounded linear operators on $\clh.$ For a subset $X \subseteq \clb(\clh),$ $X'$ denotes the set $$\{T \in \clb(\clh) :TS=ST, \, S \in X\}.$$ For a closed subspace $\clm$ of $\clh,$ let $P_{\clm}$ denote the orthogonal projection onto $\clm$, whereas $P_{\C \xi}$, for some vector $\xi$, denote the orthogonal projection onto the one dimensional subspace spanned by $\xi$. The notations WOT and SOT stands for weak operator topology and strong operator topology in $\clb(\clh)$, respectively. For a Hilbert space $\clh,$ $I_{\clh}$ stands for the identity operator on $\clh.$ For closed subspaces $\clm_1,\clm_2\subseteq\clh$ with
$\clm_2\subseteq\clm_1$, we denote by $\clm_1\ominus\clm_2$ the orthogonal complement of $\clm_2$ in $\clm_1$.

\section{On admissible operators and the proof of the Main Theorem}\label{Admissible}
In this section, we first discuss some properties of admissible operators. Then we establish several preliminary results before presenting the proof of Theorem \ref{mainthm}. A closed subspace $\clm\subseteq\clh$ is said to reduce $T\in\clb(\clh)$ if $T\clm \subseteq\clm$ and $T^*\clm \subseteq\clm$. An operator $T\in\clb(\clh)$ is said to be \textit{irreducible} if the only reducing subspaces of $T$ are $\{0\}$ and $\mathcal H$. We begin with the following definition.
\begin{definition} \label{fullyirredu}
An operator $T\in\clb(\clh)$ is said to be fully irreducible if for every nontrivial closed $T$-invariant subspace $\clm\subseteq\clh$, $T|_{\mathcal M}$ is irreducible. 
\end{definition}
For $T\in\clb(\clh)$, we denote
\begin{align} \label{stnt}
\cls(T):=\operatorname{span}\{T^{*p} T^{q}:p,q\in \Z_+\} \quad\mbox{and} \quad \cln(T)=\overline{\cls(T)}^{\text{WOT}}.
\end{align}
We begin with the following property of admissible operators.
\begin{proposition} \label{admtostrng}
Let $T\in\clb(\clh)$ be an admissible operator. Then $T$ is fully irreducible.
\end{proposition}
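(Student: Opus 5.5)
Fix a nontrivial closed $T$-invariant subspace $\clm\subseteq\clh$. We must show that $A:=T|_{\clm}$ is irreducible. Suppose $\cln_0\subseteq\clm$ is a closed subspace reducing $A$. Let $Q=P_{\cln_0}$, viewed as an operator on $\clm$. Then $Q$ commutes with $A$ and $A^*$. The plan is to show that $Q$ is a scalar. Since $Q$ is a projection, this forces $\cln_0=\{0\}$ or $\cln_0=\clm$.

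The key computation links $A$ to $T$. For $x,y\in\clm$ we have $A^*=P_{\clm}T^*|_{\clm}$. Since $T^q\clm\subseteq\clm$, this gives
\[
\la A^{*p}A^{q}x,y\ra=\la A^qx,A^py\ra=\la T^qx,T^py\ra=\la T^{*p}T^qx,y\ra .
\]
Hence $A^{*p}A^q=P_{\clm}T^{*p}T^q|_{\clm}$ for all $p,q\in\Z_+$. The compression map $X\mapsto P_{\clm}X|_{\clm}$ is linear and WOT-continuous from $\clb(\clh)$ to $\clb(\clm)$. Admissibility of $T$ says that $\cls(T)$ is WOT-dense in $\clb(\clh)$. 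It follows that $\cls(A)=\{P_{\clm}X|_{\clm}:X\in\cls(T)\}$ is WOT-dense in the compressions of $\clb(\clh)$, which is all of $\clb(\clm)$: any $Y\in\clb(\clm)$ equals $P_{\clm}(YP_{\clm})|_{\clm}$. So $A$ is itself admissible on $\clm$.

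It now suffices to treat $\clm=\clh$, i.e.\ to show that an admissible operator is irreducible. Let $Q$ commute with $T$ and $T^*$. Then $Q$ commutes with every $T^{*p}T^q$, hence with all of $\cls(T)$. The commutant $\{Q\}'$ is WOT-closed, since multiplication by a fixed operator is WOT-continuous. Therefore $Q$ commutes with $\cln(T)=\clb(\clh)$, so $Q$ lies in the center of $\clb(\clh)$ and $Q=\lambda I$. A projection that is a scalar is $0$ or $I$, so $T$ is irreducible. Applying this to $A$ on $\clm$ gives the proposition.

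Only one step needs care. In the compression identity, $T^py$ and $T^qx$ must stay in $\clm$ so that $P_{\clm}$ can be dropped, and this is exactly where invariance of $\clm$ is used. The case $\clm=\{0\}$ is excluded by "nontrivial", and the case $\clm=\clh$ is covered by the same argument.
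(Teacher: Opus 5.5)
Your proof is correct, but it is organized differently from the paper's. Both arguments rest on the identity $\la T^{*p}T^qx,y\ra=\la T^qx,T^py\ra$ for $x,y\in\clm$, which is where invariance of $\clm$ is used.

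The paper applies this identity once, by contradiction. If $\clm=\clm_1\oplus\clm_2$ with both summands nonzero and invariant, then $\la T^{*p}T^qx,y\ra=0$ for $x\in\clm_1$ and $y\in\clm_2$. WOT density upgrades this to $\la Bx,y\ra=0$ for all $B\in\clb(\clh)$. A rank-one $B$ with $Bx=y$ then gives $\|y\|^2=0$.

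You instead use the identity to show that compression to $\clm$ maps $\cls(T)$ onto $\cls(T|_{\clm})$, so $T|_{\clm}$ is itself admissible. You then show that any operator commuting with $T$ and $T^*$ commutes with the WOT closure of $\cls(T)$, hence with all of $\clb(\clh)$, hence is scalar.

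The paper's route is shorter and needs only a single rank-one operator. Yours yields two stronger facts along the way. First, admissibility passes to restrictions on invariant subspaces. Second, an admissible operator satisfies $\{T,T^*\}'=\C I$ directly, without the spectral-projection argument of Lemma \ref{commutator}.
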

\begin{proof}
Let $T \in \clb(\clh)$ be an admissible operator. Suppose, towards a contradiction, $T$ is not fully irreducible. Then there exist a closed $T$-invariant subspace $\clm\neq\{0\}$ such that $A:=T|_{\clm}$ is
reducible. Therefore, there exists a nontrivial proper closed reducing subspace
$\clm_1\subset\clm$. Set
$\clm_2=\clm\ominus\clm_1$. Thus
\[
\clm=\clm_1\oplus\clm_2,
\qquad \clm_1\neq\{0\},\quad\text{and}\quad\clm_2\neq\{0\}.
\]
Let $x\in\clm_1$ and $y\in\clm_2$ be two nonzero vectors. Now, for every $p,q\in\Z_+$, since $A\clm_i\subseteq\clm_i, ~i=1,2,$ we have
\begin{equation*}
\langle T^{*p}T^q x,y\rangle =\langle T^q x,T^p y\rangle=0.
\end{equation*}
This combined with $\cln(T)=\clb(\clh)$ (see \eqref{stnt}) yields that 
\begin{align} \label{forallbh}
  \langle Bx,y\rangle=0, \quad B\in \clb(\clh).  
\end{align}
Now, in particular, consider the rank-one operator $A$ defined as 
\[
Az=\frac{\langle z,x\rangle}{\|x\|^2}\,y\quad (z\in\clh).
\]
It is clear that $Ax=y$. Thus, by \eqref{forallbh}, we have 
\[
\|y\|^2=\langle Ax,y\rangle=0,
\]
which is a contradiction. Hence $T|_{\clm}$ is irreducible. This completes the proof.
\end{proof}
The following example illustrates that a fully irreducible operator may not be admissible, in general.
\begin{example}\label{M_z-not admi}
Consider $M_z$ on $H^2(\D)$. It is well known that $M_z$ is fully irreducible. Now,
\begin{align} \label{toeplitz}
\cln(M_z)=\overline{\operatorname{span}}^{\,\operatorname{WOT}}\{M_z^{*p} M_z^{q}:p,q\in \Z_+\}=\overline{\operatorname{span}}^{\,\text{WOT}}\{I, M_z^{*p}, M_z^{q}:p,q\in \Z_+\}.    
\end{align}
It is now easy to see that for each operator $A \in \cln(M_z),$ we have $M_z^*AM_z=A.$ Hence, each $A \in \cln(M_z)$ is a Toeplitz operator (see \cite[Theorem 6]{BH1963}). On the other hand, the set of all Toeplitz operators is closed in the weak operator topology (see \cite[Corollary 4.2]{GMR}). This together with \eqref{toeplitz} gives that the set of all Toeplitz operators on $H^2(\D)$ is $\cln(M_z)$. Toeplitz operators are characterized through their infinite matrix representation having constant diagonal entries. So the set of all Toeplitz operators can not be full $\clb(H^2(\D))$. Thus $M_z$ is not an admissible operator.
\end{example}
The next result analyzes the admissibility of normal operators and isometries.
\begin{proposition}\label{NorIso} Let $\clh$ be a Hilbert space. Then then following statements hold.
    \begin{enumerate}
        \item  A normal operator $T\in\clb(\clh)$ is admissible if and only if  $\clh$ is one dimensional.

        \item An isometry $V\in\clb(\clh)$ is admissible if and only if $\clh$ is one dimensional.
 \end{enumerate}     
\end{proposition}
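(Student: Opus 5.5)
The plan is to use Proposition \ref{admtostrng}: an admissible operator is fully irreducible, and in particular irreducible. In each part this handles the ``only if'' direction, and the ``if'' direction is trivial. If $\dim\clh=1$, then $\clb(\clh)=\C I$ and $T^{*0}T^0=I$ lies in $\cls(T)$, so every operator on $\clh$, normal or isometric, is admissible.

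For (1), let $T$ be normal and admissible, so $T$ is irreducible. Since $T$ is normal, $T^*$ commutes with $T$, and therefore every element of $\cls(T)$ commutes with $T$. The commutant $\{T\}'$ is WOT-closed, so $\cln(T)\subseteq\{T\}'$. Admissibility then gives $\clb(\clh)\subseteq\{T\}'$, which forces $T$ to commute with every operator. Hence $T=\lambda I$. A scalar operator is reducible as soon as $\dim\clh\ge 2$, because any nontrivial proper subspace reduces it. So $\dim\clh=1$. Note that the commutant argument alone already yields $T=\lambda I$, and the conclusion $\clb(\clh)=\cln(\lambda I)=\C I$ then gives $\dim\clh=1$ directly.

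For (2), let $V$ be an isometry. Then $V^{*p}V^q$ equals $V^{q-p}$ if $q\ge p$ and $V^{*(p-q)}$ if $p>q$. Hence
\[
\cln(V)=\overline{\operatorname{span}}^{\text{WOT}}\{I,V^{k},V^{*k}:k\ge 1\}.
\]
Every element $A$ of this set satisfies $V^*AV=A$: for $A=V^k$ we have $V^*V^kV=V^k$, and similarly for $V^{*k}$ and $I$. The map $A\mapsto V^*AV-A$ is WOT-continuous, so $V^*AV=A$ holds on all of $\cln(V)$. If $V$ is admissible, this identity must hold for every $A\in\clb(\clh)$. Taking $A=P_{\C\xi}$ for a unit vector $\xi$ gives $V^*P_{\C\xi}V=P_{\C\xi}$. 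The left-hand side is the rank-one operator $z\mapsto\langle z,V^*\xi\rangle V^*\xi$, so $P_{\C\xi}=P_{\C V^*\xi}$ and $\|V^*\xi\|=1$. This means $V^*$ is isometric, so $V$ is unitary and hence normal, and part (1) finishes the argument.

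Alternatively, one can use Wold decomposition together with irreducibility: an irreducible isometry is either a unitary on a one-dimensional space or the unilateral shift. The shift is excluded in the same way as in Example \ref{M_z-not admi}, since $\cln$ then consists of Toeplitz operators.

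The step needing the most care is the WOT-closure argument in (2), namely that the identity $V^*AV=A$ passes to the WOT closure. This holds because $A\mapsto V^*AV$ is WOT-continuous, so the step is routine.
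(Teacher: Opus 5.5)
Your proof is correct, and it takes a different and more elementary route than the paper.

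For (1), the paper just cites the spectral theorem for normal operators. You instead observe that for normal $T$ every $T^{*p}T^q$ lies in the WOT-closed commutant $\{T\}'$, so $\cln(T)\subseteq\{T\}'$. Admissibility then makes $T$ commute with all of $\clb(\clh)$, hence $T=\lambda I$, and $\cln(\lambda I)=\C I$ forces $\dim\clh=1$. This needs neither spectral theory nor Proposition \ref{admtostrng}.

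For (2), the paper's argument runs as follows:
\begin{enumerate}
\item It treats finite dimensions separately.
\item It uses full irreducibility and the Wold decomposition to reduce to a unitary or a unilateral shift.
\item It notes that the shift must have multiplicity one.
\item It invokes Example \ref{M_z-not admi}, which rests on the Brown--Halmos characterization and the WOT-closedness of Toeplitz operators.
\end{enumerate}
You keep only the elementary half of that example: the identity $V^*AV=A$ on $\cln(V)$, valid for every isometry. Testing it on rank-one projections shows $V^*$ is isometric, so $V$ is unitary and part (1) applies.

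Your route handles all dimensions uniformly and avoids the Wold decomposition, Toeplitz theory and the multiplicity argument. The paper's route, in turn, illustrates how Proposition \ref{admtostrng} and Example \ref{M_z-not admi} fit together. You could also shorten your last step: taking $A=I-VV^*$ gives $V^*(I-VV^*)V=V^*V-(V^*V)(V^*V)=0$, so $VV^*=I$ at once.
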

\begin{proof}
    (1) The proof follows from the spectral theorem for normal operators (see \cite[Proposition 5.6]{C1990}).

    (2) Let $V$ be admissible. If $\clh$ is finite dimensional then $V$ is a unitary and hence a normal operator. Therefore the conclusion follows from part (1). Now assume that $\clh$ is infinite dimensional. Since $V$ is admissible, by Proposition \ref{admtostrng}, $V$ is fully irreducible. By the Wold decomposition (see \cite{NF}), $V$ can be written as
        \[
        V=U\oplus S\quad\text{on}\quad\clh_u\oplus\clh_s,
        \]
        where, $\clh_u,\clh_s\subseteq\clh$ are reducing subspaces under $V$, $U\in\clb(\clh_u)$ is a unitary and $S\in\clb(\clh_s)$ is a unilateral shift. Since $V$ is fully irreducible, either $\clh_u=\clh$ or $\clh_s=\clh$. In the former case $V=U$ is a unitary and therefore by part (1), $\clh$ has to be one dimensional. In the later case, $V=S$ is a unilateral shift. For a unilateral shift $\operatorname{rank}(I-SS^*)$ can not be greater than $1$, since, if $\operatorname{rank}\,(I-SS^*)\geq 2$, $S$ is always reducible. Thus $S$ is unitarily equivalent to $M_z$ on $H^2(\D)$ via some unitary $J$ and
        \[
        \cln(S)=\overline{\operatorname{span}}^{\,\operatorname{WOT}}\{S^{*p} S^{q}:p,q\in \Z_+\}=J^*\overline{\operatorname{span}}^{\,\operatorname{WOT}}\{M_z^{*p} M_z^{q}:p,q\in \Z_+\}J= J^*\cln(M_z)J.
        \]
Then by Example \ref{M_z-not admi}, $S$ is not admissible. In conclusion, $\clh$ has to be one dimensional. Converse part is trivial.
\end{proof}

 For a pair of vectors $x$ and $y$ in a Hilbert space $\mathcal H,$ let $x \otimes y$ denote the rank-one operator defined as
$$ (x \otimes y)h=\langle h, y \rangle x \quad (h \in \mathcal H).$$

The following result provides a sufficient condition for an operator to be admissible and will be useful in the sequel. This establishes the nontriviality of the class of admissible operators.
\begin{proposition}\label{cyclic_vector}
Let $T\in \clb(\clh)$, and let $T$ has $*$-cyclic unit vector $\bm{e}$, that is,
\[\overline{\operatorname{span }}\{T^{* k} \bm{e}:k\in \Z_+\}=\clh.\]
If $P_{\C \bm{e}} \in \cln(T)$, then $T$ is an admissible operator.
\end{proposition}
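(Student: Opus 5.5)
The idea is to show that $\cln(T)$ contains every rank-one operator $x\otimes y$ with $x,y$ in a dense subspace. Rank-one operators of this kind span a WOT-dense subspace of $\clb(\clh)$, so this is enough.

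First, the multiplication rule. For $A\in\cls(T)$ and $p,q\in\Z_+$ we have $T^{*p}AT^{q}\in\cls(T)$, since $T^{*p}T^{*a}T^bT^q=T^{*(p+a)}T^{b+q}$. The map $A\mapsto T^{*p}AT^q$ is WOT-continuous, so it also sends $\cln(T)$ into $\cln(T)$.

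Apply this with $A=P_{\C\bm e}=\bm e\otimes\bm e\in\cln(T)$. This gives
\[
T^{*p}(\bm e\otimes\bm e)T^{q}=(T^{*p}\bm e)\otimes(T^{*q}\bm e)\in\cln(T)\qquad(p,q\in\Z_+),
\]
using $(x\otimes y)T^q=x\otimes T^{*q}y$. Since $\cln(T)$ is a linear subspace, it then contains $x\otimes y$ for all $x,y$ in $\mathcal D:=\operatorname{span}\{T^{*k}\bm e:k\in\Z_+\}$. By the $*$-cyclicity hypothesis, $\mathcal D$ is dense in $\clh$.

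Next, pass to arbitrary rank-one operators by norm approximation. Given $x,y\in\clh$, choose $x_n\to x$ and $y_n\to y$ in $\mathcal D$. Then
\[
\|x_n\otimes y_n-x\otimes y\|\le\|x_n-x\|\|y_n\|+\|x\|\|y_n-y\|\to0 .
\]
Norm convergence implies WOT convergence, and $\cln(T)$ is WOT-closed, so $x\otimes y\in\cln(T)$. Consequently $\cln(T)$ contains all finite-rank operators.

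Finally, the finite-rank operators are WOT-dense (even SOT-dense) in $\clb(\clh)$. Fix an orthonormal basis of the separable space $\clh$ and let $P_N$ be the projection onto the span of its first $N$ vectors. For $B\in\clb(\clh)$, the finite-rank operators $P_NBP_N$ converge to $B$ in SOT, hence in WOT. Therefore $\cln(T)=\clb(\clh)$, so $T$ is admissible.

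The only step needing care is the conjugation step: it requires that both the span $\cls(T)$ and its WOT closure are invariant under $A\mapsto T^{*p}AT^q$. This holds because left multiplication by $T^{*p}$ and right multiplication by $T^q$ are WOT-continuous, and the rest of the argument is routine.
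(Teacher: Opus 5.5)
Your proposal is correct and follows the same route as the paper's proof: conjugate $\bm e\otimes\bm e$ to get $T^{*p}\bm e\otimes T^{*q}\bm e\in\cln(T)$, use $*$-cyclicity to obtain all rank-one operators, then use WOT-density of their span. You also make explicit two steps the paper leaves implicit, namely that $\cln(T)$ is invariant under $A\mapsto T^{*p}AT^{q}$ and the norm approximation from the dense subspace to arbitrary $x\otimes y$.
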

\begin{proof}
Let
\[P_{\C \bm{e}}=\bm{e}\otimes \bm{e} \in \cln(T)=\overline{\operatorname{span }}\{T^{*p} T^{q}:p,q\in \Z_+\}.\]
Now, for $m, n \in \mathbb Z_+$ and $x \in \clh,$
\begin{align*}
    T^{*m}(\bm{e} \otimes \bm{e})T^nx=\langle T^nx, \bm{e}\rangle T^{*m}\bm{e} =\langle x, T^{*n}\bm{e}\rangle T^{*m}\bm{e}= (T^{*m}\bm{e} \otimes T^{*n}\bm{e})x.
\end{align*}
Therefore, the above computation reveals that  $T^{*m}\bm{e} \otimes T^{*n}\bm{e} \in \cln(T)$ for each $m,n \in \mathbb Z_+$. Again, since $\overline{\operatorname{span}}\{T^{*k}\bm{e}: k\geq 0\}=\clh$,  we have 
\[x\otimes y\in \cln(T)\quad (x,y\in \clh).\]
The conclusion follows from the fact that the weak closure of the span of rank-one operators is $\clb(\clh)$.
\end{proof}
The following remark shows that the converse of the above result may not be true.
\begin{remark}

Let $T\in\clb(\clh)$ be such that $\cln(T)=\clb(\clh)$. Since $\cls(T)$ (see \eqref{stnt}) is convex, its SOT-closure is also equal to $\clb(\clh)$ (see \cite[Theorem II.2.6]{T1979}). Thus for any nonzero $h \in \clh$,
\[
\overline{\operatorname{span}}\{T^{*p}T^qh : p,q \geq 0\}=\clh.
\] 
Thus, if $T$ has an eigenvalue then $T^*$ is cyclic.
In general, $T^*$ may not be cyclic, see \cite{B1980}.
\end{remark}
Using Proposition \ref{cyclic_vector}, we give the following immediate example of an admissible operator. More nontrivial interesting examples will appear in later sections.
\begin{example}\label{M_z*-admissible}
   Consider the backward shift $M_z^*$ on the Hardy space $H^2(\D)$. Now it is easy to observe that $I-M_z M_z^*=P_{\C \bm 1}$, where $\bm 1\in H^2(\D)$ is the constant function which takes the value $1$. Also, we have
\[
\overline{\operatorname{span}}\{M_z^k\bm 1: k\geq 0\}=\overline{\operatorname{span}}\{z^k: k\geq 0\}=H^2(\D).
\]
Therefore, by Proposition \ref{cyclic_vector}, 
$M^*_z$ is an admissible operator.
\end{example}
Based on the above example, we have the following result on the admissibility of co-isometries. Recall that an operator $V \in \clb(\clh)$ is said to be a co-isometry if $V^*$ is an isometry.
\begin{proposition}
    Let $V\in\clb(\clh)$ be a co-isometry. Then $V$ is admissible if and only if one of the following statements hold:
    \begin{enumerate} \item $\clh$ is one dimensional.
    \item $V$ is completely non-unitary and $\operatorname{rank}(I-V^*V)=1$.
    \end{enumerate} 
\end{proposition}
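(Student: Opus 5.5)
We treat the two directions separately, with the Wold decomposition applied to the isometry $V^*$ doing the structural work in the forward direction and Proposition \ref{cyclic_vector} in the reverse one.

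\emph{Forward direction.} Suppose $V$ is admissible and $\dim\clh\ge 2$. Since $V^*$ is an isometry, Wold gives $V^*=U\oplus S$ on $\clh_u\oplus\clh_s$, where $U$ is unitary, $S$ is a unilateral shift, and both summands reduce $V$.

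By Proposition \ref{admtostrng}, $V$ is fully irreducible, in particular irreducible. Hence either $\clh=\clh_u$ or $\clh=\clh_s$. In the first case $V$ is unitary, hence normal, so Proposition \ref{NorIso}(1) forces $\dim\clh=1$, a contradiction. So $V^*=S$ is a pure shift, i.e. $V$ is completely non-unitary.

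The multiplicity of $S$ is $\operatorname{rank}(I-SS^*)=\operatorname{rank}(I-V^*V)$. If this rank were $\ge 2$, split the wandering subspace $\mathcal W=\ker S^*$ as $\mathcal W_1\oplus\mathcal W_2$ with both pieces nonzero. Then $\bigoplus_k S^k\mathcal W_1$ is a nontrivial reducing subspace for $S$, hence for $V$, contradicting irreducibility. The rank is nonzero, since $V^*V=I$ would make $V$ unitary. Therefore $\operatorname{rank}(I-V^*V)=1$.

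\emph{Reverse direction.} If $\dim\clh=1$, then $\clb(\clh)=\C=\operatorname{span}\{I\}$, which is trivially contained in $\cln(V)$. So assume (2).

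Then $V^*$ is a pure isometry of multiplicity one, so there is a unitary $J:\clh\to H^2(\D)$ with $JV^*J^*=M_z$, hence $JVJ^*=M_z^*$. This gives
\[
\cln(V)=J^*\,\overline{\operatorname{span}}^{\text{WOT}}\{M_z^{p}M_z^{*q}:p,q\in\Z_+\}\,J=J^*\cln(M_z^*)J.
\]
By Example \ref{M_z*-admissible}, $\cln(M_z^*)=\clb(H^2(\D))$, so $\cln(V)=\clb(\clh)$.

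Alternatively, one can avoid the model: take a unit vector $\bm e$ spanning $\operatorname{ran}(I-V^*V)$. Then $I-V^*V=P_{\C\bm e}\in\cls(V)$, and purity of $V^*$ gives $\overline{\operatorname{span}}\{V^{*k}\bm e\}=\clh$. Proposition \ref{cyclic_vector} then applies directly.

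\emph{Main obstacle.} The step needing the most care is the rank argument in the forward direction: exhibiting an explicit reducing subspace when the multiplicity is at least $2$, and checking that the unitary part cannot coexist with the shift part. Both reduce to irreducibility of $V$, which Proposition \ref{admtostrng} supplies.
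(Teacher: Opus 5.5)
Your proposal is correct and follows the paper's route. For the forward direction you use the Wold decomposition of $V^*$, with irreducibility from Proposition \ref{admtostrng} and Proposition \ref{NorIso}(1) to discard the unitary part; for the converse you use unitary equivalence of $V$ with $M_z^*$ on $H^2(\D)$ together with Example \ref{M_z*-admissible}. You also write out the reducing subspace $\bigoplus_{k\ge 0} S^k\mathcal W_1$ for multiplicity at least $2$, which the paper only asserts, and your alternative converse via Proposition \ref{cyclic_vector} is equally valid.
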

\begin{proof}
    The forward direction follows using the Wold decomposition of $V^*$ and the argument of the proof of item (2) of Proposition \ref{NorIso}.
        For the converse part, if $\clh$ is one dimensional, then trivially $V$ is admissible. On the other hand, if $V$ is completely non-unitary and  $\operatorname{rank}(I-V^*V)=1$, $V$ is unitarily equivalent to $M_z^*$ on $H^2(\D)$. Therefore, using Example \ref{M_z*-admissible}, $V$ is admissible.
\end{proof}
 We provide another sufficient condition for an operator to be admissible and for this purpose, recall that, an operator $T\in \clb(\clh)$ is said to be pure contraction if $\|T\|\leq 1$ and $T^{* n}\to 0$ as $n\to \infty$ in strong operator topology.
\begin{proposition}\label{pure_rank 1}
   Let $T\in \clb(\clh)$ be such that $T^*$ is a pure contraction with $\operatorname{rank }(I-T^*T)=1$ then $T$ is admissible.
\end{proposition}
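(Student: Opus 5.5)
The plan is to reduce to Proposition \ref{cyclic_vector}. The proof has three steps: identify a defect vector, show its projection lies in $\cln(T)$, and show that vector is $*$-cyclic for $T$.

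\emph{Step 1: the defect projection.} Since $\operatorname{rank}(I-T^*T)=1$ and $\|T^*\|=\|T\|\le 1$, the defect $I-T^*T$ is a positive rank-one operator. Hence it equals $\lambda\, \bm e\otimes \bm e$ for some unit vector $\bm e$ and some $\lambda>0$. This gives
\[
P_{\C\bm e}=\lambda^{-1}(I-T^*T)\in \cls(T)\subseteq \cln(T),
\]
using the terms with $(p,q)=(0,0)$ and $(p,q)=(1,1)$.

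\emph{Step 2: $\bm e$ is $*$-cyclic for $T$.} Put $\clk:=\overline{\operatorname{span}}\{T^{*k}\bm e:k\in\Z_+\}$. I will show $\clk^\perp=\{0\}$. Let $x\perp \clk$, so $\langle T^k x,\bm e\rangle=0$ for all $k$. Then $(I-T^*T)T^kx=\lambda\langle T^kx,\bm e\rangle \bm e=0$, which says $\|T^{k+1}x\|=\|T^kx\|$ for every $k$. Note that $\clk^\perp$ is $T$-invariant, since $\clk$ is $T^*$-invariant, so $T$ restricted to $\clk^\perp$ is an isometry.

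\emph{Step 3: use purity to conclude, the main obstacle.} Purity of $T^*$ means $(T^*)^{*n}=T^n\to 0$ strongly. Combined with Step 2 this gives $\|x\|=\|T^nx\|\to 0$, so $x=0$ and $\clk=\clh$.

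The delicate point is reading the definition of purity correctly. It is stated as ``$T^{*n}\to0$'' applied to the contraction $T^*$, i.e.\ $(T^*)^{*n}=T^n\to 0$. This is precisely what kills the isometric part in Step 3, and it matches the model $T=M_z^*$, where $T^*=M_z$ and the relevant convergence is $M_z^{*n}\to0$. If instead the intended convention were $T^{*n}\to0$, I would rerun the argument with $T$ and $T^*$ swapped. Step 2 would then be phrased using the defect of $T$, and a separate check would be needed that the resulting structure still fits Proposition \ref{cyclic_vector}. The stated hypotheses $\operatorname{rank}(I-T^*T)=1$ and $\bm e$ being $*$-cyclic fit the first reading, so I would proceed with it.

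With Steps 1--3 in hand, Proposition \ref{cyclic_vector} applies directly and yields that $T$ is admissible.
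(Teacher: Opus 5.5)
Your proof is correct and is essentially the paper's argument. The paper derives the telescoped identity $\|v\|^2-\|T^{n+1}v\|^2=\sum_{i=0}^{n}|\langle T^{*i}h,v\rangle|^2$, which is the summed form of your one-step norm preservation $\|T^{k+1}x\|=\|T^kx\|$. It then uses $T^n\to 0$ in SOT to get $*$-cyclicity and applies Proposition~\ref{cyclic_vector}, exactly as you do. Your reading of purity matches the paper's definition, so the hedge about the other convention is unnecessary. Writing $I-T^*T=\lambda\,\bm e\otimes\bm e$ with $\lambda>0$ is slightly more careful than the paper's ``unit vector $h$ with $I-T^*T=h\otimes h$''.
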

\begin{proof}
Since $T$ is a contraction and  $\operatorname{rank }(I-T^*T)=1$, there exists a unit vector $h\in \clh$ such that $I - T^*T=h \otimes h$. Since
\[T^{*n} T^{ n}-T^{*n+1}T^{ n+1}=T^{*n}(I -T^* T) T^{ n}=T^{*n} (h\otimes h)T^{n}=T^{*n} h\otimes T^{n} h, \quad (n\geq 0),\]
using iteration, we obtain
\[I-T^{*n+1} T^{ n+1}=\sum_{i=0}^nT^{*i} h\otimes T^{ i}h.\]
Thus, for $v\in\clh$,
\begin{align} \label{norm diffe}
  \|v\|^2-\|T^{ n+1}v\|^ 2=\la (I-T^{*n+1} T^{ n+1})v, v \ra=\sum_{i=0}^n\la (T^{*i} h\otimes T^{ i}h)v,v\ra=\sum_{i=0}^n|\la T^{ *i}h, v\ra|^2.
\end{align}
Let $v \in \clh$ and $v\perp \operatorname{span }\{T^{*k}h:k\geq 0\}$. Since $T^*$ is pure, by \eqref{norm diffe}, we have $v=0$. This shows that 
\[\overline{\operatorname{span }}\{T^{*k} h:k\in \Z_+\}=\clh.\]
This combined with $I-T^*T \in \cln(T)$ yields that $T$ is admissible (see Proposition \ref{cyclic_vector}). 
\end{proof}

\subsection{Proof of the Main Theorem}\label{proof_MT}
We begin with the following result which is well known in the literature but for the sake of completeness we are including a proof here.
\begin{lemma}\label{commutator}
Let $T\in \clb(\clh)$ be an irreducible operator. Then
\begin{align}
\{T, T^*\}^\prime=\C I_\clh.
\end{align}
\end{lemma}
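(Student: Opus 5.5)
The plan is to use the fact that $\{T,T^*\}'$ is a von Neumann algebra, so it is generated by its projections, and to show that every projection in it is trivial. Indeed, $\{T,T^*\}'$ is a unital subalgebra of $\clb(\clh)$. It is closed under adjoints: if $AT=TA$ and $AT^*=T^*A$, then taking adjoints gives $T^*A^*=A^*T^*$ and $TA^*=A^*T$. It is also WOT-closed, since commuting with fixed operators is a weakly closed condition.

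The key step concerns projections. Let $P$ be an orthogonal projection in $\{T,T^*\}'$ and set $\clm=P\clh$. For $x\in\clm$ we have $Tx=TPx=PTx\in\clm$, and likewise $T^*x\in\clm$. Hence $\clm$ reduces $T$. By irreducibility, $\clm=\{0\}$ or $\clm=\clh$, so $P\in\{0,I_\clh\}$.

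Next we pass from projections to arbitrary elements. Let $A\in\{T,T^*\}'$. Since the algebra is $*$-closed, $\operatorname{Re}A=(A+A^*)/2$ and $\operatorname{Im}A=(A-A^*)/(2i)$ also lie in it, so it suffices to treat a self-adjoint $A$. By the spectral theorem, every spectral projection $E(\Delta)$ of $A$, for $\Delta$ a Borel set, is a strong limit of polynomials in $A$, or equivalently lies in $\{A\}''$. Since $T$ and $T^*$ commute with $A$, they lie in $\{A\}'$, and therefore $E(\Delta)$ commutes with $T$ and $T^*$. By the previous step, each $E(\Delta)$ equals $0$ or $I_\clh$.

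The spectral measure is then $\{0,1\}$-valued, which forces $\sigma(A)$ to be a single point $\lambda$. Otherwise we could split $\sigma(A)$ into two disjoint Borel pieces, each of positive spectral measure, giving nontrivial spectral projections. So $A=\lambda I_\clh$. Combining the real and imaginary parts gives $\{T,T^*\}'=\C I_\clh$. The reverse inclusion is trivial.

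The main obstacle is the claim that spectral projections of $A$ commute with everything that commutes with $A$. This is the standard Fuglede-free fact for self-adjoint operators, via the Borel functional calculus or the double commutant theorem. I would simply cite it, for example from Conway's text.
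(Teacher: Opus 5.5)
Your proof is correct and follows the paper's own proof: split $C\in\{T,T^*\}'$ into the self-adjoint parts $(C+C^*)/2$ and $(C-C^*)/(2i)$, observe that their spectral projections commute with $T$ and $T^*$, use irreducibility to force each spectral projection to be $0$ or $I_\clh$, and conclude that both parts are scalars. You also spell out two steps the paper leaves implicit: that the commutant is $*$-closed, and why a $\{0,1\}$-valued spectral measure forces the spectrum to be a single point.
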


\begin{proof}
Let $C\in \{T, T^*\}^\prime$, that is,
\[C T =T C\quad \text{and}\quad C T^*=T^* C.\]
We define 
\[A=\frac{C+C^*}{2} \quad \text{and}\quad B=\frac{C-C^*}{2i}.\]
Therefore both self-adjoint operators $A$ and $B$ commute with $T$ and $T^*$. This shows that every spectral projection of $A$ commutes with $T$ and $T^*$. Since $T$ is irreducible, each such projection is either $0$ or $I_\clh$. Hence $A$ is a scalar operator. Similarly, $B$ is scalar operator. This completes the proof. 
\end{proof}

The following result is useful not only in the proof of the main theorem but is also of independent interest.
\begin{theorem}\label{reducing result}
Let $(T_1,\ldots,T_n)$ be a doubly commuting tuple on a Hilbert space $\clh$ such that each $T_i$ is admissible for $i=1,\ldots,n$. Let $\Lambda\subseteq\{1,\ldots,n\}$. If a closed subspace $\clm\subseteq\clh$ is jointly invariant under $(T_1,\ldots,T_n)$, then
\[
\clm_{\Lambda}=\overline{\operatorname{span}}\left\{\left(\prod_{i\in \Lambda}T_i^{* \alpha_i}\right)h:
h\in\clm,\ \alpha_i\in\Z_+\right\}
\]
reduces every $T_i$ with $i\in \Lambda$.
\end{theorem}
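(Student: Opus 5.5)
The plan is to check the two halves of the reducing property separately: invariance of $\clm_{\Lambda}$ under $T_j^*$ and under $T_j$, for each fixed $j\in\Lambda$. Since $(T_1,\ldots,T_n)$ commutes, the adjoints $T_1^*,\ldots,T_n^*$ commute among themselves, so the order of the factors in $\prod_{i\in\Lambda}T_i^{*\alpha_i}$ does not matter. Also, $\clm_\Lambda$ is a closed subspace, so by continuity and linearity it suffices to test each operator on the spanning vectors $x=\big(\prod_{i\in\Lambda}T_i^{*\alpha_i}\big)h$ with $h\in\clm$.

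\emph{Invariance under $T_j^*$.} We have $T_j^*x=\big(\prod_{i\in\Lambda}T_i^{*\beta_i}\big)h$ with $\beta_j=\alpha_j+1$ and $\beta_i=\alpha_i$ for $i\neq j$. This is again a spanning vector, so $T_j^*\clm_\Lambda\subseteq\clm_\Lambda$.

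\emph{Invariance under $T_j$.} For $i\neq j$ the tuple is doubly commuting, so $T_j$ commutes with $T_i^*$. Hence
\[
T_jx=R\,\big(T_jT_j^{*\alpha_j}h\big),\qquad R:=\prod_{i\in\Lambda\setminus\{j\}}T_i^{*\alpha_i}.
\]
Now we use admissibility of $T_j$. The operator $T_jT_j^{*\alpha_j}$ lies in $\clb(\clh)=\cln(T_j)$, so there is a net $(S_\lambda)$ in $\cls(T_j)$ with $S_\lambda\to T_jT_j^{*\alpha_j}$ in WOT. Each $S_\lambda$ is a finite linear combination of operators $T_j^{*p}T_j^{q}$. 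Because $\clm$ is $T_j$-invariant, $T_j^qh\in\clm$, and therefore
\[
RS_\lambda h\in\operatorname{span}\Big\{\Big(\prod_{i\in\Lambda}T_i^{*\gamma_i}\Big)g: g\in\clm,\ \gamma_i\in\Z_+\Big\}\subseteq\clm_\Lambda .
\]
Left multiplication by the bounded operator $R$ preserves WOT convergence, so $RS_\lambda h\to RT_jT_j^{*\alpha_j}h=T_jx$ weakly. Since $\clm_\Lambda$ is a closed subspace, it is weakly closed, and so $T_jx\in\clm_\Lambda$.

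The only step needing care is this last approximation. We must make sure the approximants stay inside $\clm_\Lambda$, which uses the $T_j$-invariance of $\clm$ together with the fact that $\cls(T_j)$ places the powers of $T_j$ to the right of the powers of $T_j^*$. We must also pass to the weak limit through $R$, which works because a closed subspace is weakly closed.

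The doubly commuting hypothesis is used only to move $T_j$ past $T_i^*$ for $i\neq j$. Admissibility is used only to express $T_jT_j^{*\alpha_j}$ as a WOT-limit of hereditary polynomials in $T_j$.
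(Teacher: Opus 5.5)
Your proof is correct and follows essentially the same route as the paper: you move $T_j$ past $\prod_{i\in\Lambda\setminus\{j\}}T_i^{*\alpha_i}$ using double commutativity, then approximate $T_jT_j^{*\alpha_j}$ in WOT by elements of $\cls(T_j)$, whose action on $h$ lands in $\clm_\Lambda$ by $T_j$-invariance of $\clm$, and conclude by weak closedness of $\clm_\Lambda$. You are slightly more explicit than the paper about why the approximants lie in $\clm_\Lambda$ and why their weak limit stays there.
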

\begin{proof}
It is easy to see that $T_i^*(\clm_\Lambda)\subseteq \clm_\Lambda$ for all $i\in \Lambda.$ To complete the proof, it suffices to show that $T_i(\clm_\Lambda)\subseteq \clm_\Lambda$ for all $i\in \Lambda$.
Now we fix $i\in \Lambda$ and consider a generating vector
\[
x= XT_i^{* \alpha_i} h \in \clm_\Lambda, \quad 
X=\prod_{j\in \Lambda\setminus\{i\}}T_j^{* \alpha_j} \mbox{ and } h\in\clm.
\]
Since $T$ is doubly commuting, $T_ix=X(T_iT_i^{*\alpha_i})h$. Since $T_i$ is admissible, there exists a net $F_\lambda \in \cls(T)$ (see \eqref{stnt}) such that
\begin{align*} 
 F_\lambda=\sum_{p,r}c_{p,r}^{(\lambda)}T_i^{* p}T_i^{r} \to T_i T_i^{* \alpha_i}\quad \text{in WOT},   
\end{align*}
where $c_{p,r}^{(\lambda)}\in \C$. Thus, we have 
\begin{align*}
     XF_\lambda h \rightarrow X (T_iT_i^{*\alpha_i})h.
\end{align*}
This together with $X F_\lambda h\in \clm_\Lambda$ shows that 
\[T_i x =X(T_iT_i^{*\alpha_i})h\in \clm_\Lambda.\]
The rest of the proof follows since $T_i$ is bounded and $i$ is arbitrary.
\end{proof}

 \noindent Let $\clh_i$ be Hilbert spaces for $i=1, \ldots, n$. Consider $\fH=\clh_1 \otimes \cdots \otimes \clh_n.$
 Let $T_i \in \mathcal{B}(\clh_i)$ and $\tilde{T}_i$ be as in Setup \ref{setup}. Given an auxiliary Hilbert space $\clk$, we define
 \[Y_i:\clk \otimes \fH \to \clk \otimes \fH \quad \text{by}\quad Y_i:=I_{\clk} \otimes \tilde T_i, \quad (i=1,\ldots, n).\]
 Then it is easy to see that $(Y_1, \ldots, Y_n)$ on $\clk \otimes \fH$ is a doubly commuting tuple.

The following result characterizes jointly $(Y_1,\ldots,Y_n)$-reducing subspaces of $\clk \otimes \fH$ under the assumption that each $T_i$ is an irreducible operator on $\clh_i$ for $i=1,\ldots,n$.

\begin{proposition} \label{reducingirr}
 Let $T_i$ be an irreducible operator on $\clh_i$, for all $i=1,\ldots,n$. Then, with the notation as above, a closed subspace $\fM$ of $\clk \otimes \fH$ is jointly reducing under $(Y_1,\ldots,Y_n)$ if and only if there exist a closed subspace $\mathcal L$ of $\mathcal K$ such that $$\fM=\mathcal L \otimes \fH.$$
\end{proposition}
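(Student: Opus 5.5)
The converse is immediate: if $\fM=\mathcal L\otimes\fH$, then $Y_i=I_\clk\otimes\tilde T_i$ and $Y_i^*=I_\clk\otimes\tilde T_i^*$ map $\mathcal L\otimes\fH$ into itself. The substance is the forward direction.

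For the forward direction, I will work with the orthogonal projection $P:=P_{\fM}$. Since $\fM$ is jointly reducing under $(Y_1,\ldots,Y_n)$, $P$ commutes with every $Y_i$ and $Y_i^*$. The aim is to show that $P$ lies in $\clb(\clk)\otimes\C I_{\fH}$.

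First, $\clk\otimes\fH\cong\clk\otimes\clh_1\otimes\cdots\otimes\clh_n$, and the von Neumann algebra generated by $\{Y_i,Y_i^*:1\le i\le n\}$ is the one generated by
\[
\C I_\clk\otimes W^*(T_1)\otimes\cdots\otimes W^*(T_n),
\]
where $W^*(T_i)$ is the von Neumann algebra generated by $T_i$. By Lemma \ref{commutator}, irreducibility of $T_i$ gives $W^*(T_i)'=\C I_{\clh_i}$, so the bicommutant theorem yields $W^*(T_i)=\clb(\clh_i)$. The generated algebra is therefore $\C I_\clk\otimes\clb(\clh_1)\bar\otimes\cdots\bar\otimes\clb(\clh_n)=\C I_\clk\otimes\clb(\fH)$.

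Its commutant is $\clb(\clk)\otimes \C I_{\fH}$, by the commutation theorem for tensor products, which is elementary in this case. To see this directly, fix an orthonormal basis $\{e_j\}$ of $\clk$ and write $\clk\otimes\fH=\bigoplus_j \fH$. An operator $C$ commuting with $I_\clk\otimes B$ for all $B\in\clb(\fH)$ has block entries $C_{jk}\in\clb(\fH)$, each of which commutes with all of $\clb(\fH)$ and is therefore scalar. Hence $C=D\otimes I_\fH$ for some $D\in\clb(\clk)$, and boundedness of $D$ follows from that of $C$.

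Applying this to $P$ gives $P=D\otimes I_\fH$. Since $P$ is a projection, $D$ must be a projection as well, say $D=P_{\mathcal L}$, and then $\fM=\mathcal L\otimes\fH$.

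The main point needing care is the passage from "the $Y_i$ generate $\C I\otimes\clb(\fH)$" to the commutant computation. To avoid invoking Tomita's commutation theorem, I would argue directly with matrix entries one factor at a time. Fix unit vectors $u,v$ in $\clk\otimes\clh_1\otimes\cdots\otimes\clh_{n-1}$ and consider the slice operator $\Phi_{u,v}(P)$ on $\clh_n$. Because $P$ commutes with $Y_n$ and $Y_n^*$, this slice commutes with $T_n$ and $T_n^*$, so it is scalar by Lemma \ref{commutator}. Consequently $P=P'\otimes I_{\clh_n}$ for some $P'$ on the remaining factors. Inducting on $n$ then completes the argument.
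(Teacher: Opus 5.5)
Your argument is correct and is essentially the paper's proof: Lemma \ref{commutator} and the bicommutant theorem give $W^*(T_i)=\clb(\clh_i)$, hence $P_{\fM}\in(I_{\clk}\otimes\clb(\fH))'=\clb(\clk)\otimes I_{\fH}$, and $P_{\fM}=X\otimes I_{\fH}$ with $X$ a projection. The differences are minor: you verify the commutant identity by a block-matrix computation where the paper cites Takesaki's Theorem IV.5.9, and your slice-map induction needs only that each slice commutes with $T_n$ and $T_n^*$, so it avoids the bicommutant step altogether.
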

\begin{proof}
Let $\fM$ be a jointly reducing subspace of $\clk \otimes \fH$ under $(Y_1,\ldots,Y_n)$. Then $$P_\fM Y_j=Y_jP_\fM \mbox{ and } P_\fM Y_j^*=Y_j^*P_\fM, $$
for all $j=1, \ldots, n$. Since, by Lemma \ref{commutator}, $\{ T_i,T_i^*\}'=\mathbb C I_{\clh_i}$ for each $i=1, \ldots, n$, it follows from double commutant Theorem \cite[Theorem II.3.9]{T1979} that von Neumann algebra generated by $\{I_{\clh_i}, T_i\}$ is equal to $\mathcal B(\clh_i)$ for each $i=1, \ldots, n.$ Thus $P_\fM \in (I_{\clk} \otimes \mathcal B(\fH))'.$ It now follows from \cite[Theorem IV.5.9]{T1979} that $$P_\fM \in \mathcal B(\mathcal K) \otimes I_{ \fH}.$$ Thus there exists an operator $X \in B(\clk)$ such that $P_\fM=X \otimes I_{\fH}.$ Since $P_\fM$ is an orthogonal projection, $X$ is also an orthogonal projection. Thus $\fM=\mathcal L \otimes \fH,$ where $\mathcal L=\operatorname{ran}X\subseteq \clk.$
\end{proof}
We now have all the necessary ingredients to prove our main result.
\begin{proof}[Proof of the \text{Theorem \ref{mainthm}}]
Suppose $\fM$ is a doubly commuting $(\tilde T_1, \ldots, \tilde T_n)$-invariant subspace of $\fH,$ that is, $$\tilde T_j|_{\fM}(\tilde T_i|_{\fM})^*=(\tilde T_i|_{\fM})^*\tilde T_j|_{\fM}$$
for $1 \leq i <j \leq n$ (see Setup \ref{setup}). Define
\begin{align} \label{m_f}
\fM_1:=\overline{\operatorname{span}} \{\tilde T_2^{*k_2}\cdots \tilde T_n^{*k_n} \fM: k_2, \ldots k_n \in \mathbb Z_+\}. 
\end{align}
Since $T_i,$ for $ i=2, \ldots, n$, are admissible, an application of Proposition \ref{admtostrng} combined with Theorem \ref{reducing result} implies that $\fM_1$ reduces $(\tilde T_2, \ldots, \tilde T_n).$ We now apply Proposition \ref{reducingirr} to $\clh_1 \otimes (\clh_2 \otimes \cdots\otimes \clh_n)$ so that there exists a closed subspace $\clm_1 \subseteq \clh_1$ such that $$\fM_1=\clm_1 \otimes \clh_2 \otimes \cdots \otimes \clh_n.$$
Since $\fM_1$ is $\tilde T_1$-invariant, it is easy to see that $\clm_1$ is a $T_1$-invariant subspace. We now show that $\fM$ reduces $\tilde T_1|_{\fM_1}=(T_1|_{\clm_1}) \otimes I_{\clh_2}\otimes \cdots \otimes I_{\clh_n}.$ This is equivalent to showing that 
\begin{align} \label{fmreducing}
    P_{\fM}(\tilde T_1|_{\fM_1})=(\tilde T_1|_{\fM_1})P_{\fM}.
\end{align}
Note that $$\tilde T_1|_{\fM}(\tilde T_j|_{\fM})^*=(\tilde T_j|_{\fM})^* \tilde T_1|_{\fM},$$ for each $j=2, \ldots, n.$
For any $k_2, \ldots, k_n \in \mathbb Z_+,$
\begin{align*}
\tilde T_1|_{\fM} \prod_{j=2}^n P_{\fM}\tilde T_j^{^*k_j}=\prod_{j=2}^n P_{\fM}\tilde T_j^{^*k_j}(\tilde T_1|_{\fM})=P_{\fM}\bigg(\prod_{j=2}^n \tilde T_j^{^*k_j}\tilde T_1\bigg)\bigg|_{\fM}=P_{\fM}\bigg(\prod_{j=2}^n \tilde T_1 \tilde T_j^{^*k_j}\bigg)\bigg|_{\fM}.
\end{align*}
This further yields that 
\begin{align*}
P_{\fM}(\tilde T_1|_{\fM_1}) \prod_{j=2}^n \tilde T_j^{^*k_j}|_{\fM}=P_{\fM}\tilde T_1 \prod_{j=2}^n \tilde T_j^{^*k_j}|_{\fM}=\tilde T_1|_{\fM} \prod_{j=2}^n P_{\fM}\tilde T_j^{^*k_j}|_{\fM}=\tilde T_1 P_{\fM}\bigg(\prod_{j=2}^n \tilde T_j^{^*k_j}\bigg)\bigg|_{\fM}.
\end{align*}
Now, for any $x \in \fM,$
\begin{align*}
P_{\fM}(\tilde T_1|_{\fM_1}) \prod_{j=2}^n \tilde T_j^{^*k_j}x=\tilde T_1 P_{\fM}\prod_{j=2}^n \tilde T_j^{^*k_j}x=\tilde T_1|_{\fM_1} P_{\fM}\prod_{j=2}^n \tilde T_j^{^*k_j}x.
\end{align*}
This combined with \eqref{m_f} yields \eqref{fmreducing}.
Since $T_1$ is admissible, by Proposition \ref{admtostrng}, $T_1|_{\clm_1}$ is irreducible. Thus, by Proposition \ref{reducingirr}, there exists a closed subspace $\mathfrak F$ of $\clh_2 \otimes \cdots \otimes \clh_n$ such that $$\fM=\clm_1\otimes \mathfrak F.$$ Since $\fM$ is jointly $(\tilde T_2, \ldots, \tilde T_n)$-invariant, it follows that $\mathfrak F$ is jointly $(\tilde T_2, \ldots, \tilde T_n)$-invariant. Note that for $2\leq i<j\leq n,$ we have 
$$\tilde T_i P_{\fM}\tilde T_j^*|_\fM=P_{\fM}\tilde T_j^*|_\fM\tilde T_i|_\fM.$$ Since $P_{\fM}=P_{\clm_1} \otimes P_{\mathfrak F},$ we have for each $2\leq i<j\leq n,$
\begin{align*}
\tilde T_i P_{\mathfrak F}\tilde T_j^*|_\mathfrak F=P_{\mathfrak F}\tilde T_j^*|_\mathfrak F\tilde T_i|_\mathfrak F.
\end{align*}
This shows that $\mathfrak F$ is a doubly commuting jointly $(\tilde T_2, \ldots, \tilde T_n)$-invariant subspace of $\clh_2 \otimes \cdots \otimes \clh_n.$ Now continuing in the similar manner, there exist closed subspaces $\clm_i\subseteq\clh_i$, each invariant under the corresponding operator $T_i$, for $i=2,\ldots,n$, such that $$\fM=\clm_1 \otimes \cdots \otimes \clm_n.$$ The converse part is trivial. This completes the proof.
\end{proof}
Upon examining the proof of Theorem \ref{mainthm}, one observes that the assumption of admissibility of $T_1$ can be weakened to the assumption that $T_1$ is fully irreducible. Although fully irreducibility is weaker than admissibility, the conclusion of the theorem remains valid. Thus, we have the following.
\begin{theorem} \label{weakmainthm}
 Let $T_i\in\mathcal B(\clh_i)$ be admissible, for $i=2, \ldots, n$ and $T_1$ is fully irreducible. Let $\fM$ be a jointly invariant subspace of $\fH$ under $(\tilde T_1, \ldots, \tilde T_n)$ (see Setup \ref{setup}). Then $\fM$ is doubly commuting if and only if there exist invariant subspaces $\mathcal M_i$ of $\clh_i$ under $T_i$, for each $i=1, \ldots, n$, such that $$\fM=\clm_1 \otimes \cdots \otimes \clm_n.$$    
\end{theorem}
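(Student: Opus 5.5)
The plan is to rerun the proof of Theorem \ref{mainthm}, tracking exactly where admissibility of $T_1$ enters. The converse direction is immediate, so assume $\fM$ is a doubly commuting jointly $(\tilde T_1,\ldots,\tilde T_n)$-invariant subspace.

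\textbf{Reducing in the last $n-1$ coordinates.} Define $\fM_1$ as in \eqref{m_f}, the closed span of $\tilde T_2^{*k_2}\cdots\tilde T_n^{*k_n}\fM$.
\begin{itemize}
\item For $i\ge 2$, $\tilde T_i=I\otimes(\text{operator on }\clh_2\otimes\cdots\otimes\clh_n)$, and the WOT-approximation of $T_iT_i^{*\alpha}$ by elements of $\cls(T_i)$ lifts to $\tilde T_i$. So Theorem \ref{reducing result} with $\Lambda=\{2,\ldots,n\}$ uses only the admissibility of $T_2,\ldots,T_n$. Hence $\fM_1$ reduces $\tilde T_2,\ldots,\tilde T_n$.
\item Each $T_i$ with $i\ge2$ is irreducible by Proposition \ref{admtostrng}. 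Proposition \ref{reducingirr}, with $\clk=\clh_1$, then gives $\fM_1=\clm_1\otimes\clh_2\otimes\cdots\otimes\clh_n$.
\item $\clm_1$ is $T_1$-invariant because $\fM_1$ is $\tilde T_1$-invariant: $\tilde T_1$ commutes with each $\tilde T_j^*$ and leaves $\fM$ invariant.
\end{itemize}
So far nothing about $T_1$ has been used.

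\textbf{The step involving $T_1$.} The commutation computation leading to \eqref{fmreducing} uses only the double commutativity of $\fM$. It shows that $\fM$ reduces $\tilde T_1|_{\fM_1}=T_1|_{\clm_1}\otimes I$ on $\clm_1\otimes(\clh_2\otimes\cdots\otimes\clh_n)$.
\begin{itemize}
\item At this point we need $T_1|_{\clm_1}$ to be irreducible whenever $\clm_1\neq\{0\}$.
\item This is precisely the definition of $T_1$ being fully irreducible (Definition \ref{fullyirredu}), so Proposition \ref{admtostrng} is replaced by the hypothesis.
\item If $\clm_1=\{0\}$, then $\fM=\{0\}$ and the conclusion holds trivially.
\end{itemize}
Proposition \ref{reducingirr}, applied with $n=1$ and the roles of the factors swapped, gives $\fM=\clm_1\otimes\mathfrak F$ for a closed subspace $\mathfrak F\subseteq\clh_2\otimes\cdots\otimes\clh_n$. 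The same computation as in the proof of Theorem \ref{mainthm} shows that $\mathfrak F$ is a doubly commuting jointly invariant subspace for the tuple $(T_2,\ldots,T_n)$.

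\textbf{Finishing the induction.} Here one must be careful: the original inductive step would next need $T_2$ to be fully irreducible and $T_3,\ldots,T_n$ admissible. Since $T_2,\ldots,T_n$ are all admissible, we apply Theorem \ref{mainthm} directly to $\mathfrak F$ in $\clh_2\otimes\cdots\otimes\clh_n$. This gives $\mathfrak F=\clm_2\otimes\cdots\otimes\clm_n$ with each $\clm_i$ a $T_i$-invariant subspace, and hence $\fM=\clm_1\otimes\cdots\otimes\clm_n$.

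\textbf{Main obstacle.} The main point is verifying that Theorem \ref{reducing result} and Proposition \ref{reducingirr} never require anything of $T_1$ beyond irreducibility of its restrictions to invariant subspaces. This holds because $\tilde T_1$ only enters through the double-commutation identity with the $\tilde T_j^*$, $j\ge2$.
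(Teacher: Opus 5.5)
Your proposal follows the paper's own argument: the paper proves Theorem~\ref{weakmainthm} by observing that, in the proof of Theorem~\ref{mainthm}, admissibility of $T_1$ is used only through Proposition~\ref{admtostrng} to make $T_1|_{\clm_1}$ irreducible. Your treatment of the case $\clm_1=\{0\}$ and of the inductive step (applying Theorem~\ref{mainthm} to $\mathfrak F$, since $T_2,\ldots,T_n$ are all admissible) makes explicit what the paper leaves to the reader.

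The one step you assert without proof is that WOT-approximation of $T_iT_i^{*\alpha}$ from $\cls(T_i)$ transfers to $\tilde T_i$. This is the same step the paper uses tacitly when it runs Theorem~\ref{reducing result} for the operators $\tilde T_i$, which in general are not admissible on $\fH$. Strictly speaking it needs $T_iT_i^{*\alpha}$ to lie in the $\sigma$-weak closure of $\cls(T_i)$ (for example, to be a limit of a bounded net from $\cls(T_i)$). The reason is that $A\mapsto I\otimes A\otimes I$ is $\sigma$-weakly continuous but not WOT-continuous when the other factors are infinite-dimensional. This stronger density does hold in all of the paper's concrete examples.
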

We now discuss an immediate application of Theorem \ref{mainthm}. Since $M_z^*$ on $H^2(\D)$ is an admissible operator (see Example \ref{M_z*-admissible}), with the notation of Setup \ref{setup}, setting $\clh_i=H^2(\D)$ and $T_i=M_z^*$ for $i=1,\ldots,n$, we recover the following result of \cite[Theorem 3.2]{S} as a consequence of Theorem \ref{mainthm}.
\begin{corollary} \label{hardyresultrecovery}
    Let $\fQ$ be a quotient module of $H^2(\D^n)$. Then $\fQ$ is doubly commuting if and only if there exist quotient modules $\clq_1, \ldots, \clq_n$ of $H^2(\D)$ such that 
    \[\fQ=\clq_1\otimes\cdots\otimes\clq_n.\]
\end{corollary}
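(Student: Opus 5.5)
This corollary should follow by specializing Theorem \ref{mainthm} to $T_i=M_z^*$ and translating between the two notions of invariance. Take $\clh_i=H^2(\D)$ and $T_i=M_z^*$ for every $i=1,\ldots,n$ in Setup \ref{setup}. Under the natural unitary identification $H^2(\D^n)\cong H^2(\D)\otimes\cdots\otimes H^2(\D)$, which sends $\z^{\bm k}$ to $z_1^{k_1}\otimes\cdots\otimes z_n^{k_n}$, the operator $M_{z_i}$ corresponds to $I\otimes\cdots\otimes M_z\otimes\cdots\otimes I$. Taking adjoints, $M_{z_i}^*$ corresponds exactly to $\tilde T_i$.

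Under this identification, a quotient module $\fQ$ of $H^2(\D^n)$, being jointly $(M_{z_1}^*,\ldots,M_{z_n}^*)$-invariant, is precisely a jointly $(\tilde T_1,\ldots,\tilde T_n)$-invariant closed subspace of $\fH$. I read ``doubly commuting quotient module'' as meaning that $(M_{z_1}^*|_{\fQ},\ldots,M_{z_n}^*|_{\fQ})$ is doubly commuting. This is exactly the definition of a doubly commuting jointly $(\tilde T_1,\ldots,\tilde T_n)$-invariant subspace in the paper.

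If the intended convention is instead the compressions $P_\fQ M_{z_i}|_\fQ$, nothing changes, because $(P_\fQ M_{z_i}|_\fQ)^*=M_{z_i}^*|_\fQ$ when $\fQ$ is co-invariant. The family of the $n$ operators together with their adjoints is the same set under either convention, so double commutativity is equivalent in the two readings.

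By Example \ref{M_z*-admissible}, $M_z^*$ is admissible on $H^2(\D)$, so every $T_i$ satisfies the hypothesis of Theorem \ref{mainthm}. The theorem then gives that $\fQ$ is doubly commuting if and only if $\fQ=\clm_1\otimes\cdots\otimes\clm_n$, where each $\clm_i$ is $M_z^*$-invariant. An $M_z^*$-invariant closed subspace is exactly a quotient module $\clq_i$ of $H^2(\D)$, which is the claimed description.

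The only real work is the bookkeeping check that the adjoint conventions match; admissibility has already been supplied by Example \ref{M_z*-admissible}.
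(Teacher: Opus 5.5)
Your proposal is correct and is exactly the paper's argument: the paper takes $\clh_i=H^2(\D)$ and $T_i=M_z^*$ in Setup~\ref{setup}, gets admissibility of $M_z^*$ from Example~\ref{M_z*-admissible}, and applies Theorem~\ref{mainthm}. Your extra check is also correct and makes explicit a point the paper leaves implicit: doubly commuting for the restrictions $M_{z_i}^*|_{\fQ}$ is equivalent to doubly commuting for the compressions $P_{\fQ}M_{z_i}|_{\fQ}$, because these are mutual adjoints and the doubly commuting condition is preserved when every entry is replaced by its adjoint.
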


We now show that fully irreducibility is a necessary condition to have an affirmative answer to Question \ref{question1}.

\begin{proposition} \label{dctosi}
    Consider $\mathcal H_i, \fH, T_i$ and $\tilde T_i$ for each $i=1, \ldots, n$ as in Setup \ref{setup}. Assume that for each doubly commuting jointly $(\tilde T_1, \ldots, \tilde T_n)$-invariant subspace $\mathfrak{M}$ of $\mathfrak{H}$, there exist invariant subspace $\clm_i$ of $\clh_i$ under $T_i$ for each $i=1, \ldots, n$ such that $\mathfrak M=\clm_1 \otimes \cdots \otimes \clm_n.$ Then each $T_i$ is fully irreducible for $i=1, \ldots, n$.
\end{proposition}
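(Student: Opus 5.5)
The plan is to argue by contraposition. Assume, say, that $T_1$ is not fully irreducible; by symmetry of the roles of the indices this suffices. We will build a doubly commuting jointly $(\tilde T_1,\ldots,\tilde T_n)$-invariant subspace of $\fH$ that is not of the form $\clm_1\otimes\cdots\otimes\clm_n$. Implicitly $n\geq 2$ and the other factors are nontrivial; otherwise every invariant subspace is trivially a tensor product and the statement is vacuous.

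\textbf{Construction.} Since $T_1$ is not fully irreducible, there is a nonzero closed $T_1$-invariant subspace $\clm\subseteq\clh_1$ with $T_1|_{\clm}$ reducible. So $\clm=\clm'\oplus\clm''$ with $\clm'$ and $\clm''$ nonzero, and both invariant under $A:=T_1|_{\clm}$ and $A^*$. Write $\clk:=\clh_2\otimes\cdots\otimes\clh_n$. Let $\mathcal L:=\clm_2\otimes\clh_3\otimes\cdots\otimes\clh_n$, where $\clm_2$ is a nonzero proper closed $T_2$-invariant subspace of $\clh_2$. Define
\[
\fM:=(\clm'\otimes\clk)\oplus(\clm''\otimes\mathcal L).
\]
The two summands are orthogonal because $\clm'\perp\clm''$.

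\textbf{Invariance and double commutativity.} On $\fM$ we have $\tilde T_1|_{\fM}=(A|_{\clm'}\otimes I_{\clk})\oplus(A|_{\clm''}\otimes I_{\mathcal L})$, because $A$ leaves $\clm'$ and $\clm''$ invariant. Since these subspaces reduce $A$, the adjoint of this restriction is $(A|_{\clm'}^*\otimes I)\oplus(A|_{\clm''}^*\otimes I)$. For $j\ge 2$, $\tilde T_j|_{\fM}$ is $I\otimes(\text{the $j$-th operator on }\clk)$ on the first summand and $I\otimes(\text{its restriction to }\mathcal L)$ on the second. Here $\mathcal L$ is invariant, being a product of invariant subspaces.

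Everything then splits along the orthogonal decomposition. On each summand the operators are tensor products acting in different tensor slots, or restrictions to product subspaces, and these are doubly commuting, as the paper notes for hereditary subspaces. Hence $\fM$ is jointly invariant and doubly commuting.

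\textbf{Not a tensor product.} Suppose $\fM=\cln_1\otimes\mathfrak F$. Since $\clm'\otimes\clk\subseteq\fM$, we get $\clm'\subseteq\cln_1$ and $\clk\subseteq\mathfrak F$, so $\mathfrak F=\clk$. Then $\clm''\otimes\clk\subseteq\fM$, because $\clm''\subseteq\cln_1$ (as $\fM\neq\clm'\otimes\clk$). This contradicts the fact that the part of $\fM$ lying in $\clm''\otimes\clk$ is exactly $\clm''\otimes\mathcal L\subsetneq\clm''\otimes\clk$. The symmetric argument handles any other index $i$.

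\textbf{Main obstacle.} The hardest step is producing the nonzero proper $T_2$-invariant subspace $\clm_2$.

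When $\clh_2$ is finite-dimensional with $\dim\clh_2\geq 2$, an eigenvector of $T_2$ gives $\clm_2$.

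In general, I would first try natural candidates: $\ker T_2$, $\overline{\operatorname{ran}}\,T_2^k$, or a cyclic subspace $\overline{\operatorname{span}}\{T_2^k x\}$ for a non-cyclic vector $x$. If all of these fail, so that $T_2$ has no nontrivial invariant subspace, I would shift the role of the factor. One option is to replace $\clm_2$ by a nontrivial jointly invariant subspace of $\clk$ for $(\tilde T_2,\ldots,\tilde T_n)$ that is a product. Another is to use the hypothesis in the other direction, noting that the needed invariant subspaces must exist wherever the assumed hereditary description is meant to be non-vacuous. This existence issue is where I expect the real care to be needed.
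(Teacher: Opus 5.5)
Your subspace $\fM=(\clm'\otimes\clk)\oplus(\clm''\otimes\mathcal L)$ is exactly the one the paper uses, and your checks are right. Both summands are invariant under every $\tilde T_j$, so they reduce $\tilde T_j|_{\fM}$. Each summand is a product of invariant subspaces. Your non-tensor argument is also sound: the inclusion $\clm''\subseteq\cln_1$ comes from $x\otimes y\in\cln_1\otimes\clk$ for $x\in\clm''$ and $0\neq y\in\mathcal L$. This is more careful than the paper's write-up. The paper allows arbitrary invariant $\clm_2,\ldots,\clm_n$ and simply asserts that $\fM$ is not a tensor product. But $\fM$ equals $\clm'\otimes\clk$ or $\clm\otimes\clk$ when $\clm_2\otimes\cdots\otimes\clm_n$ is $\{0\}$ or $\clk$.

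The gap is the one you flag. Everything rests on $\{0\}\neq\mathcal L\neq\clk$, that is, on some $T_j$ with $j\neq 1$ having a nonzero proper closed invariant subspace. None of your fallbacks yields one:
\begin{itemize}
\item kernels, closures of ranges and cyclic subspaces are all trivial when $T_2$ has no nonzero proper invariant subspace, since every nonzero vector is then cyclic;
\item a nonzero proper product subspace of $\clk$ again needs a nonzero proper invariant $\clm_j$;
\item for $n=2$ there is no other factor to turn to.
\end{itemize}

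Nor can the hypothesis of the proposition supply it, because the statement itself depends on this. Let $\clh_1=\C^2$ and $T_1=\operatorname{diag}(0,1)$, which is not fully irreducible since $T_1$ itself is reduced by $\C e_1$. Under $\C^2\otimes\clh_2\cong\clh_2\oplus\clh_2$ we have $\tilde T_1(x,y)=(0,y)$ and $\tilde T_2=T_2\oplus T_2$. So $(x,y)\in\fM$ forces $(0,y),(x,0)\in\fM$, and every jointly invariant $\fM$ is $\fM_1\oplus\fM_2$ with $\fM_1,\fM_2$ closed and $T_2$-invariant. Suppose $T_2$ has only the trivial invariant subspaces, for instance $\clh_2=\C$. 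Then every jointly invariant subspace is $\clm_1\otimes\clh_2$ with $\clm_1\in\{\{0\},\C e_1,\C e_2,\C^2\}$, so the hypothesis holds and the conclusion fails.

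Consequences:
\begin{itemize}
\item The cases you call vacuous ($n=1$, or one-dimensional factors) are genuine counterexamples, not vacuous cases.
\item For infinite-dimensional $\clh_2$, a proof of the proposition as stated would show that every operator on a separable infinite-dimensional Hilbert space has a nonzero proper closed invariant subspace. That is, it would settle the invariant subspace problem.
\end{itemize}

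So the missing step cannot be supplied by more care; an extra hypothesis is needed. For example, assume that for each $i$ some $T_j$ with $j\neq i$ has a nonzero proper closed invariant subspace. This is automatic when $2\leq\dim\clh_j<\infty$, and it holds for all the concrete operators in the paper. Under that assumption, putting such a $\clm_j$ in the $j$-th slot of $\mathcal L$ completes your argument, which is then the paper's.
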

\begin{proof}
   Suppose, towards a contradiction, $T_1$ is not fully irreducible. Then there exists a proper nontrivial invariant subspace $\mathcal W_1$ of $\clh_1$ under $T_1$ such that $T_1|_{\clw_1}$ is reducible. Then there exists a proper nontrivial closed invariant subspace $\widetilde{\clw}_1$ of $\clw_1$ such that $$T_{\clw_1}(\widetilde{\clw}_1) \subseteq \widetilde{\clw}_1, \quad{and}\quad T_{\clw_1}(\clw_1 \ominus\widetilde{\clw}_1) \subseteq \clw_1 \ominus\widetilde{\clw}_1.$$ It is now easy to see that for invariant subspaces $\clm_i$ of $\clh_i$ under $T_i$ for $i=2, \ldots, n$, the subspace $(\clw_1\ominus\widetilde{\clw}_1) \otimes\clm_2\otimes \cdots\otimes\clm_n$ is doubly commuting and jointly invariant under $(\widetilde T_1, \ldots, \tilde T_n).$ Now consider,
   $$\mathfrak{M}= \big(\widetilde{\clw}_1 \otimes\clh_2\otimes \cdots\otimes\clh_n\big)\oplus\big((\clw_1\ominus\widetilde{\clw}_1) \otimes\clm_2\otimes \cdots\otimes\clm_n\big).$$
   Clearly, $\mathfrak{M}$ is a doubly commuting jointly $(\tilde T_1, \ldots, \tilde T_n).$-invariant subspace. However, it is easy to see that it is not of the tensor product form. Thus $T_1$ is fully irreducible. Using the similar technique, it follows that each $T_i$ is fully irreducible for $i=2, \ldots, n.$
\end{proof}
We end this section by showing that the fully irreducibility of each $T_i,~~  i=1, \ldots, n$ may not yield the desired conclusion.
\begin{remark}
Although Proposition \ref{dctosi} may suggest that the admissibility criteria in Theorem \ref{mainthm} can be relaxed to the fully irreducibility of each $T_i, ~~i=1,\ldots, n$, this is not the case. To see this, take the shift operator $M_z$ on $H^2(\D)$. The operator $M_z$ is fully irreducible. Doubly commuting joint $(M_{z_1},\ldots,M_{z_n})$-invariant subspaces of $H^2(\D^n)$ may not be hereditary. In fact, they are of the form $qH^2(\D^n)$, for some inner function $q$ on $\D^n$ (see \cite{M}). If $q$ is not a variable separating function then $qH^2(\D^n)$ is not hereditary. For example, consider $n=2$ and $q(z_1,z_2)=\frac{z_1z_1-a}{1-az_1z_2}$ for $0<a<1.$ 
\end{remark}
\section{Applications of the Main Theorem to de Branges–Rovnyak spaces}\label{Hb}
Let $b\in H^{\infty}_1(\D)$, the closed unit ball in the Banach algebra $H^{\infty}(\D)$ of all bounded analytic functions on $\D$. The space $H(b)$ (for an exposition, see \cite{FM2}) is the de Branges–Rovnyak space associated with $b$ corresponding to the reproducing kernel
\begin{align} \label{debrangeskernel}
  K^b(z,w):=k^b_w(z)=(1-b(z)\overline{b(w)})\mathbb S_w(z), \mbox{ where } \mathbb S_w(z)=\frac{1}{1-z\overline w} \quad (z,w\,\in\D).  
\end{align}
The space $H(b)$ can be viewed as a subspace (not necessarily closed) of $H^2(\D)$ as a vector space. It follows from \cite[Theorem 18.13]{FM2} that $H(b)$ is invariant under the backward shift operator $M_z^*$ on $H^2(\D)$. So, \[X_b:H(b) \to H(b), \quad \text{defined by }\quad X_b:=M^*_z|_{H(b)}\] 
is a well defined operator. In this section, we show that for a nonconstant extreme point $b$, both $X_b$ and $X_b^*$ are admissible, and for the nonextreme   points $b$, we give a condition under which $X_b$ is admissible. Then we characterize doubly commuting jointly invariant subspaces of the tensor product of $H(b)$ spaces under the tuples as in Setup \ref{setup}. 

The following lemma will be used in the sequel.
\begin{lemma}\label{k_0-cyclic vector}
    Let $b\in H^{\infty}_1(\D)$. Then \[
\overline{\operatorname{span}}
\{X_b^{*n}k_0^b:n\geq0\}
= H(b).
\]
\end{lemma}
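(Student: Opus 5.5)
We have to show that the vectors $X_b^{*n}k_0^b$, $n\ge 0$, span a dense subspace of $H(b)$. The plan is to reduce this to a statement about reproducing kernels: a vector $f\in H(b)$ orthogonal to all $X_b^{*n}k_0^b$ has all its Taylor coefficients at $0$ equal to zero, hence $f=0$.

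First, I take $f\in H(b)$ with $\langle f, X_b^{*n}k_0^b\rangle_{H(b)}=0$ for every $n\geq 0$. Moving the adjoint across gives
\[
\langle X_b^n f, k_0^b\rangle_{H(b)}=0 \quad (n\ge 0).
\]
By the reproducing property, $\langle g,k_0^b\rangle_{H(b)}=g(0)$ for every $g\in H(b)$. Also $X_b^n=M_z^{*n}|_{H(b)}$, where $M_z^*$ acts on the underlying function by $(M_z^*g)(z)=(g(z)-g(0))/z$; this makes sense because $H(b)\subseteq H^2(\D)$ as a vector space and $H(b)$ is invariant under $M_z^*$ by \cite[Theorem 18.13]{FM2}. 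Iterating, $(M_z^{*n}f)(0)$ is the $n$-th Taylor coefficient $\widehat f(n)$ of $f$. So the orthogonality condition says $\widehat f(n)=0$ for all $n$, hence $f\equiv 0$ as a holomorphic function on $\D$. Since the elements of $H(b)$ are functions on $\D$, this gives $f=0$ in $H(b)$, and the span is dense.

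The only point needing care is that the operation $X_b$ on $H(b)$ really coincides, on the level of functions, with the Hardy space backward shift, so that evaluation at $0$ after $n$ applications gives the $n$-th coefficient. This holds because $X_b$ is defined as the restriction of $M_z^*$, with no renormalisation involved; only the norm of $H(b)$ differs from that of $H^2(\D)$, and the argument never uses the norm. If one wants this fully explicit, the identity $z^n(M_z^{*n}f)(z)=f(z)-\sum_{k<n}\widehat f(k)z^k$ holds pointwise on $\D$. I expect no genuine obstacle beyond recording this identification; the remaining steps are immediate from the reproducing property.
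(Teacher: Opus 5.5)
Your proof is correct and is essentially the paper's argument. In both, orthogonality to every $X_b^{*n}k_0^b$ gives $(X_b^nf)(0)=\langle f,X_b^{*n}k_0^b\rangle_{H(b)}=0$ by the reproducing property at the origin, and this value is the $n$-th Taylor coefficient of $f$, so $f=0$.
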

\begin{proof}
Let $f(z)=\sum_{j=0}^{\infty}a_jz^j \perp _b\overline{\operatorname{span}}
\{X_b^{*n}k_0^b:n\geq0\}.$ Then, since $k_0^b$ is the reproducing kernel of $ H(b)$ at the
origin (see \eqref{debrangeskernel}), 
\begin{align} \label{coeffi-zero}
(X_b^nf)(0)=\langle X_b^nf,k_0^b\rangle_{ H(b)}=\langle f,X_b^{*n}k_0^b\rangle_{H(b)}=0,   
\end{align}
for all $n\in\Z_+$. On the other hand, it follows that 
$X_b^nf(z)
=
\sum_{j=0}^{\infty}a_{j+n}z^j,$ and consequently $(X_b^nf)(0)=a_n.$ This combined with \eqref{coeffi-zero} yields that $a_n=0$ for every $n\geq0$, and therefore $f=0$. This completes the proof.
\end{proof}
\subsection{Doubly commuting invariant subspaces of tensor product of $H(b)$ spaces for extreme points $b$} \label{Hb_extreme}
Let $b\in H^\infty_1(\D)$ be a nonconstant extreme point.
 It follows from \cite[Corollary 18.23 and Theorem 25.11]{FM2}
 \begin{align}\label{bothdefect1}
  I-X^*_b X_b=k^b_0\otimes k_0^b  \quad \text{and}\quad I-X_bX^*_b= M^*_z b \otimes M^*_z b,    \end{align}
where $k^b_0=1-\bar{b(0)}b$ (see \eqref{debrangeskernel}). Also, from Lemma \ref{k_0-cyclic vector}, 
\[\overline{\text{span}}\{X_b^{*k}k^b_0: k\geq 0 \}= H(b).\]
Therefore, by Proposition \ref{cyclic_vector}, $X_b$ is an admissible operator.
On the other hand, by \cite[Corollary 26.18]{FM2}
\[
\overline{\operatorname{span}}
\{X_b^{n}(M^*_z b):n\geq0\}=\overline{\operatorname{span}}
\{M_z^{* n+1}(b):n\geq0\}=H(b).
\]
This combined with 
\eqref{bothdefect1} gives that $X^*_b$ is also an admissible operator on $H(b)$ (see Proposition \ref{cyclic_vector}). The preceding discussion yields the following.
\begin{proposition}
    If $b \in H^\infty_1(\D)$ is nonconstant extreme point, then $X_b$ and $X^*_b$ are admissible operators.
\end{proposition}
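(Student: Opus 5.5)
The statement follows from two applications of Proposition \ref{cyclic_vector}, one for $X_b$ and one for $X_b^*$. In each case we need a $*$-cyclic unit vector whose rank-one projection lies in the relevant WOT-closed span $\cln(\cdot)$. The defect identities \eqref{bothdefect1} give these projections essentially for free. Note that $I-X_b^*X_b=X_b^{*0}X_b^0-X_b^{*1}X_b^1\in\cls(X_b)\subseteq\cln(X_b)$. Likewise, since the roles of $T$ and $T^*$ swap when we pass to $X_b^*$,
\[
I-X_bX_b^*=(X_b^*)^{*0}(X_b^*)^0-(X_b^*)^{*1}(X_b^*)^1\in\cls(X_b^*)\subseteq\cln(X_b^*).
\]

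For $X_b$, I would use the first identity in \eqref{bothdefect1}, which says $k_0^b\otimes k_0^b\in\cln(X_b)$. First, $k_0^b\neq0$: otherwise $I-X_b^*X_b=0$, so $X_b$ would be an isometry, and this should contradict $b$ being nonconstant extreme. One could also observe directly that $k_0^b=1-\overline{b(0)}b\neq0$ because $|b(0)|<1$. Set $\bm e=k_0^b/\|k_0^b\|$. Then $P_{\C\bm e}=\|k_0^b\|^{-2}(k_0^b\otimes k_0^b)\in\cln(X_b)$. Lemma \ref{k_0-cyclic vector} gives $\overline{\operatorname{span}}\{X_b^{*n}k_0^b\}=H(b)$, so $\bm e$ is $*$-cyclic for $X_b$. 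Proposition \ref{cyclic_vector} then shows $X_b$ is admissible.

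For $X_b^*$, I would apply Proposition \ref{cyclic_vector} with $T=X_b^*$, so that $T^*=X_b$. Here $*$-cyclicity means that $\overline{\operatorname{span}}\{X_b^n\bm e\}=H(b)$. I would take $\bm e=M_z^*b/\|M_z^*b\|$. This is legitimate because $M_z^*b\neq0$, since $b$ is nonconstant. The second identity in \eqref{bothdefect1} places $P_{\C\bm e}$ in $\cln(X_b^*)$. The cyclicity $\overline{\operatorname{span}}\{X_b^n M_z^*b\}=\overline{\operatorname{span}}\{M_z^{*(n+1)}b\}=H(b)$ is exactly \cite[Corollary 26.18]{FM2} for extreme $b$. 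Hence $X_b^*$ is admissible.

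The main obstacle is not the deduction itself, which is formal once the cited inputs are granted, but the inputs: the exact form of the defect operators \eqref{bothdefect1} for extreme $b$, and the cyclicity of $M_z^*b$ under the backward shift in $H(b)$. Both rely on extremality, through the fact that $\log(1-|b|^2)\notin L^1$ and hence $z\notin H(b)$. I will take them from \cite{FM2} as the paper does, checking only the nonvanishing of the two cyclic vectors and the normalization in each case.
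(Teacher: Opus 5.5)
Your proof is correct and follows the paper's argument. The two defect identities in \eqref{bothdefect1} put the relevant rank-one projections in $\cls(X_b)$ and $\cls(X_b^*)$, and Lemma \ref{k_0-cyclic vector} together with \cite[Corollary 26.18]{FM2} supplies the $*$-cyclicity needed for Proposition \ref{cyclic_vector}; your normalizations and nonvanishing checks just make explicit what the paper leaves implicit.

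One side remark in your last paragraph is inaccurate, though nothing depends on it. Extremality does not force $z\notin H(b)$: for $b=z^2$ one has $H(b)=\operatorname{span}\{1,z\}$. Also, $I-X_bX_b^*=M_z^*b\otimes M_z^*b$ holds for every $b$, so in this argument extremality enters only through $I-X_b^*X_b=k_0^b\otimes k_0^b$ and the cyclicity of $M_z^*b$.
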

Combining the above result with Theorem \ref{mainthm}, we obtain the following result, which not only characterize the doubly commuting jointly invariant and co-invariant subspaces of tensor product of $H(b)$ spaces, but also provide a characterization of doubly commuting jointly mixed invariant subspaces in this setting.
\begin{theorem}\label{mixed}
Let $\Lambda\subseteq\{1,\ldots,n\}$ and $(R_1,\ldots,R_n)$ be a tuple of bounded operators on  $H(b_1)\otimes\cdots\otimes H(b_n)$, for some nonconstant extreme points $b_1,\ldots,b_n\in H_1^{\infty}(\D)$, such that 
\[
R_i:=\begin{cases}
    I_{H(b_1)}\otimes\cdots\otimes I_{H(b_{i-1})}\otimes X_{b_i}\otimes I_{H(b_{i+1})}\otimes\cdots\otimes I_{H(b_n)} & \text{for}\,i\in\Lambda\\
    I_{H(b_1)}\otimes\cdots\otimes I_{H(b_{i-1})}\otimes X_{b_i}^*\otimes I_{H(b_{i+1})}\otimes\cdots \otimes I_{H(b_n)} & \text{for}\,i\notin\Lambda.
\end{cases}
\]
Then, $\fM\subseteq H(b_1)\otimes\cdots\otimes H(b_n)$ is a doubly commuting jointly $(R_1,\ldots,R_n)$-invariant subspace if and only if 
\[
\fM=\clm_1\otimes\cdots\otimes\clm_n,
\]
where, for $i\in\Lambda$, $\clm_i$ is an $X_{b_i}$-invariant subspace of $H(b_i)$ and for $j\notin\Lambda$, $\clm_j$ is an $X_{b_j}^*$-invariant subspace of $H(b_j)$.    
\end{theorem}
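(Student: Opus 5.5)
The plan is to reduce Theorem \ref{mixed} directly to Theorem \ref{mainthm}. For each $i=1,\ldots,n$ we set $\clh_i:=H(b_i)$, and we put $T_i:=X_{b_i}$ if $i\in\Lambda$ and $T_i:=X_{b_i}^*$ if $i\notin\Lambda$. With the notation of Setup \ref{setup}, the operator $\tilde T_i$ is then precisely $R_i$ for every $i$. Hence a closed subspace $\fM\subseteq H(b_1)\otimes\cdots\otimes H(b_n)$ is jointly $(R_1,\ldots,R_n)$-invariant if and only if it is jointly $(\tilde T_1,\ldots,\tilde T_n)$-invariant. Being doubly commuting is a property of the restricted tuple, so it is also the same notion on both sides.

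Next we check the hypothesis of Theorem \ref{mainthm}, namely that each $T_i$ is admissible. The preceding proposition states that for a nonconstant extreme point $b$ both $X_b$ and $X_b^*$ are admissible. Since each $b_i$ is assumed to be a nonconstant extreme point, every $T_i$ is admissible, whichever of $X_{b_i}$ or $X_{b_i}^*$ it is. Note that admissibility of $T$ and of $T^*$ are genuinely different conditions: the spans $\{T^{*p}T^q\}$ and $\{T^pT^{*q}\}$ differ. The mixed case therefore really needs both halves of that proposition, and this is the only point that requires any care.

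Theorem \ref{mainthm} now gives the conclusion. A jointly invariant $\fM$ is doubly commuting if and only if $\fM=\clm_1\otimes\cdots\otimes\clm_n$ with each $\clm_i$ invariant under $T_i$. Unwinding the definition of $T_i$, this means $\clm_i$ is $X_{b_i}$-invariant for $i\in\Lambda$ and $X_{b_j}^*$-invariant for $j\notin\Lambda$. The converse direction is the trivial direction of Theorem \ref{mainthm}: such a tensor product is jointly invariant, and it is doubly commuting because the restrictions act on distinct tensor factors.

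There is no real obstacle here, since the whole content has already been established in the proposition giving admissibility of $X_b$ and $X_b^*$. That proposition rests on Proposition \ref{cyclic_vector}, the defect identities \eqref{bothdefect1}, and the cyclicity results, namely Lemma \ref{k_0-cyclic vector} and the fact that $M_z^*b$ is cyclic for $X_b$.
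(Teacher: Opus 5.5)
Your proposal is correct and is exactly the paper's argument. You set $T_i=X_{b_i}$ for $i\in\Lambda$ and $T_i=X_{b_i}^*$ for $i\notin\Lambda$, so that $\tilde T_i=R_i$, and then invoke Theorem \ref{mainthm} using the preceding proposition that both $X_b$ and $X_b^*$ are admissible for every nonconstant extreme point $b$, which relies on both defect identities in \eqref{bothdefect1} together with the corresponding cyclicity facts.
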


In particular, if $b=\theta$ is a nonconstant inner function, then $H(b)$ coincides with the quotient module $\clq_\theta=H^2(\D)\ominus \theta H^2(\D)$ of $H^2(\D)$ (see \cite[Theorem 18.1]{FM2}). We denote this quotient module by $\clq_\theta$. Consider the operator $X_{\theta}=M_z^*|_{\clq_\theta}$ (this is nothing but $S_\theta^*$ following the notation of \cite{SSS}). Therefore, considering nonconstant inner functions $\theta_1,\ldots,\theta_n$ and the Jordan block   $\clq_{\theta_1}\otimes\cdots\otimes\clq_{\theta_n}$,
taking $\Lambda=\emptyset$ and $X_{\theta_i}^*=S_{\theta_i}$ in Theorem \ref{mixed}, as a corollary we recover the following result of \cite[Theorem 3.2]{S}.
\begin{corollary}\label{SSS_Theorem}
    Let $\theta_1,\ldots,\theta_n$ be nonconstant inner functions and let $\fM$ be an jointly invariant subspace of the Jordan block $\clq_{\theta_1}\otimes\cdots\otimes\clq_{\theta_n}$ under $(S_{\theta_1}, \ldots, S_{\theta_n})$. Then $\fM$ is doubly commuting if and only if there exist $S_{\theta_i}$-invariant subspaces $\clm_i$ of $\clq_{\theta_i}$, for each $i=1,\ldots, n$, such that
    \[\fM=\clm_1\otimes\cdots\otimes\clm_n.\]
\end{corollary}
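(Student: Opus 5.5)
This corollary is Theorem~\ref{mixed} in the special case $b_i=\theta_i$ and $\Lambda=\emptyset$, so the plan is mainly a matter of identifying the objects.

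First, for each nonconstant inner $\theta_i$, we use \cite[Theorem 18.1]{FM2}: $H(\theta_i)=\clq_{\theta_i}$, with the norm inherited from $H^2(\D)$. Hence $X_{\theta_i}=M_z^*|_{\clq_{\theta_i}}$, and we must identify its Hilbert space adjoint on $\clq_{\theta_i}$. For $f,g\in\clq_{\theta_i}$,
\[
\la X_{\theta_i}f,g\ra=\la f, M_z g\ra=\la f,P_{\clq_{\theta_i}}M_z g\ra .
\]
Therefore $X_{\theta_i}^*=P_{\clq_{\theta_i}}M_z|_{\clq_{\theta_i}}=S_{\theta_i}$.

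Second, we check that a nonconstant inner function is an extreme point of $H^\infty_1(\D)$. This holds because $\log(1-|\theta|)=-\infty$ a.e.\ on the circle, so $\int\log(1-|b|)=-\infty$, which is the extreme-point criterion in \cite{FM2}. Thus each $b_i=\theta_i$ satisfies the hypotheses of Theorem~\ref{mixed}.

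Third, we take $\Lambda=\emptyset$ in Theorem~\ref{mixed}. Then $R_i=I\otimes\cdots\otimes X_{\theta_i}^*\otimes\cdots\otimes I=I\otimes\cdots\otimes S_{\theta_i}\otimes\cdots\otimes I$ on $\clq_{\theta_1}\otimes\cdots\otimes\clq_{\theta_n}$. Theorem~\ref{mixed} then says that a jointly $(R_1,\ldots,R_n)$-invariant $\fM$ is doubly commuting if and only if $\fM=\clm_1\otimes\cdots\otimes\clm_n$, where each $\clm_i$ is $X_{\theta_i}^*=S_{\theta_i}$-invariant. This is exactly the claim, with the tuple $(S_{\theta_1},\ldots,S_{\theta_n})$ understood in the ampliated sense of Setup~\ref{setup}.

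There is no real obstacle here. The only point needing care is the adjoint identification $X_\theta^*=S_\theta$, which depends on $H(\theta)$ carrying the $H^2$ norm and not merely sitting inside $H^2$ as a set. As a fallback that avoids Theorem~\ref{mixed}, one could show directly that $S_\theta$ is admissible. The relation $I-S_\theta^*S_\theta=k_0^\theta\otimes k_0^\theta$ from \eqref{bothdefect1}, together with the cyclicity of $k_0^\theta$ from Lemma~\ref{k_0-cyclic vector}, lets Proposition~\ref{cyclic_vector} show that $X_\theta$ is admissible. The second defect identity $I-X_\theta X_\theta^*=M_z^*\theta\otimes M_z^*\theta$ and the cyclicity of $M_z^*\theta$ under $X_\theta$ then give admissibility of $X_\theta^*=S_\theta$. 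Theorem~\ref{mainthm} would then apply directly.
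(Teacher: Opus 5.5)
Your proposal is correct and is essentially the paper's argument: specialize Theorem~\ref{mixed} to $b_i=\theta_i$ with $\Lambda=\emptyset$, using that $H(\theta)=\clq_\theta$ isometrically and that $X_\theta^*=S_\theta$. Your explicit checks of the adjoint and of extremality fill in what the paper leaves implicit. One small slip in your fallback paragraph: since $X_\theta^*X_\theta=S_\theta S_\theta^*$, the first identity in \eqref{bothdefect1} reads $I-S_\theta S_\theta^*=k_0^\theta\otimes k_0^\theta$, not $I-S_\theta^*S_\theta=k_0^\theta\otimes k_0^\theta$ (the latter operator equals $M_z^*\theta\otimes M_z^*\theta$), but this does not affect your main argument.
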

Now, for $m,n\in\N$ and given the nonconstant extreme points $b_1,\ldots,b_n\in H_1^{\infty}(\D)$, define the mixed space 
\[
\clh^2(b):=H^2(\D^m)\otimes H(b_1)\otimes\cdots\otimes H(b_n).
\]
Also define a operator tuple $(X_1,\ldots,X_{m}, Y_1,\ldots,Y_n)$ on $\clh^2(b)$ by
\[
X_i:=M_{z_i}\otimes I_{H(b_1)}\otimes\cdots\otimes I_{H(b_n)},
\]
for all $i=1,\ldots,m$ and 
\[
Y_i:= I_{H^2(\D^m)}\otimes R_i,
\]
for all $i=1,\ldots,n$, where $R_i$ are  defined as in Theorem \ref{mixed} corresponding a fixed set $\Lambda\subseteq \{1,\ldots,n\}$. It is worth mentioning that Theorem \ref{mixed} along with \cite[Proposition 6.1]{SSS} provides the following generalization of \cite[Theorem 6.2]{SSS}.
\begin{theorem} \label{mixedhd}
    Using the notations defined above, a subspace $\fM$ of $\clh^2(b)$ is doubly commuting invariant under $(X_1,\ldots,X_m,Y_1,\ldots,Y_n)$ if and only if 
   \[
    \fM=\varphi H^2(\D^m)\otimes\clm_1\otimes\cdots\otimes\clm_n,
    \]
    for some inner function $\varphi\in H^{\infty}(\D^m)$ and $Y_i$-invariant subspaces $\clm_i$ of $H(b_i)$, for $i=1,\ldots,n$. 
\end{theorem}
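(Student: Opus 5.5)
We split $\clh^2(b)=H^2(\D^m)\otimes\clk$ with $\clk:=H(b_1)\otimes\cdots\otimes H(b_n)$, and handle the Hardy block and the $H(b)$ block separately. The converse direction is routine. If $\fM=\varphi H^2(\D^m)\otimes\clm_1\otimes\cdots\otimes\clm_n$, then $\fM$ is invariant under all $X_i$ and $Y_j$. Multiplication by an inner $\varphi$ is an isometry intertwining $(M_{z_1},\ldots,M_{z_m})$ on $H^2(\D^m)$ with its restriction to $\varphi H^2(\D^m)$, so the restricted $M_{z_i}$ form a doubly commuting tuple. The restrictions $R_j|_{\clm_j}$ act on separate tensor factors, hence doubly commute with each other and with the $X_i|_\fM$. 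So $\fM$ is doubly commuting.

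For the forward direction, let $\fM$ be doubly commuting and jointly invariant.

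\textbf{Step 1 (the $H(b)$ block).} Consider the tuple $(Y_1,\ldots,Y_n)=(I\otimes R_1,\ldots,I\otimes R_n)$ on $H^2(\D^m)\otimes\clk$. Each $R_j$ is built from $X_{b_j}$ or $X_{b_j}^*$, and both are admissible by the proposition preceding Theorem \ref{mixed}. Mimic the proof of Theorem \ref{mainthm}, treating $H^2(\D^m)$ as an inert auxiliary factor.
\begin{itemize}
\item Form $\fM_1:=\overline{\operatorname{span}}\{Y_2^{*k_2}\cdots Y_n^{*k_n}\fM\}$. By Theorem \ref{reducing result} and Proposition \ref{admtostrng}, $\fM_1$ reduces $Y_2,\ldots,Y_n$.
\item Apply Proposition \ref{reducingirr}, with auxiliary space $H^2(\D^m)\otimes H(b_1)$, to get $\fM_1=\cln_1\otimes H(b_2)\otimes\cdots\otimes H(b_n)$.
\end{itemize}
We cannot simply peel off $\clm_1$ as before, because $Y_1$ now acts on a factor mixed with the auxiliary space. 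So we peel factors in a different order.

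Since $\fM$ is doubly commuting with respect to every $Y_j$, the full argument of Theorem \ref{mainthm}, applied with each $H(b_j)$ in turn playing the role of the "last" factor and the auxiliary space absorbing the rest, should give $\fM=\clm_j\otimes\fM^{(j)}$ for a $Y_j$-invariant $\clm_j$. This is precisely the content of \cite[Proposition 6.1]{SSS} in the Jordan-block setting, and I would reprove it in our setting using admissibility. The concrete move is:
\begin{itemize}
\item Show $\fM$ reduces $Y_j|_{\fM_j}$, where $\fM_j$ is the closed span of $\fM$ under adjoints of the other $Y$'s.
\item Show $R_j$ restricted to the corresponding factor is irreducible, via full irreducibility of admissible operators.
\item Apply Proposition \ref{reducingirr}.
\end{itemize}
Iterating over $j$ yields $\fM=\cls\otimes\clm_1\otimes\cdots\otimes\clm_n$ for some closed $\cls\subseteq H^2(\D^m)$.

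\textbf{Step 2 (the Hardy block).} Invariance of $\fM$ under the $X_i$ forces $\cls$ to be a submodule of $H^2(\D^m)$. Double commutativity of $(X_i|_\fM)$ transfers to double commutativity of $(M_{z_i}|_\cls)$, using $P_\fM=P_\cls\otimes P_{\clm_1}\otimes\cdots\otimes P_{\clm_n}$ as in the end of the proof of Theorem \ref{mainthm}. Mandrekar's theorem \cite{M} then gives $\cls=\varphi H^2(\D^m)$ for an inner $\varphi$.

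\textbf{Main obstacle.} The hard part is Step 1. Theorem \ref{reducing result} and Proposition \ref{reducingirr} must be applied with the non-admissible factor $H^2(\D^m)$ present. This works because Proposition \ref{reducingirr} allows an arbitrary auxiliary space $\clk$, and Theorem \ref{reducing result} only uses admissibility of the operators whose adjoints are being spanned. We also need that the cross double-commutation relations between $X_i$ and $Y_j$ are never used in Step 1, so the Hardy coordinates do not interfere.
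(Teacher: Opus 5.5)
The gap is in Step 1, at the irreducibility bullet. After you close $\fM$ under $\{Y_i^*: i\neq j\}$ and apply Proposition \ref{reducingirr} with auxiliary space $H^2(\D^m)\otimes H(b_j)$, you get $\fM_j=\cln_j\otimes\bigotimes_{i\neq j}H(b_i)$ with $\cln_j\subseteq H^2(\D^m)\otimes H(b_j)$. The operator that $\fM$ reduces is then $(I_{H^2(\D^m)}\otimes R_j)|_{\cln_j}\otimes I$. Full irreducibility of $R_j$ only concerns restrictions of $R_j$ to subspaces of $H(b_j)$, so it says nothing about $(I\otimes R_j)|_{\cln_j}$. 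That operator is reducible even in the expected answer: there $\cln_j=\varphi H^2(\D^m)\otimes\clm_j$, which gives $I\otimes R_j|_{\clm_j}$.

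Reordering the $j$'s does not help, because $H^2(\D^m)$ always sits in the auxiliary factor next to $H(b_j)$. To move it into the ``filled-out'' part you would also have to close up under the $X_i^*$. Theorem \ref{reducing result} is unavailable there, since $M_{z_i}$ is not admissible (Example \ref{M_z-not admi}).

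Your remark that the $X_i$--$Y_j$ cross relations are never used in Step 1 in fact shows Step 1 cannot succeed. Take $m=n=1$ and a nonzero proper $R_1$-invariant $\clm\subseteq H(b_1)$, e.g.\ $\clm=\C\bm 1$ for $b_1=z^2$, $R_1=X_{b_1}$. Then $\fM=(\C\bm 1\otimes\clm)\oplus(zH^2(\D)\otimes H(b_1))$ is invariant under $X_1$ and $Y_1$, and the double commutativity among the $Y$'s is vacuous. Yet $\fM$ is not of the form $\cls\otimes\clm_1$. What this $\fM$ violates is exactly the cross relation: evaluating $(X_1|_\fM)^*Y_1|_\fM=Y_1|_\fM(X_1|_\fM)^*$ on $z\otimes h$ forces $P_{\clm}R_1=R_1P_{\clm}$.

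The order that works is the reverse: peel off the Hardy block first, keeping the whole $H(b)$ block as the filled-out part, which is where admissibility is available.
\begin{enumerate}
\item Put $\mathcal R:=\overline{\operatorname{span}}\{Y_1^{*k_1}\cdots Y_n^{*k_n}\fM\}$. Theorem \ref{reducing result} with $\Lambda=\{1,\ldots,n\}$ and Proposition \ref{reducingirr} with auxiliary space $H^2(\D^m)$ give $\mathcal R=\cls_0\otimes\clk$. Here $\cls_0$ is a submodule because each $X_i$ commutes with every $Y_j^*$.
\item Run the computation from the proof of Theorem \ref{mainthm}, now fed with the cross relations $X_i|_\fM(Y_j|_\fM)^*=(Y_j|_\fM)^*X_i|_\fM$. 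It shows that $\fM$ reduces each $M_{z_i}|_{\cls_0}\otimes I_{\clk}$.
\item Combine this with double commutativity of the $X_i|_\fM$ and with the fact that $P_{\mathcal R}$ commutes with all $Y_j,Y_j^*$. Checking on the generators $Y_1^{*k_1}\cdots Y_n^{*k_n}x$, $x\in\fM$, shows that $(M_{z_1}|_{\cls_0},\ldots,M_{z_m}|_{\cls_0})$ is doubly commuting. Hence $\cls_0=\varphi H^2(\D^m)$ by \cite{M}.
\item This tuple is unitarily equivalent to the irreducible shift tuple on $H^2(\D^m)$. The tuple version of Proposition \ref{reducingirr} then yields $\fM=\varphi H^2(\D^m)\otimes\cls$, with $\cls\subseteq\clk$ jointly $(R_1,\ldots,R_n)$-invariant and doubly commuting.
\item Theorem \ref{mixed} finishes.
\end{enumerate}
So Mandrekar's theorem enters in the middle, applied to $\cls_0$, not at the end, and the cross relations are what drive it. The paper itself obtains the statement by combining Theorem \ref{mixed} with \cite[Proposition 6.1]{SSS}.
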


\subsection{Doubly commmuting invariant subspaces of tensor product of $H(b)$ spaces for nonextreme point $b$} \label{Hb_nonextreme}We now turn our attention to nonextreme points in $H^\infty_1(\mathbb D).$ First we recall a few properties of $H(b)$ which are frequently used.
Let $b$ be a nonextreme point of 
$H_1^\infty(\mathbb D)$, and let $a$ be the outer function associated with $b$ as in \cite[Section 23.1]{FM2}, where $a(0)>0$, and
\[
|a|^2+|b|^2=1\quad\text{a.e. on }\mathbb T,
\]
where $\mathbb T$ denotes the unit circle in $\mathbb C.$ The function $a$ is also called as the Pythagorean mate associated with $b$. For nonextreme $b$, we recall from \cite[Section 23.3]{FM2} that
\begin{align} \label{ffplusrela}
 f\in H(b) \text { if and only if there is } f^+\in H^2(\D) \text{ such that
} T_{\bar b}f=T_{\bar a}f^+,   
\end{align}
where $T_\psi$ denotes the operator of multiplication by $\psi \in H^\infty(\D)$ on $H^2(\D)$.
Then the following relation holds:
\begin{align} \label{inprodhb}
  \langle f, g \rangle_{H(b)}=\langle f, g \rangle_{H^2(\D)}+\langle f^+, g^+ \rangle_{H^2(\D)}.
\end{align}
Also, from \cite[Theorem 23.14]{FM2},
\begin{align} \label{defectD}
    D:=I-X_b^*X_b=k^b_0\otimes k_0^b+a(0)^2 b \otimes b.
\end{align} 
If $b$ is nonconstant, then $k_0^b$ and $b$ are linearly independent. Hence $I-X_b^*X_b$ is of rank $2.$
We begin by identifying a broad class of nonextreme points for which $X_b$ fails to be admissible.
\begin{lemma} \label{non-extre not adm}
    let $b \in H^\infty_1(\mathbb D).$ Set $\varphi:=b/a.$ If $\varphi \in H^\infty(\D)$ and there exist $\alpha$ and $\beta $ in $\mathbb D$ such that $1+\overline{\varphi(\alpha)}\varphi(\beta)=0.$ Then $X_b$ is not admissible.
\end{lemma}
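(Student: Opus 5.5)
The plan is to exhibit two nonzero vectors $x,y\in H(b)$ with $\langle X_b^{*p}X_b^{q}x,y\rangle_{H(b)}=0$ for all $p,q\in\Z_+$. By weak continuity this gives $\langle Bx,y\rangle_{H(b)}=0$ for every $B\in\cln(X_b)$. If $X_b$ were admissible, taking the rank-one operator $B=y\otimes x$ would give $\|x\|^2\|y\|^2=0$, which is impossible. This is exactly the mechanism of the proof of Proposition \ref{admtostrng}. The natural candidates for $x$ and $y$ are Szegő kernels $\mathbb S_\lambda$, since they are eigenvectors of the backward shift.

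\textbf{Step 1: describe $H(b)$ concretely when $\varphi=b/a\in H^\infty(\D)$.} Write $b=\varphi a$, so $T_{\bar b}=T_{\bar a}T_{\bar\varphi}$. For $f\in H^2(\D)$, the choice $f^+=T_{\bar\varphi}f\in H^2(\D)$ satisfies $T_{\bar b}f=T_{\bar a}f^+$. By \eqref{ffplusrela}, every $f\in H^2(\D)$ therefore lies in $H(b)$, so $H(b)=H^2(\D)$ as sets. Since $a$ is outer, $T_{\bar a}$ is injective, so $f^+$ is uniquely determined and equals $T_{\bar\varphi}f$. Then \eqref{inprodhb} gives
\[
\langle f,g\rangle_{H(b)}=\langle f,g\rangle_{H^2(\D)}+\langle T_{\bar\varphi}f,T_{\bar\varphi}g\rangle_{H^2(\D)} .
\]
I expect this identification to be the only step needing care. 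It relies on the uniqueness of $f^+$ (outerness of $a$) and on reading \eqref{ffplusrela} as "$f\in H(b)$ whenever such an $f^+$ exists".

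\textbf{Step 2: compute with kernels.} For $\lambda\in\D$ we have $M_z^*\mathbb S_\lambda=\bar\lambda\,\mathbb S_\lambda$, so $X_b^q\mathbb S_\lambda=\bar\lambda^{q}\mathbb S_\lambda$. We also have $T_{\bar\varphi}\mathbb S_\lambda=\overline{\varphi(\lambda)}\,\mathbb S_\lambda$. Step 1 then gives
\[
\langle \mathbb S_\beta,\mathbb S_\alpha\rangle_{H(b)}=\mathbb S_\beta(\alpha)\bigl(1+\varphi(\alpha)\overline{\varphi(\beta)}\bigr)
=\mathbb S_\beta(\alpha)\,\overline{\bigl(1+\overline{\varphi(\alpha)}\varphi(\beta)\bigr)}=0 .
\]
Consequently, for all $p,q\in\Z_+$,
\[
\langle X_b^{*p}X_b^{q}\mathbb S_\beta,\mathbb S_\alpha\rangle_{H(b)}=\langle X_b^{q}\mathbb S_\beta,X_b^{p}\mathbb S_\alpha\rangle_{H(b)}=\bar\beta^{\,q}\alpha^{p}\langle \mathbb S_\beta,\mathbb S_\alpha\rangle_{H(b)}=0 .
\]

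\textbf{Step 3: conclude.} The hypothesis forces $\alpha\neq\beta$, because $1+|\varphi(\alpha)|^2\neq0$. In any case both $\mathbb S_\alpha$ and $\mathbb S_\beta$ are nonzero vectors in $H(b)$. Suppose $X_b$ were admissible, so that $\cln(X_b)=\clb(H(b))$. Taking $x=\mathbb S_\beta$, $y=\mathbb S_\alpha$ and $B=y\otimes x\in\cln(X_b)$ gives
\[
0=\langle Bx,y\rangle_{H(b)}=\|\mathbb S_\beta\|_{H(b)}^2\,\|\mathbb S_\alpha\|_{H(b)}^2 ,
\]
which is a contradiction. Hence $X_b$ is not admissible.
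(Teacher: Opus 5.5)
Your proof is correct and follows the paper's argument. You use the same identification $H(b)=H^2(\D)$ with $f^+=T_{\bar\varphi}f$, and the same Szeg\H{o} kernels $\mathbb S_\alpha,\mathbb S_\beta$, shown to be orthogonal eigenvectors of $X_b$. The only difference is the final step: the paper notes that $X_b$ restricted to the invariant subspace $\operatorname{span}\{\mathbb S_\alpha,\mathbb S_\beta\}$ is reducible and cites Proposition \ref{admtostrng}, whereas you run that proposition's rank-one argument directly.
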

\begin{proof}
 First note that for any $f \in H^2(\D),$ we have 
 $$T_{\bar b}f =T_{\bar a}T_{\bar \varphi}f.$$ It follows from \eqref{ffplusrela} that $H(b)=H^2(\D)$ and $f^+=T_{\bar \varphi}f.$ Now, by \eqref{inprodhb}, we have $$\langle f, g \rangle_{H(b)}=\langle f, g \rangle_{H^2(\D)}+\langle T_{\bar \varphi}f, T_{\bar \varphi}f \rangle_{H^2(\D)}.$$ Consider $f_1(z)=\frac{1}{1-\overline\alpha z}$ and $f_2(z)=\frac{1}{1-\overline \beta z}.$ Then 
  \begin{align*}
        \langle f_1, f_2 \rangle_{H(b)}&=\langle f_1, f_2 \rangle_{H^2(\D)}+\langle T_{\bar \varphi}f_1, T_{\bar \varphi}f_2 \rangle_{H^2(\D)} \\&=\langle f_1, f_2 \rangle_{H^2(\D)}+\overline{\varphi(\alpha)} \varphi(\beta)\langle f_1, f_2 \rangle_{H^2(\D)} \\&=\left(1+\overline{\varphi(\alpha)} \varphi(\beta)\right) \langle f_1, f_2 \rangle.
\end{align*}
It follows from the assumption that $\langle f_1, f_2 \rangle_{H(b)}=0.$ 
Also, note that
\begin{align*}
X_bf_1=\bar \alpha f_1\quad \text{and}\quad X_bf_2=\bar \beta f_2.
\end{align*}
Consider $\mathcal M=\operatorname{span}\{f_1, f_2\}.$ Therefore, $\mathcal M$ is an invariant subspace of $X_b$ and $X_b|_{\clm}$ is reducible. Thus an application of Proposition \ref{admtostrng} yields that $X_b$ is not admissible.
\end{proof}
In the following example, we exhibit a class of nonextreme functions $b$ satisfying the hypotheses of Lemma \ref{non-extre not adm}.
\begin{example} \label{nonextreme}
   Consider $b(z)=\beta B$ with $0<|\beta|<1$ and Blaschke function $B.$ Then $a=\sqrt{1-|\beta|^2}$ and it follows from \eqref{ffplusrela} that $H(b)$ is equal to $H^2(\D)$ as a set. Then $\varphi=\frac{\beta}{\sqrt{1-|\beta|^2}} B$ is bounded. Thus, $1+\overline{\varphi(z_1)}\varphi(z_2)=0$ has a solution $z_1, z_2 \in \D$ if and only if $2|\beta|^2>1.$ In this situation, $X_b$ is not admissible.  \end{example}
Proposition \ref{cyclic_vector} and Lemma \ref{k_0-cyclic vector} readily provides an equivalent criteria for $X_b$ to be admissible, which is stated as follows.
\begin{proposition} \label{necesuffi}
For $b\in H^{\infty}_1(\D)$, $\cln(X_b)=\clb(H(b))$ if and only if $k^b_0\otimes k_0^b \in \cln(X_b).$
\end{proposition}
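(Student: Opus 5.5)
The two implications are of very different weight: the forward one is immediate from the definition, and the reverse one is an application of Proposition \ref{cyclic_vector} with the cyclic vector supplied by Lemma \ref{k_0-cyclic vector}. We treat them in turn.

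For the forward implication, assume $\cln(X_b)=\clb(H(b))$. The operator $k^b_0\otimes k^b_0$ is a bounded (rank-one) operator on $H(b)$, so it belongs to $\clb(H(b))$ and hence to $\cln(X_b)$. Nothing further is needed.

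For the reverse implication, assume $k^b_0\otimes k^b_0\in\cln(X_b)$. We want to invoke Proposition \ref{cyclic_vector} with $T=X_b$ and $\bm e$ a normalization of $k^b_0$. First we check that $k^b_0\neq 0$. Since $k^b_0(0)=1-|b(0)|^2$ and $b$ lies in the closed unit ball, this value is positive unless $b$ is a unimodular constant. Note that in the unimodular-constant case $H(b)=\{0\}$, so the statement is trivially true there; we set that degenerate case aside. With $k^b_0\neq 0$, put $\bm e:=k^b_0/\|k^b_0\|_{H(b)}$. Then
\[
P_{\C\bm e}=\bm e\otimes\bm e=\|k^b_0\|^{-2}\,(k^b_0\otimes k^b_0),
\]
and since $\cln(X_b)$ is a linear subspace, $P_{\C\bm e}\in\cln(X_b)$. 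Lemma \ref{k_0-cyclic vector} gives
\[
\overline{\operatorname{span}}\{X_b^{*n}\bm e:n\geq 0\}=H(b),
\]
so $\bm e$ is a $*$-cyclic unit vector for $X_b$. Proposition \ref{cyclic_vector} now yields that $X_b$ is admissible, that is, $\cln(X_b)=\clb(H(b))$.

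No step here poses a real obstacle. The only point needing care is the nondegeneracy of $k^b_0$, which is dealt with by the observation $k^b_0(0)=1-|b(0)|^2>0$ above.
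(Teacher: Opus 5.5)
Your proposal is correct and follows the route the paper indicates: the forward direction is trivial, and the converse applies Proposition \ref{cyclic_vector} to the normalized vector $k^b_0/\|k^b_0\|$, whose $*$-cyclicity is exactly Lemma \ref{k_0-cyclic vector}. Your extra observation that $\|k^b_0\|^2=k^b_0(0)=1-|b(0)|^2>0$, with the unimodular-constant case being vacuous, fills in a normalization detail the paper leaves implicit.
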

Now, it is easy to see that 
\begin{align*}
    \overline{\operatorname{span}}^{\mathrm{WOT}}\{X_b^{*p}DX^q_b : p,q\geq0\}\subseteq \cln(X_b),
\end{align*}
where $D$ is as in \eqref{defectD}. Since $$X_b^{*m}X^n_b=\sum_{k=0}^{r-1}X_b^{*(m+k)}DX^{(n+k)}_b +X_b^{*(m+r)}X^{(n+r)}_b$$ and $X_b^l \rightarrow 0$ as $l \rightarrow \infty$ in SOT, we have $X_b^{*m}X^n \in \overline{\operatorname{span}}^{\mathrm{WOT}}\{X_b^{*p}DX^q_b : p,q\geq0\}$ for each $m,n \in \mathbb Z_+$. The preceding discussion yields the following result. 
\begin{lemma} For a nonextreme point $b\in H^\infty_1(\D)$, $$\overline{\operatorname{span}}^{\mathrm{WOT}}\{X_b^{*p}DX^q_b : p,q\geq0\}= \cln(X_b).$$

\end{lemma}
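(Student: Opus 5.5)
The argument is a two-sided inclusion, and the paragraph preceding the statement already sketches it. Write $\cln_D:=\overline{\operatorname{span}}^{\mathrm{WOT}}\{X_b^{*p}DX_b^q : p,q\geq 0\}$.

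For the inclusion $\cln_D\subseteq\cln(X_b)$, note that $D=I-X_b^*X_b=X_b^{*0}X_b^0-X_b^{*1}X_b^1$ lies in $\cls(X_b)$. Hence
\[
X_b^{*p}DX_b^q=X_b^{*p}X_b^q-X_b^{*(p+1)}X_b^{q+1}\in\cls(X_b)
\]
for all $p,q\geq 0$. Since $\cln(X_b)$ is a WOT-closed subspace, taking closed spans gives $\cln_D\subseteq\cln(X_b)$.

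For the reverse inclusion, fix $m,n\in\Z_+$. Iterating the identity $X_b^{*j}X_b^{l}=X_b^{*j}DX_b^{l}+X_b^{*(j+1)}X_b^{l+1}$ yields the telescoping formula
\[
X_b^{*m}X_b^n=\sum_{k=0}^{r-1}X_b^{*(m+k)}DX_b^{n+k}+X_b^{*(m+r)}X_b^{n+r},\qquad r\geq 1 .
\]
The finite sum belongs to $\cln_D$. It therefore suffices to show that the remainder $R_r:=X_b^{*(m+r)}X_b^{n+r}$ tends to $0$ in WOT as $r\to\infty$. For $x,y\in H(b)$ we have
\[
|\langle R_r x,y\rangle|=|\langle X_b^{n+r}x,\,X_b^{m+r}y\rangle|\leq \|X_b^{n+r}x\|\,\|X_b^{m+r}y\|.
\]
This tends to $0$ once we know that $X_b$ is a contraction with $X_b^l\to 0$ in SOT. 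Then $X_b^{*m}X_b^n\in\cln_D$, so $\cls(X_b)\subseteq\cln_D$, and taking WOT closures gives $\cln(X_b)\subseteq\cln_D$.

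The main obstacle is this SOT convergence $X_b^l\to0$ on $H(b)$. It is a contraction-plus-pureness statement, and the $H^2$ tail argument alone does not control the $H(b)$ norm.

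Contractivity follows from \eqref{defectD}: $I-X_b^*X_b=D\geq 0$. For pureness, I would use the nonextreme description \eqref{ffplusrela}–\eqref{inprodhb}. The relation $T_{\bar b}f=T_{\bar a}f^+$ is compatible with the backward shift via $(X_bf)^+=M_z^*f^+$, which is the standard fact from \cite[Section 23]{FM2}. Granting it, \eqref{inprodhb} gives
\[
\|X_b^l f\|_{H(b)}^2=\|M_z^{*l}f\|_{H^2}^2+\|M_z^{*l}f^+\|_{H^2}^2 .
\]
Both terms tend to $0$ because $M_z^*$ is pure on $H^2(\D)$.

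If verifying the identity $(X_bf)^+=M_z^*f^+$ proves awkward, there is a fallback. Use uniform boundedness of $\{X_b^l\}$, which follows from contractivity, and check convergence on the dense set of polynomials, which lie in $H(b)$ for nonextreme $b$ \cite[Section 23]{FM2}. A polynomial $f$ satisfies $X_b^lf=0$ for all $l>\deg f$, which gives the convergence trivially on that set.
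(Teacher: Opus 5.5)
Your proposal is correct and follows the paper's argument exactly. The inclusion $\subseteq$ comes from $D=I-X_b^*X_b\in\cls(X_b)$, and the reverse inclusion from the telescoping identity $X_b^{*m}X_b^n=\sum_{k=0}^{r-1}X_b^{*(m+k)}DX_b^{n+k}+X_b^{*(m+r)}X_b^{n+r}$ combined with $X_b^l\to 0$ in SOT. The paper only asserts this last convergence. Your justification is a welcome filling-in of that step rather than a different route: either $(X_bf)^+=M_z^*f^+$, which follows because $T_{\bar b}$ and $T_{\bar a}$ commute with $M_z^*$ and $T_{\bar a}$ is injective, or the fallback via density of polynomials in $H(b)$.
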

In the following, we prove a sufficient condition for the admissibility of the operator $X_b.$
\begin{theorem} \label{non-extre adm}
    Let $b$ be a nonextreme point in $H^\infty_1(\D).$ Also set $\varphi:=b/a$, where $a$ is the Pythagorean mate of $b$. If $\varphi \in H^\infty_1(\D),$ then $X_b$ is admissible.
\end{theorem}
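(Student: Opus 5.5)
The plan is to transport everything to the Hardy space $H^2(\D)$ model and reduce admissibility of $X_b$ to the admissibility of $M_z^*$ on $H^2(\D)$ (Example \ref{M_z*-admissible}).

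Since $\varphi=b/a\in H^\infty(\D)$, the argument in the proof of Lemma \ref{non-extre not adm} shows two things. First, $H(b)=H^2(\D)$ as sets. Second, $f^+=T_{\bar\varphi}f$. Hence, by \eqref{inprodhb},
\[
\langle f,g\rangle_{H(b)}=\langle Gf,g\rangle_{H^2(\D)},\qquad G:=I+T_\varphi T_\varphi^*,
\]
where $G$ is a bounded, positive, invertible operator on $H^2(\D)$. Consequently the two norms are equivalent. Moreover $\clb(H(b))=\clb(H^2(\D))$ as sets, and the two weak operator topologies coincide. Write $S=M_z$ on $H^2(\D)$. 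Then $X_b=S^*$ as a linear map, and its $H(b)$-adjoint is $X_b^{*}=G^{-1}SG$. Therefore
\[
X_b^{*p}X_b^{q}=G^{-1}S^pGS^{*q}=G^{-1}\big(S^pS^{*q}+T_\varphi S^pS^{*q}T_\varphi^*\big)=G^{-1}\Phi(S^pS^{*q}),
\]
where $\Phi(A):=A+T_\varphi AT_\varphi^*$. Here I use $S^pT_\varphi=T_\varphi S^p$ and $T_\varphi^*S^{*q}=S^{*q}T_\varphi^*$.

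Consider the maps $A\mapsto G^{-1}A$ and $\Phi$. Both are WOT-continuous, and the first is a bijection of $\clb(H^2(\D))$. So it suffices to show that the WOT-closure of $\operatorname{span}\{\Phi(S^pS^{*q})\}$ is all of $\clb(H^2(\D))$. By Example \ref{M_z*-admissible}, the span of $\{S^pS^{*q}\}$ is WOT-dense in $\clb(H^2(\D))$. By WOT-continuity of $\Phi$, the WOT-closure of $\operatorname{span}\{\Phi(S^pS^{*q})\}$ therefore contains $\Phi(\clb(H^2(\D)))$. It thus suffices to prove that $\Phi$ has WOT-dense range.

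The WOT-dual of $\clb(H^2(\D))$ consists of the functionals $A\mapsto \operatorname{tr}(KA)$ with $K$ of finite rank. If such a $K$ annihilates $\Phi(\clb(H^2(\D)))$, then $\operatorname{tr}\big((K+T_\varphi^*KT_\varphi)A\big)=0$ for all $A$. Hence $K=-T_\varphi^*KT_\varphi$, and by iteration $K=(-1)^nT_\varphi^{*n}KT_\varphi^{n}$ for all $n$. Write $K=\sum_{j=1}^N x_j\otimes y_j$. Then
\[
T_\varphi^{*n}KT_\varphi^n=\sum_j (T_\varphi^{*n}x_j)\otimes(T_\varphi^{*n}y_j).
\]
So it suffices to know that $T_\varphi^{*n}\to0$ in SOT. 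This is where $\|\varphi\|_\infty\le1$ enters. First, $T_\varphi^*$ is a contraction. Second, $T_\varphi^{*n}\mathbb S_w=\overline{\varphi(w)}^{\,n}\mathbb S_w$. If $\varphi$ is nonconstant, the maximum principle gives $|\varphi(w)|<1$, so $T_\varphi^{*n}\mathbb S_w\to0$ for every $w\in\D$. Since the kernels $\mathbb S_w$ span a dense set and the powers are uniformly bounded, we get $T_\varphi^{*n}\to0$ in SOT. Hence $K=0$, $\Phi$ has WOT-dense range, and $\cln(X_b)=\clb(H(b))$.

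It remains to treat constant $\varphi$, say $\varphi\equiv c$ with $|c|\le 1$. Then $G=(1+|c|^2)I$ and $X_b^{*p}X_b^q=S^pS^{*q}$, so the conclusion follows directly from Example \ref{M_z*-admissible}. Equivalently, in this case $\Phi=(1+|c|^2)\,\mathrm{id}$ is invertible.

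The step I expect to be the main obstacle is the density of the range of $\Phi$ when $\|\varphi\|_\infty=1$, since a Neumann-series inversion of $\Phi=\mathrm{id}+\Psi$ is unavailable there. This is why I plan the duality argument with finite-rank annihilators combined with $T_\varphi^{*n}\to0$ in SOT. One must also check carefully that WOT-closures and adjoints transfer correctly between $H(b)$ and $H^2(\D)$ via $G$.
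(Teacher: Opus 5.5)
Your argument is correct, and it takes a genuinely different route from the paper.

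The paper stays inside $H(b)$ and uses the cyclic-vector criterion (Proposition \ref{necesuffi}), so it only has to place the single rank-one operator $A=k_0^b\otimes k_0^b$ in $\cln(X_b)$. It imports the defect formula $D=I-X_b^*X_b=k_0^b\otimes k_0^b+a(0)^2\,b\otimes b$ from Fricain--Mashreghi. It then computes, via $b^+$, that $B^*k_0^b=a(0)b$ for $B=T_{\bar\varphi}$ (adjoint taken in $H(b)$), so that $D=A+B^*AB$, and recovers $A=\sum_k(-1)^kB^{*k}DB^k$ in WOT from $B^n\to0$. Finally, it needs the approximants $B_r=T_{\bar\varphi_r}$ and two successive limits $r,s\uparrow1$ to show $B^{*k}DB^k\in\cln(X_b)$, since in general $B$ is only an SOT-limit of polynomials in $X_b$.

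You instead encode the $H(b)$ inner product by the Gram operator $G=I+T_\varphi T_\varphi^*$ on $H^2(\D)$. This turns the whole family $X_b^{*p}X_b^q$ into $G^{-1}\Phi(S^pS^{*q})$ and lets you import the admissibility of $M_z^*$ on $H^2(\D)$ at once. What remains, WOT-density of the range of $\Phi$, you settle by finite-rank duality.

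Both proofs rest on the same analytic input, $T_{\bar\varphi}^n\to0$ in SOT. Yours needs neither the defect formula nor the $r\uparrow1$ approximation, and it is sharper. Because $\operatorname{span}\{\Phi(S^pS^{*q})\}$ and $\Phi(\clb(H^2(\D)))$ have the same WOT-closure, your reduction shows that whenever $\varphi\in H^\infty(\D)$, $X_b$ is admissible if and only if no nonzero finite-rank $K$ on $H^2(\D)$ satisfies $K+T_\varphi^*KT_\varphi=0$. The rank-one solution $\mathbb S_\beta\otimes\mathbb S_\alpha$, which exists when $1+\overline{\varphi(\alpha)}\varphi(\beta)=0$, then recovers Lemma \ref{non-extre not adm}.

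Your separate treatment of constant $\varphi$ is also worthwhile. The paper's maximum-modulus step tacitly assumes $|\varphi|<1$ on $\D$, which fails when $\varphi$ is a unimodular constant (i.e.\ $b$ is a constant of modulus $1/\sqrt2$). The conclusion is trivial there, but the paper's alternating series does not converge, whereas your argument covers this case.

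In return, the paper's route fits the uniform template (a $*$-cyclic vector plus a rank-one projection in $\cln$) that it uses for all of its other examples.
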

 \begin{proof}
  Since $\varphi \in H^\infty_1(\D),$  an application of maximum modulus principle yields that $|\varphi(z)|<1$ on $\D.$ Since $T_{\bar{\varphi}}\mathbb S_z=\overline{\varphi(z)}\mathbb S_z,$ it follows that
     \begin{align} \label{puretopa}
         T_{{\bar{\varphi}}}^n \rightarrow 0 \text{ as } n \rightarrow \infty\quad \text{in SOT}.
     \end{align}
Since $T_{\bar b}f=T_{\bar a}f^+,$ we have $\langle f, b \rangle_{H^2(\D)}=\langle f^+, a \rangle_{H^2(\D)}$. It now follows from \eqref{inprodhb} and \cite[Corollary 23.9]{FM2} that 
\begin{align*}
\langle f, b \rangle_{H(b)}=\langle f, b \rangle_{H^2(\D)}+\langle f^+, b^+ \rangle_{H^2(\D)}=  \langle f^+, a \rangle_{H^2(\D)} +\Big\langle f^+, \frac{1}{a(0)}-a \Big\rangle_{H^2(\D)}=\frac{1}{a(0)}f^+(0).  
\end{align*}
 Therefore, for $f \in H^2(\D),$
\begin{align*}
    \langle T_{\bar{\varphi}}f, k_0^b \rangle_{H(b)}=(T_{\bar{\varphi}}f)(0)=f^+(0)=a(0)\langle f, b \rangle_{H(b)}.
\end{align*}
Thus, $T_{\bar{\varphi}}^*k_0^b=a(0)b.$ This combined with \eqref{defectD} yields that
$$D=A+T_{\varphi}AT_{\bar{\varphi}},$$
where $A:=k^b_0\otimes k_0^b.$ It is easy to see that $$\sum_{k=0}^n(-1)^kT_{\varphi}^kDT_{\bar{\varphi}}^k=A+(-1)^nT_{\varphi}^{n+1}AT_{\bar{\varphi}}^{n+1}.$$ This combined with \eqref{puretopa} yields that 
$$\sum_{k=0}^\infty(-1)^kT_{\varphi}^kDT_{\bar{\varphi}}^k=A \mbox { in WOT}.$$ To complete the proof, it remains to shows that $T_{\varphi}^kDT_{\bar{\varphi}}^k \in \cln(T)$, for each $k\geq 0$ (see Proposition \ref{necesuffi}).
For any polynomials $P$ and $Q$, it is immediate that 
\begin{equation}\label{stability}
P(X_b^*)\,\mathcal N(X_b)\,Q(X_b)\subseteq\mathcal N(X_b).
\end{equation}
For $0<r\leq 1$, let
\begin{align*}
\varphi_r(z):=\varphi(rz),\qquad B_r:=T_{\overline{\varphi}_r}\quad\text{and} \quad B:=B_1.
\end{align*}
We first claim that $B_r$ is a contraction on $H(b)$ for $0<r< 1.$
Since $\|\varphi_r\|_\infty\leq1$, $B_r$ is a contraction on $H^2(\D)$. Moreover, since $B_rB=BB_r$, by \eqref{inprodhb},
\begin{align*}
\|B_rf\|_{H(b)}^2
=\|B_rf\|_{H^2(\D)}^2+\|B(B_rf)\|_{H^2(\D)}^2
&=\|B_rf\|_{H^2(\D)}^2+\|B_r(Bf)\|_{H^2(\D)}^2\\
&\leq\|f\|_{H^2(\D)}^2+\|Bf\|_{H^2(\D)}^2
=\|f\|_{H(b)}^2.
\end{align*}
 For each $z\in\mathbb D$, the reproducing property of $k _z$ in $H^2(\D)$ yields that
\begin{align*}
B_r\mathbb S _z=\overline{\varphi(rz)}\,\mathbb S_z
\quad\text{and}\quad
B\mathbb S_z=\overline{\varphi(z)}\,\mathbb S_z,
\end{align*}
where $\mathbb S_z$ is the reproducing kernel of $H^2(\D)$ (see \eqref{debrangeskernel}).
Therefore, $B_r\mathbb S _z\to B\mathbb S_z$ in $H(b)$. Since the span of the
$\mathbb S_z$'s is dense and $\displaystyle \sup_{0<r<1}\|B_r\|\leq1$, it follows that
\begin{equation}\label{Br-SOT}
B_r\to B\quad\text{in SOT\,\, as }\,r\rightarrow 1.
\end{equation}
For each fixed $k\geq0$, it follows that
\begin{align} \label{SOTWOT}
   B_r^k\to B^k\quad\text{in SOT},
\quad
B_r^{*k}\to B^{*k}\quad\text{in WOT}. 
\end{align}
Write $\varphi(z)=\sum_{n\geq0}a_nz^n$. Since $\|X_b\|\leq1$, we have
\begin{align*}
B_r=\sum_{n=0}^{\infty}\overline{a}_n\,r^nX_b^n,
\end{align*}
where the series converges in norm. Hence $B_r$ is a norm limit of
polynomials in $X_b$. Similarly $B_r^*$ is a norm limit of polynomials in
$X_b^*$. It now follows from \eqref{SOTWOT} and WOT closedness of $\mathcal N(X_b)$ that 
\begin{align}\label{Brinnxb}
B_r^{*k}D B_s^k\in\mathcal N(X_b)\quad (r,s\in(0,1), ~~ k\geq0)
\end{align}
(see \eqref{defectD}). Now, first fix $s$ and let $r\uparrow1$.
It follows from \eqref{SOTWOT} that
\begin{align*}
B_r^{*k}D B_s^k\to B^{*k}D B_s^k
\quad\text{in WOT}.
\end{align*}
Since $\mathcal N(X_b)$ is WOT closed,
$B^{*k}D B_s^k\in\mathcal N(X_b)$ (see \eqref{Brinnxb}). This together with \eqref{Br-SOT} gives that
\begin{align*}
B^{*k}D B_s^k\to B^{*k}D B^k \quad\mbox{ as } s\uparrow1\quad\text{in WOT},
\end{align*}
and consequently,
\begin{equation}\label{membership}
T_{\varphi}^kDT_{\bar{\varphi}}^k=B^{*k}D B^k\in\mathcal N(X_b)\quad(k\geq0).
\end{equation}
This completes the proof.
 \end{proof}

 In view of the above Theorem, we have the following result as an application of Theorem \ref{mainthm}.
 \begin{theorem}\label{Xb_nonextreme_result}
    Let $b_1,\ldots,b_n$ be nonextreme points in $H_1^{\infty}(\D)$ such that $b_i/a_i\in H_1^{\infty}(\D)$, where $a_i$'s are the Pythagorean mates of $b_i$, for all $i=1,\ldots,n$.  Let $\fM$ be a joint invariant subspace of  $H(b_1)\otimes\cdots\otimes H(b_n)$ under $(\tilde{X}_{b_1}, \ldots, \tilde{X}_{b_n})$, where $\tilde{X}_{b_i}$, for all $i=1,\ldots,n$, are as in Setup \ref{setup}. Then $\fM$ is doubly commuting if and only if there exist ${X}_{b_i}$-invariant subspace $\clm_i$ of $H(b_i)$, for each $i=1,\ldots, n$, such that
    \[\fM=\clm_1\otimes\cdots\otimes\clm_n.\]
\end{theorem}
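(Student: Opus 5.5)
This theorem should follow directly from Theorem \ref{mainthm}. The only task is to check its hypotheses in the present setting, namely that each $X_{b_i}$ is admissible on $H(b_i)$. I will take $\clh_i=H(b_i)$ and $T_i=X_{b_i}$ in Setup \ref{setup}, so that $\tilde T_i=\tilde X_{b_i}$ is exactly the operator in the statement.

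First, for each $i$, $b_i$ is a nonextreme point of $H^\infty_1(\D)$ with Pythagorean mate $a_i$, and $\varphi_i:=b_i/a_i\in H^\infty_1(\D)$ by assumption. These are precisely the hypotheses of Theorem \ref{non-extre adm}, so that theorem gives that $X_{b_i}$ is admissible, that is, $\cln(X_{b_i})=\clb(H(b_i))$. One point deserves care: Theorem \ref{non-extre adm} applies Proposition \ref{necesuffi} through Lemma \ref{k_0-cyclic vector}. That lemma holds for every $b\in H^\infty_1(\D)$, so it raises no issue even when $b_i$ is constant.

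Second, I apply Theorem \ref{mainthm} to the tuple $(\tilde X_{b_1},\ldots,\tilde X_{b_n})$ on $H(b_1)\otimes\cdots\otimes H(b_n)$. This yields the forward direction: a doubly commuting jointly invariant $\fM$ has the form $\clm_1\otimes\cdots\otimes\clm_n$, where each $\clm_i$ is an $X_{b_i}$-invariant subspace of $H(b_i)$.

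The converse direction is the easy one. If $\fM=\clm_1\otimes\cdots\otimes\clm_n$, then
\[
\tilde X_{b_i}|_{\fM}=I_{\clm_1}\otimes\cdots\otimes X_{b_i}|_{\clm_i}\otimes\cdots\otimes I_{\clm_n}.
\]
Operators acting on different tensor factors commute with each other and with each other's adjoints, so the restricted tuple is doubly commuting.

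There is essentially no obstacle here; all the substantive work is in Theorem \ref{non-extre adm}. The only thing to verify is that the operators in the statement really are the tensor-factor operators of Setup \ref{setup} built from $X_{b_i}$, and that the Hilbert spaces $H(b_i)$ are separable. Separability holds because the $H(b_i)$ are reproducing kernel Hilbert spaces of analytic functions on $\D$, in which the polynomials are dense.
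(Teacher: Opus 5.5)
Your proposal is correct and is exactly the paper's argument: Theorem \ref{non-extre adm} makes each $X_{b_i}$ admissible, and Theorem \ref{mainthm} gives the characterization, with the converse being the trivial direction you verify. As a minor aside, constant $b$ is harmless in Lemma \ref{k_0-cyclic vector}; where it really needs a second look is the maximum-modulus step in the proof of Theorem \ref{non-extre adm}, which fails if $b\equiv c$ with $|c|=1/\sqrt{2}$ (then $\varphi$ is a unimodular constant), but in that case $D=2\,k_0^b\otimes k_0^b$, so $k_0^b\otimes k_0^b\in\cls(X_b)$ and admissibility still holds.
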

We end this section by giving a class of examples of nonextreme points $b$ satisfying the hypothesis of Theorem \ref{Xb_nonextreme_result}.
\begin{example} \label{nonex adm ex}
Take nonextreme point $b\in H^\infty_1(\D)$ such that  $\|b\|_{\infty}\leq \frac{1}{\sqrt 2}.$ 
From $|a|^2=1-|b|^2$ a.e. on $\mathbb T,$
we get
\[
\left|\frac {b}{a}\right|^2=\frac{|b|^2}{1-|b|^2}\leq1 \qquad\text{a.e. on }\mathbb T.
\]
Since $a$ is outer, it follows from \cite[Theorem 2.11]{D1970} that
\[
    \frac ba\in H_1^\infty(\D)
\]
So, $b$ satisfies the hypotheses of Theorem \ref{non-extre adm}.
\end{example}
\section{Applications of the main theorem to weighted Bergman spaces and complete Nevanlinna-Pick spaces}\label{CNP spaces}
In this section, we show that the adjoint of the shift operator on the weighted Bergman space $A^2_{\omega}(\D)$, associated with a weight sequence specified below, is admissible. This result enables us to characterize doubly commuting quotient modules of the weighted Bergman space $A^2_{\bm{\omega}}(\D^n)$, associated with a suitable multi-weight sequence $\bm{\omega}$. Furthermore, in the setting of a complete Nevanlinna–Pick space $\clh(K)$ associated with a complete Nevanlinna–Pick kernel $K$, we prove that the backward shift operator $M_z^*$ on $\clh(K)$ is admissible. This, in turn, allows us to characterize doubly commuting quotient modules of tensor products of complete Nevanlinna–Pick spaces.
\subsection{Doubly commuting quotient modules of weighted Bergman spaces
$A^2_{\texorpdfstring{\bm{\omega}}{\omega}}(\mathbb{D}^n)$} \label{weighted bergman}
Let $\omega=\{\omega_m\}_{m\ge 0}$ be a positive decreasing sequence such that 
\begin{align} \label{weightsequ}
    \omega_0=1 \quad \text{and}\quad \liminf_{m \to \infty} \omega_m^{\frac{1}{m}} = 1.
\end{align}
Corresponding to the sequence $\omega$, we consider weighted Bergman space over $\D$, denoted by $A^2_{\omega}(\D)$, and defined as 
\[A_\omega^2(\D):= \left \{f(z) = \sum_{m \geq 0} a_mz^m:  \sum_{m \geq 0} |a_m|^2 \omega_m < \infty, \,a_m \in \C,\, z\in \D\right \}.\]
It is a reproducing kernel Hilbert space (for an exposition, \cite{PR}) with the kernel $K_{\omega} : \D \times \D \to \C$ defined by  
\[ K_{\omega}(\zeta,\eta) = \sum_{m \geq 0} \frac{1}{\omega_m} 
(\zeta\bar{\eta})^m   \quad (\zeta,\eta \in \D).\]
The multiplication operator $M_z$, known as shift operator, on $A^2_{\omega}(\D)$ is defined by $(M_z f)(\eta)= \eta f(\eta)$ for all $\eta \in \D$. A straight forward computation shows that \[ \|M_z\|_{A^2_{\omega}(\D)}^2 = \sup_m \frac{\omega_{m+1}}{\omega_m} .\]
Thus, it follows from \eqref{weightsequ} that $M_z$ is a contraction. The kernel $K_{\omega}$ has an associated analytic function $k_{\omega}$ on $\D$ defined by 
\begin{align}\label{ana_fun_omega}
    k_{\omega}(z)=\sum_{m \geq 0} \frac{1}{\omega_m} 
z^m \quad (z\in \D).
\end{align} 

\begin{definition} \label{admweightseq}
An admissible weight sequence is a positive decreasing sequence $\omega=\{\omega_m\}_{m\ge 0}$ such that $\omega_0=1$, $\liminf_{m \to \infty} \omega_m^{\frac{1}{m}} = 1$, the corresponding analytic function $k_{\omega}$ as in \eqref{ana_fun_omega} is non-vanishing on $\D$ and the function $\frac{k_\omega}{k_{\omega,r}}$ has nonnegative Taylor coefficients for $0<r<1$, where $k_{\omega,r}(z)=k_\omega(rz)$.
\end{definition}
Natural examples of admissible weight sequences are the constant sequence $\omega_m = 1$ for all $m\ge 0$ and for a fixed $\alpha\in \mathbb{R}$ with $\alpha>1$, consider $\omega_\alpha=\{\omega_m\}_{m\geq 0}$, where
\begin{align} \label{weightedbergseq}
    \omega_m = \frac{1}{\binom{\alpha+m-1}{m}} \quad \text{and}\quad \binom{\alpha+m-1} {m}=\frac{\Gamma(\alpha+m)}{\Gamma{(\alpha){m!}}}, \quad m\geq0.
\end{align}
The constant sequence case corresponds to the Hardy space $H^2(\D)$ where as the later corresponds to the Bergman space over $\D$ with kernel  
 $K_\alpha(z,w) = \frac{1}{(1-z\bar{w})^\alpha}$ ($z,w\in\D$).
 
 Now we fix an admissible weight sequence $\omega$. For $0<r<1$, we set
\[  D_{\omega,M^*_z}(r):=\sum_{l\geq 0}r^l c_l M_z^l M_z^{*l},\]
where $c_l$ is the $l$-th coefficient of power series of $\frac{1}{k_\omega}$, that is,
\[\frac{1}{k_\omega(z)}=\sum_{l\geq 0}c_l z^l\quad(z\in \D).\]
From \cite[Corollary 6.4]{O}, we have 
\[ \lim_{r\to 1} D_{\omega,M^*_z}(r)= P_{\C \bm 1} \quad \mbox{ in SOT }.\]
Since $\cln(M^*_z)$ is closed under WOT, $P_{\C \bm 1}\in \cln(M^*_z)$.
Since $\omega_k>0$ for $k\in \Z_+$, we have $$ \overline{\operatorname{span }}\{M_z^{k} \bm 1:k\in \Z_+\}=A_\omega^2(\D).$$
Therefore, by Proposition \ref{cyclic_vector}, we get the following result.
\begin{proposition}
For an admissible weight sequence $\omega$ as defined in Definition \ref{admweightseq}, the operator $M^*_z$ is an admissible operator on $A^2_\omega(\D)$ (see Definition \ref{Defn: admissible}).
\end{proposition}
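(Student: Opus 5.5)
The argument verifies the two hypotheses of Proposition \ref{cyclic_vector} for $T=M_z^*$ on $A^2_\omega(\D)$, with the unit vector $\bm e=\bm 1$. Since $\omega_0=1$, we have $\|\bm 1\|^2=\omega_0=1$, so $\bm 1$ is a unit vector. The two hypotheses are that $\bm 1$ is $*$-cyclic for $M_z^*$, i.e.\ cyclic for $M_z$, and that the rank-one projection $P_{\C\bm 1}$ lies in $\cln(M_z^*)$.

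\emph{Cyclicity.} The monomials $z^k=M_z^k\bm 1$ form an orthogonal basis of $A^2_\omega(\D)$, with $\|z^k\|^2=\omega_k>0$. Hence $\overline{\operatorname{span}}\{M_z^k\bm 1:k\in\Z_+\}=A^2_\omega(\D)$.

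\emph{Membership of $P_{\C\bm 1}$.} Note that $\cls(M_z^*)$ is spanned by the operators $M_z^{q}M_z^{*p}$, so every $M_z^lM_z^{*l}$ belongs to $\cls(M_z^*)$. Each partial sum of
\[
D_{\omega,M_z^*}(r)=\sum_{l\ge0}r^lc_lM_z^lM_z^{*l}
\]
therefore lies in $\cls(M_z^*)$. Since $\|M_z\|\le 1$ and $\sum_l r^l|c_l|<\infty$ for $0<r<1$ (because $1/k_\omega$ is holomorphic on $\D$, as $k_\omega$ is non-vanishing there), the series converges in operator norm. Thus $D_{\omega,M_z^*}(r)\in\cln(M_z^*)$ for every $0<r<1$.

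Next invoke \cite[Corollary 6.4]{O}, which uses exactly the admissibility hypotheses in Definition \ref{admweightseq} (non-vanishing of $k_\omega$ and nonnegativity of the Taylor coefficients of $k_\omega/k_{\omega,r}$). It gives $D_{\omega,M_z^*}(r)\to P_{\C\bm 1}$ in SOT as $r\to1$. SOT convergence implies WOT convergence, and $\cln(M_z^*)$ is WOT closed, so $P_{\C\bm 1}\in\cln(M_z^*)$. Proposition \ref{cyclic_vector} then gives $\cln(M_z^*)=\clb(A^2_\omega(\D))$.

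The main obstacle is the SOT limit $D_{\omega,M_z^*}(r)\to P_{\C\bm 1}$; everything else is bookkeeping. If \cite{O} were not available, I would verify it directly on monomials. Since $M_z^{*l}z^m=(\omega_m/\omega_{m-l})z^{m-l}$ for $l\le m$, we get $M_z^lM_z^{*l}z^m=(\omega_m/\omega_{m-l})z^m$, so $D_{\omega,M_z^*}(r)$ is diagonal with entries $d_m(r)=\omega_m\sum_{l\le m}r^lc_l\omega_{m-l}^{-1}$. These are $\omega_m$ times the Taylor coefficients of $k_\omega(z)/k_\omega(rz)$. This gives $d_0(r)=1$ and $d_m(r)\to0$ as $r\to1$ for $m\ge1$, since $k_\omega/k_{\omega,r}\to1$. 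The nonnegativity assumption then makes $0\le d_m(r)\le1$, because $D_{\omega,M_z^*}(r)$ acts as a positive contraction. With this uniform bound, dominated convergence yields the SOT convergence.
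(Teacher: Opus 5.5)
Your proof is correct and is essentially the paper's argument: Proposition \ref{cyclic_vector} with $\bm e=\bm 1$, cyclicity of $\bm 1$ for $M_z$, and the SOT limit $D_{\omega,M_z^*}(r)\to P_{\C\bm 1}$ from \cite[Corollary 6.4]{O}, where you also supply a step the paper leaves implicit, namely that each $D_{\omega,M_z^*}(r)$ lies in $\cln(M_z^*)$ because the series converges in operator norm. In your optional direct check, the bound $d_m(r)\le 1$ is justified circularly (``because $D_{\omega,M_z^*}(r)$ is a contraction'' is exactly the claim), but it does follow from the hypotheses: $k_\omega-k_\omega/k_{\omega,r}=(k_\omega/k_{\omega,r})(k_{\omega,r}-1)$ is a product of two power series with nonnegative Taylor coefficients.
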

Let, for each $i=1,\ldots ,n$, $\omega_i=\{\omega^{(i)}_{m}\}_{m \geq0}$ be an admissible weight sequence.
Then the $n$-tuple $\bm{\omega} = (\omega_1,\ldots, \omega_n)$ of admissible weight sequences is called an admissible multi-weight sequence. Corresponding to such an admissible multi-weight sequence $\bm{\omega}$, consider the weighted Bergman space $A^2_{\bm{\omega}}(\D^n)$ over $\D^n$, that is, 
\[
A^2_{\bm{\omega}}(\D^n):=\Bigg\{f(z)=\sum_{\alpha\in \mathbb{Z}^n_{+}}a_{\alpha}\z^{\alpha}:
\sum_{\alpha\in\mathbb{Z}^n_{+}}|a_{\alpha}|^2\omega_{\alpha_1}^{(1)}\cdots\omega_{\alpha_n}^{(n)}<\infty,\, a_{\alpha}\in\C, \z\in\D^n\Bigg \}.
\]
The Hilbert space $A^2_{\bm{\omega}}(\D^n)$ is unitarily equivalent to $A^2_{\omega_1}(\D)\otimes\cdots\otimes  A^2_{\omega_n}(\D)$ via the natural unitary which sends the monomials $\z^{\alpha}$ to $z_1^{\alpha_1}\otimes \cdots \otimes z_n^{\alpha_n}$ for all $\alpha=(\alpha_1,\ldots,\alpha_n)\in\mathbb Z_{+}^n$. We refer the reader to the manuscript \cite{O, BDDP} for a comprehensive treatment of such weight sequences.
Now we consider $\clh_i=A^2_{\omega_i}(\D)$ and $T_i=M_z$ in Setup \ref{setup}. Then
$$M_{z_i}:=\tilde T_i=\big(I_{ A^2_{\omega_1}(\D)}\otimes\cdots\otimes  \underbrace{M_z}\limits_{\textup{i-th place}}\otimes\cdots \otimes I_{ A^2_{\omega_n}(\D)}\big)\quad (i=1,\ldots,n).$$ Let $\fQ\subseteq A^2_{\bm\omega}(\D^n)$ be a closed subspace. We say that $\fQ$ is a \textit{quotient module of $A^2_{\bm{\omega}}(\D^n)$} if $\fQ$ is jointly $(M_{z_1}^*,\ldots,M_{z_n}^*)$-invariant. The following result is an application of Theorem \ref{mainthm} and, in the special case of an admissible multi-weight sequences (for $\alpha \in \mathbb N$ in \eqref{weightedbergseq}), recovers \cite[Theorem 4.9]{CDS}.
\begin{theorem} \label{resultweightedberg}
   Let $\bm{\omega} = (\omega_1,\ldots, \omega_n)$ be an admissible multi-weight sequence. Then a quotient module $\fQ$ of $A^2_{\bm{\omega}}(\D^n)$ is doubly commuting if and only if \,\[\fQ=\clq_1\otimes\cdots\otimes\clq_n,\]
    for some quotient modules $\clq_i$ of $A^2_{\omega_i}(\D)$, for all $i=1,\ldots,n$.
\end{theorem}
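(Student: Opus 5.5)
The result follows directly from Theorem \ref{mainthm} once we set $\clh_i=A^2_{\omega_i}(\D)$ and $T_i=M_z^*$ on $A^2_{\omega_i}(\D)$ for $i=1,\ldots,n$. We also need a unitary identification of $A^2_{\bm\omega}(\D^n)$ with $A^2_{\omega_1}(\D)\otimes\cdots\otimes A^2_{\omega_n}(\D)$ that intertwines the adjoint coordinate shifts.

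First, we check the identification. The natural unitary $U$ sends $\z^{\alpha}$ to $z_1^{\alpha_1}\otimes\cdots\otimes z_n^{\alpha_n}$. It is isometric because the norm of $\z^\alpha$ in $A^2_{\bm\omega}(\D^n)$ is $(\omega^{(1)}_{\alpha_1}\cdots\omega^{(n)}_{\alpha_n})^{1/2}$, which equals the product of the norms of the $z_i^{\alpha_i}$. On monomials we have $UM_{z_i}=\tilde T_i^{*}U$, where $\tilde T_i^*=I\otimes\cdots\otimes M_z\otimes\cdots\otimes I$. Since $U$ is unitary, taking adjoints gives $UM_{z_i}^*U^*=\tilde T_i$ with $T_i=M_z^*$. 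It follows that a closed subspace $\fQ$ is jointly $(M_{z_1}^*,\ldots,M_{z_n}^*)$-invariant exactly when $U\fQ$ is jointly $(\tilde T_1,\ldots,\tilde T_n)$-invariant. Moreover, $U$ carries the restrictions $M_{z_i}^*|_{\fQ}$ unitarily onto $\tilde T_i|_{U\fQ}$. Double commutativity is preserved under unitary equivalence, so $\fQ$ is doubly commuting if and only if $U\fQ$ is.

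Second, by the preceding Proposition, each $M_z^*$ on $A^2_{\omega_i}(\D)$ is admissible because $\omega_i$ is an admissible weight sequence. That argument rests on $P_{\C\bm1}\in\cln(M_z^*)$, which comes from \cite[Corollary 6.4]{O}, together with Proposition \ref{cyclic_vector}. Theorem \ref{mainthm} therefore applies. It says that $U\fQ$ is doubly commuting if and only if $U\fQ=\clq_1\otimes\cdots\otimes\clq_n$, where each $\clq_i$ is $M_z^*$-invariant, that is, a quotient module of $A^2_{\omega_i}(\D)$. Pulling this back through $U$ gives the stated characterization.

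The converse direction, that a tensor product of quotient modules is doubly commuting, is the trivial part of Theorem \ref{mainthm}. The only point needing care is the bookkeeping with adjoints: the tilde operators in Setup \ref{setup} are built from $T_i=M_z^*$, and the intertwining $UM_{z_i}^*U^*=\tilde T_i$ has to be verified. I do not expect a genuine obstacle here, since the analytic input (admissibility) is already established.
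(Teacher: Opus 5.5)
Your proposal is correct and follows the paper's argument. The paper likewise identifies $A^2_{\bm\omega}(\D^n)$ with $A^2_{\omega_1}(\D)\otimes\cdots\otimes A^2_{\omega_n}(\D)$, uses the admissibility of $M_z^*$ on each $A^2_{\omega_i}(\D)$ (from \cite[Corollary 6.4]{O} and Proposition \ref{cyclic_vector}), and applies Theorem \ref{mainthm}; your choice $T_i=M_z^*$ with $UM_{z_i}^*U^*=\tilde T_i$ is the precise form of the paper's looser bookkeeping, which writes $T_i=M_z$.
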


\subsection{Doubly commuting submodules of tensor products of complete Nevanlinna-Pick spaces} \label{NPS}

Let $K:\D\times \D\to \C$ be a reproducing kernel such that
\begin{align}\label{CNP_Kernel}
    K(z,w)=\sum_{l=0}^{\infty}a_l(z\overline w)^l, \qquad a_0=1,\quad a_l>0,
\end{align}
with convergence on $\D\times\D$. We assume that $K$ is a complete Nevanlinna-Pick kernel on $\D$. The reproducing kernel Hilbert space corresponding to $K$ is denoted by $\clh(K)$. It is well-known that for kernel as in \eqref{CNP_Kernel} the coordinate multiplier $M_z$ on $\clh(K)$, defined by $M_zf(w)=wf(w)$ for $f\in \clh(K)$ and $w\in \D$, is bounded if and only if $\sup_l a_l/a_{l+1}<\infty$.
It is easy to see that with respect to the orthonormal basis $\{e_l\}_{l\geq 0}$, the shift $M_z$ satisfies
\[
M_z e_l=\sqrt{\frac{a_l}{a_{l+1}}}\,e_{l+1},
\qquad
M_z^*e_l=
\begin{cases}
\sqrt{\frac{a_{l-1}}{a_l}}\,e_{l-1},&l\geq1,\\
0,&l=0,
\end{cases}
\]
where $e_l=\sqrt{a_l}\,z^l$ for $l\geq 0$. Moreover, by iterating the above identities, we obtain
\begin{align}\label{action_shifts}
    M_z^j M_z^{*j}e_l=
\begin{cases}
(a_{l-j}/a_l)e_l,&j\leq l,\\
0,&j>l.
\end{cases}
\end{align}
\textit{A complete Nevanlinna-Pick kernel is said to be admissible if it has the form \eqref{CNP_Kernel} and $M_z$ is bounded}. Examples of the admissible complete Nevanlinna-Pick kernels include the reproducing kernel of the Hardy space $H^2(\D)$ and the reproducing kernel of the classical Dirichlet space over $\D$ which is
\[
K(z,w)=\sum_{n=0}^{\infty}\frac{(z\overline w)^n}{n+1}\quad(z,w\,\in\D).
\] 
In the following result, we prove that the backward shift $M^*_z$ on $\clh(K)$ is an admissible operator. 
\begin{proposition}\label{gerneral_RKHS}
Let $K$ be an admissible complete Nevanlinna-Pick kernel. Then $M^*_z$ on $\clh(K)$ is an admissible operator (see Definition \ref{Defn: admissible}).
\end{proposition}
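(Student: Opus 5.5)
The plan is to apply Proposition \ref{cyclic_vector} with $T=M_z^*$ and $\bm e=e_0=\bm 1$. Since $\cls(M_z^*)=\operatorname{span}\{M_z^{p}M_z^{*q}\}$, it suffices to establish two facts:
\[\overline{\operatorname{span}}\{M_z^k\bm 1:k\in\Z_+\}=\clh(K)\quad\text{and}\quad P_{\C\bm 1}\in\cln(M_z^*).\]
The first is immediate, because $M_z^k\bm 1=z^k$ is a nonzero multiple of $e_k$ and $\{e_k\}$ is an orthonormal basis.

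For the second, we use the complete Nevanlinna--Pick property. Since $K$ is a normalized CNP kernel, $1-1/K$ is positive semidefinite. Writing
\[\frac{1}{K(z,w)}=\sum_{j\ge 0}c_j(z\overline w)^j,\]
we get $c_0=1$ and $c_j\le 0$ for $j\ge1$. This follows from the standard Kaluza-type characterization: the coefficients $b_j=-c_j$ are nonnegative, and $\sum_{j\ge1} b_j t^j\le 1$ for $0\le t<1$. From $K\cdot(1/K)=1$ we obtain the coefficient identities
\[\sum_{j=0}^{l}c_j a_{l-j}=\delta_{l0}.\]

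Next we consider the diagonal operators
\[D(r):=\sum_{j\ge0}r^jc_jM_z^jM_z^{*j},\qquad 0<r<1.\]
By \eqref{action_shifts}, each term acts diagonally on $e_l$ (and vanishes when $j>l$), so
\[D(r)e_l=\Big(\sum_{j=0}^{l}r^jc_j\,a_{l-j}/a_l\Big)e_l.\]
As $r\to1$, the eigenvalue $d_l(r)$ tends to $\delta_{l0}$ for each $l$, by the coefficient identity above. Each partial sum of the series defining $D(r)$ lies in $\cls(M_z^*)$. Convergence of these partial sums is the point that needs care, and we handle it by uniform boundedness. On $e_l$ the terms with $j\ge1$ are all nonpositive, and they sum to at least $-1$ because $\sum_{j\ge1}b_ja_{l-j}=a_l$ for $l\ge1$. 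Hence every partial sum is a diagonal operator with entries in $[0,1]$, so the partial sums are uniformly bounded and converge in SOT, giving $D(r)\in\cln(M_z^*)$. The same bound gives $\|D(r)\|\le1$ for all $r$. Since the operators $D(r)$ are diagonal with uniformly bounded entries converging pointwise to $\delta_{l0}$, we obtain $D(r)\to P_{\C\bm 1}$ in SOT, and hence in WOT. Because $\cln(M_z^*)$ is WOT closed, $P_{\C\bm 1}\in\cln(M_z^*)$, and Proposition \ref{cyclic_vector} completes the argument.

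I expect the main obstacle to be the uniform bound $|d_l(r)|\le 1$, since for $r<1$ the positive term $c_0=1$ is only partially cancelled by the negative terms. The sign pattern handles this: $d_l(r)=1-\sum_{j\ge1} r^j b_j a_{l-j}/a_l$ lies between $1-\sum_{j\ge1}b_ja_{l-j}/a_l=0$ and $1$. So the only genuinely external input is the sign property $c_j\le 0$ for $j\ge1$ of $1/K$, which is the standard characterization of CNP kernels of the form \eqref{CNP_Kernel}. This role is analogous to that played by \cite[Corollary 6.4]{O} in the weighted Bergman case.
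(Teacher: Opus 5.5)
Your proof is correct and follows essentially the same route as the paper: you use the $*$-cyclic vector $\bm 1$, the nonnegativity of the coefficients $b_j$ of $1-1/K$, and the diagonal action of $M_z^jM_z^{*j}$ on $e_l$, with eigenvalues kept in $[0,1]$ by the identity $a_l=\sum_{j=1}^l b_ja_{l-j}$. The only difference is your Abel parameter $r$, which is unnecessary: your partial sums at $r=1$ are exactly the paper's $F_N=I-\sum_{j=1}^N b_jM_z^jM_z^{*j}$, and the same $[0,1]$ bound gives $F_N\to P_{\C\bm 1}$ in SOT directly, without the double limit.
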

\begin{proof} Since $K$ is an admissible complete Nevanlinna-Pick kernel, it follows from \eqref{CNP_Kernel} that $K(0, w)=1$. Thus, by (\cite[Theorem 7.28]{AglerM}), there exist $b_j\geq 0$ for all $j\geq 0$ such that  
\begin{align*}
    1-\frac1{\sum_{l\geq0}a_l (z\bar{w})^l}=\sum_{j\geq1}b_j (z\bar{w})^j \quad (z, w\in \D).
\end{align*}
By comparing the coefficients, we have
\begin{equation}\label{eq:recurrence}
a_l=\sum_{j=1}^{l}b_j a_{l-j}\quad(l\geq1).
\end{equation}
We define for $N\geq 1$,
\[
F_N=I-\sum_{j=1}^{N}b_j M_z^j M_z^{*j}.
\]
\textbf{Claim:} $F_N \to P_{\C \bm 1}$ in SOT.
Now for $f(z)=\sum_{l\geq 0}c_l z^l\in \clh(K)$, using \eqref{action_shifts}, we obtain
\begin{align*}
(F_Nf)(z)
=f(z)-\sum_{j=1}^{N}b_j(M_z^jM_z^{*j}f)(z)
=c_0+\sum_{l=1}^{\infty}c_l
\left(1-\frac{1}{a_l}
\sum_{j=1}^{\min\{N,l \}}b_j a_{l-j}\right)z^l.
\end{align*}
Also, observe that the identity \eqref{eq:recurrence} gives that 
\[
1-\frac1{a_l}\sum_{j=1}^{l}b_ja_{l -j}=0 \quad (1\leq l\leq N).
\]
Further for $l>N$, by \eqref{eq:recurrence} 
\begin{equation}\label{eq:tail}
r_{N,l}:=1-\frac1{a_l}\sum_{j=1}^{N}b_ja_{l-j}
=\frac1{a_l}\sum_{j=N+1}^{l}b_ja_{l-j}.
\end{equation}
Thus
\begin{align}\label{eq:F_n}
    (F_Nf)(z)=c_0+\sum_{l>N}c_l r_{N, l}z^l.
\end{align}
Since $b_j\geq 0$, \eqref{eq:recurrence} implies that
\[
0\leq\sum_{j=N+1}^{l}b_ja_{l-j}
\leq\sum_{j=1}^{l}b_ja_{l-j}=a_l,
\]
and hence, $0\leq r_{N,l}\leq1$ for $l>N$. Since $(P_{\C \bm 1}f)(z)=c_0$, by \eqref{eq:F_n}, we obtain
\begin{align*}
\|F_Nf-P_{\C \bm 1}f\|_{\clh(K)}^2
=\sum_{l>N}\frac{|c_l|^2}{a_l}r_{N,l}^2
\leq\sum_{l>N}\frac{|c_l|^2}{a_l}
\to 0.
\end{align*}
This proves the claim. Therefore $P_{\C \bm 1}\in \cln(M^*_z)$. Since $a_k>0$ for $k\geq 0$, $$\overline{\operatorname{span}}\{M_z^{k} \bm 1:k\in \Z_+\}=\clh(K).$$ By Proposition \ref{cyclic_vector}, $M^*_z$ is an admissible operator. This completes the proof.
\end{proof}
Considering admissible complete Nevanlinna-Pick kernels $K_1,\ldots,K_n$ and with the notation of Setup \ref{setup}, setting $\clh_i=\clh(K_i)$ and $T_i=M^*_z$ for each $i=1,\ldots, n$, we have the following result as a consequence of Theorem \ref{mainthm}.
\begin{theorem} \label{resultNPS}
    Let $K_i$, for $i=1,\ldots,n$, be an admissible complete Nevanlinna-Pick kernel on $\D$.  A closed subspace $\fQ$ of $\clh(K_1)\otimes\cdots\otimes\clh(K_n)$ is doubly commuting quotient module if and only if \,\[\fQ=\clq_1\otimes\cdots\otimes\clq_n,\]
     for some quotient module $\clq_i$ of $\clh(K_i)$, for all $i=1,\ldots,n$.
\end{theorem}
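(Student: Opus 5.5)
The plan is to deduce Theorem \ref{resultNPS} directly from Theorem \ref{mainthm}, taking $\clh_i=\clh(K_i)$ and $T_i=M_z^*$ on $\clh(K_i)$. Proposition \ref{gerneral_RKHS} shows that each $M_z^*$ on $\clh(K_i)$ is admissible, so the only work is to match the objects in Theorem \ref{resultNPS} with those in Theorem \ref{mainthm}.

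First, I identify $\clh(K_1)\otimes\cdots\otimes\clh(K_n)$ with the reproducing kernel Hilbert space of the product kernel $K_1(z_1,w_1)\cdots K_n(z_n,w_n)$. Under this identification the coordinate multipliers $M_{z_i}$ become $I\otimes\cdots\otimes M_z\otimes\cdots\otimes I$. Taking adjoints of elementary tensors gives
\[
\tilde T_i=I_{\clh(K_1)}\otimes\cdots\otimes M_z^*\otimes\cdots\otimes I_{\clh(K_n)}=M_{z_i}^*.
\]
Hence a quotient module $\fQ$, meaning a closed subspace jointly invariant under $(M_{z_1}^*,\ldots,M_{z_n}^*)$, is exactly a jointly $(\tilde T_1,\ldots,\tilde T_n)$-invariant subspace in the sense of Setup \ref{setup}. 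I read ``doubly commuting quotient module'' as doubly commuting for the restricted tuple $(M_{z_1}^*|_{\fQ},\ldots,M_{z_n}^*|_{\fQ})$. This is the convention of Corollary \ref{hardyresultrecovery}, and it is the same as the definition used in Theorem \ref{mainthm}.

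With these identifications, Theorem \ref{mainthm} says $\fQ$ is doubly commuting if and only if $\fQ=\clq_1\otimes\cdots\otimes\clq_n$ with each $\clq_i$ an $M_z^*$-invariant closed subspace of $\clh(K_i)$, that is, a quotient module of $\clh(K_i)$. The reverse implication is the easy direction already noted in Theorem \ref{mainthm}. On $\clq_1\otimes\cdots\otimes\clq_n$ the restricted operators are $I\otimes\cdots\otimes M_z^*|_{\clq_i}\otimes\cdots\otimes I$, and operators acting in different tensor slots doubly commute.

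The main obstacle is only bookkeeping: checking that the natural unitary carrying monomials to elementary tensors intertwines $M_{z_i}$ with $\tilde T_i^*$. This holds because $\{\sqrt{a_l^{(i)}}z^l\}$ is an orthonormal basis of each factor, so products of these basis vectors form an orthonormal basis of the product-kernel space. No new analytic estimates are needed, since all the analysis was done in Proposition \ref{gerneral_RKHS}.
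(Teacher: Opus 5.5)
Your proposal is correct and matches the paper's argument: the paper also obtains Theorem \ref{resultNPS} directly from Theorem \ref{mainthm} with $\clh_i=\clh(K_i)$ and $T_i=M_z^*$, using Proposition \ref{gerneral_RKHS} for the admissibility of each $M_z^*$. Your identification with the product-kernel space is harmless but not needed, since the statement is already posed on $\clh(K_1)\otimes\cdots\otimes\clh(K_n)$.
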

\section{Concluding remarks}
This paper investigates the structure of doubly commuting invariant subspaces under certain tuple of operators on tensor product spaces. We unify existing results within a common framework and establish Theorem \ref{mainthm} in a general setting. We further demonstrate its applicability through several applications. The building blocks of the tensor product spaces and the corresponding operator tuples acting on them, considered in the applications, are drawn at a time from the following classes of function spaces and associated admissible operators: 
\begin{itemize}
\item[(I)] Hardy space $H^2(\D)$ and the operator $M_z^*$;
\item[(II)] nontrivial quotient modules $\clq_{\theta}$ and the operator $S_{\theta}$;
\item[(III)] de-Branges Rovnyak spaces $H(b)$ for nonconstant extreme points $b\in H^\infty_1(\D)$ and the operators $X_b$ or $X_b^*$;
\item[(IV)] $H(b)$ spaces for nonextreme points $b\in H^\infty_1(\D)$ such that $b/a \in H^\infty_1(\D)$, where $a$ is the Pythagorean mate of $b$ and the operators $X_b$;
\item[(V)] weighted Bergman space $A^2_\omega(\D)$ associated to admissible weight sequences $\omega$ and the operator $M_z^*$;
\item [(VI)] reproducing kernel Hilbert spaces $\clh(K)$ corresponding to complete Nevanlinna-Pick kernels $K$ on $\D$ and the operator $M_z^*$. 
\end{itemize}
One may also consider mixed tensor product spaces and the corresponding mixed operator tuples, whose building blocks may be chosen from any combination of the classes listed above. One can then also characterize doubly commuting invariant subspaces in hereditary form. More precisely, we state the following.
\begin{theorem}
   Let the pair of Hilbert spaces and associated operators $(\clh_i,T_i)$, for $i=1,\ldots,n$, be chosen from any combination of the classes (I)-(VI), listed above. Let $\fM$ be a jointly invariant subspace of $\fH$ under $(\tilde T_1, \ldots, \tilde T_n)$ defined as in Setup \ref{setup}.
     Then $\fM$ is doubly commuting if and only if $\fM$ is hereditary.
\end{theorem}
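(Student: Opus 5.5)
This statement is a direct consequence of Theorem \ref{mainthm}. The plan is to check that every building block $(\clh_i,T_i)$ taken from the classes (I)--(VI) consists of an admissible operator $T_i$ on $\clh_i$. Once that is known, Theorem \ref{mainthm} applies verbatim to the mixed tuple $(\tilde T_1,\ldots,\tilde T_n)$ on $\fH=\clh_1\otimes\cdots\otimes\clh_n$. This works because the proof of Theorem \ref{mainthm} uses admissibility only separately for each coordinate: it uses Proposition \ref{admtostrng} for $T_1$ and Theorem \ref{reducing result} for $T_2,\ldots,T_n$. It never requires the factors to be of the same type, and the reduction to $\clh_2\otimes\cdots\otimes\clh_n$ in the induction step again involves only admissible operators.

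\textbf{Admissibility of each class.}

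(I) Example \ref{M_z*-admissible} shows that $M_z^*$ on $H^2(\D)$ is admissible.

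(II) For a nonconstant inner $\theta$ we have $\clq_\theta=H(\theta)$, where $\theta$ is a nonconstant extreme point. By the Proposition of Subsection \ref{Hb_extreme}, $X_\theta^*=S_\theta$ is admissible.

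(III) The same Proposition shows that both $X_b$ and $X_b^*$ are admissible for nonconstant extreme points $b$.

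(IV) Theorem \ref{non-extre adm} shows that $X_b$ is admissible whenever $b/a\in H^\infty_1(\D)$.

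(V) The Proposition in Subsection \ref{weighted bergman} shows that $M_z^*$ on $A^2_\omega(\D)$ is admissible for an admissible weight sequence $\omega$.

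(VI) Proposition \ref{gerneral_RKHS} shows that $M_z^*$ on $\clh(K)$ is admissible for an admissible complete Nevanlinna--Pick kernel $K$.

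Here classes (V) and (VI) are read under the standing admissibility hypotheses on $\omega$ and $K$ from Section \ref{CNP spaces}.

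\textbf{Conclusion.} With every $T_i$ admissible, Theorem \ref{mainthm} shows that a jointly $(\tilde T_1,\ldots,\tilde T_n)$-invariant $\fM$ is doubly commuting if and only if $\fM=\clm_1\otimes\cdots\otimes\clm_n$ with each $\clm_i$ being $T_i$-invariant, that is, if and only if $\fM$ is hereditary. The converse direction (hereditary implies doubly commuting) is the trivial one already noted after Setup \ref{setup}.

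The only point that needs care is consistency of conventions: in each class one must use exactly the operator that was proved admissible. For instance, class (II) uses $S_\theta=X_\theta^*$, and class (IV) uses $X_b$ and not $X_b^*$, since the latter need not be admissible. With that bookkeeping done, the proof involves no further obstacle.
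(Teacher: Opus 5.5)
Your proposal is correct and follows the paper's argument: the paper observes that each operator in classes (I)--(VI) is admissible and then invokes Theorem \ref{mainthm}, and your class-by-class citations (Example \ref{M_z*-admissible}, the extreme-point Proposition of Subsection \ref{Hb_extreme}, Theorem \ref{non-extre adm}, the Proposition of Subsection \ref{weighted bergman}, and Proposition \ref{gerneral_RKHS}) simply spell out that step. Your extra justification that mixing classes is harmless is not needed, since Theorem \ref{mainthm} is already stated for arbitrary admissible $T_i$ on arbitrary (separable) $\clh_i$.
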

Note that, proof of the above theorem follows immediately from Theorem \ref{mainthm}, since the operators $T_i\in\clb(\clh_i)$, considered in the above theorem, are admissible operators, for all $i=1,\ldots,n$.

We conclude this article with the following question that arises naturally from our analysis in light of Theorem \ref{mainthm} and Proposition \ref{dctosi}.
\begin{question}
   Let $\clh_i, T_i, \tilde T_i,$ for $i=1, \ldots, n$ and $\fH$ be as in Setup \ref{setup}. Assume that each doubly commuting invariant subspace $\mathfrak{M}$ of $\mathfrak{H}$ under $(\tilde T_1, \ldots, \tilde T_n)$ is hereditary. Then, is $T_1$ fully irreducible and is each $T_i$ admissible for $i=2, \ldots, n$?
\end{question}

\vspace{0.1in} \noindent\textbf{Acknowledgement:} The research of the first-named author is supported by DST-INSPIRE Faculty Fellowship No. DST/INSPIRE/Faculty/2023/IFA-23-MA-201. The second-named author's research is supported by NBHM Postdoctoral Fellowship No. 0204/37(2)/NBHM/R\\\&D-II/16542. The third-named author's research is supported by NBHM Postdoctoral Fellowship No. 0204/27/(26)/2023/R\&D- II/11926.

\end{document}